\DeclareMathAlphabet{\mathpzc}{OT1}{pzc}{m}{it}
\DeclareMathAlphabet{\mathpzc}{OT1}{pzc}{m}{it}
\newtheorem{maintheorem}{Main Theorem}
\newtheorem{theorem}{Theorem}[section]
\newtheorem{lemma}[theorem]{Lemma}
\newtheorem{proposition}[theorem]{Proposition}
\newtheorem{corollary}[theorem]{Corollary}
\newtheorem{observation}[theorem]{Observation}
\newtheorem{fact}[theorem]{Fact}
\theoremstyle{definition}
\newtheorem{definition}[theorem]{Definition}
\newtheorem{example}[theorem]{Example}
\theoremstyle{remark}
\newtheorem{question}{Question}
\begin{document}

\title{The Cicho\'n Diagram for Degrees of Relative Constructibility}

\author[Switzer]{Corey Bacal Switzer}

\address[C.~Switzer]{Mathematics, The Graduate Center of The City University of New York, 365 Fifth Avenue, New York, NY 10016}

\email{cswitzer@gradcenter.cuny.edu}

\date{}

\maketitle

\begin{abstract}
\noindent Following a line of research initiated in \cite{BBNN}, I describe a general framework for turning reduction concepts of relative computability into diagrams forming an analogy with the Cicho\'n diagram for cardinal characteristics of the continuum. I show that working from relatively modest assumptions about a notion of reduction, one can construct a robust version of such a diagram. As an application, I define and investigate the Cicho\'n Diagram for degrees of constructibility relative to a fixed inner model $W$. Many analogies hold with the classical theory as well as some surprising differences. Along the way I introduce a new axiom stating, roughly, that the constructibility diagram is as complex as possible\footnote{This reasearch was supported by a CUNY mathematics fellowship and the author would like to thank the mathematics department at the Graduate Center at CUNY for this. The author would also like to express his gratitude to Professor Joel David Hamkins for his patient, enthusiastic and thoughtful help and encouragement as well as Professors Gunter Fuchs and Alfred Dolich for sitting on the author's oral exam committee where a version of this material was originally presented, Professor J\"org Brendle for very helpful comments on an earlier draft, and the anonymous referee for drawing his attention to \cite{Kihara17} and \cite{GreenbergTuretskyKuyper}.

MSC2010 Classification: 03E35, 28A05, 03E65}.

 \end{abstract}

\section{Introduction}

Building off of an original idea of Rupprecht, \cite{Rupprecht}, in \cite{BBNN} an analogue of the Cicho\'n diagram was developed for highness properties of Turing degrees. This idea has blossomed into a growing field of ``effective cardinal characteristics", see \cite{Kihara17} and the survey \cite{GreenbergTuretskyKuyper}. The framework set up in \cite{BBNN} is very flexible and can be used to produce a wide variety of Cicho\'n Diagrams for various reductions related to various notions of computability, see \cite{Kihara17} and Section 5 of \cite{GreenbergTuretskyKuyper}. Expanding upon this more general viewpoint, I show in this article that such diagrams exist for many of the standard reduction concepts on the reals. In each case I obtain an analogue of (a large fragment of) the Cicho\'n diagram. As an example, I show how such a diagram can be constructed and studied for degrees of constructibility relative to some inner model $W$ alongside the corresponding reduction $\leq_W$.
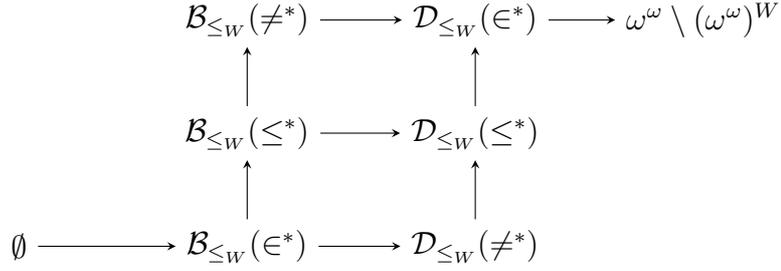
\begin{figure}[h]\label{Figure.Cichon-basic}
\centering
  \begin{tikzpicture}[scale=1.5,xscale=2]
     \draw (0,0) node (empty) {$\emptyset$}
           (1,0) node (Bin*) {$\mathcal B_{\leq_W}(\in^*)$}
           (1,1) node (Bleq*) {$\mathcal B_{\leq_W}(\leq^*)$}
           (1,2) node (Bneq*) {$\mathcal B_{\leq_W}(\neq^*)$}
           (2,0) node (Dneq*) {$\mathcal D_{\leq_W}(\neq^*)$}
           (2,1) node (Dleq*) {$\mathcal D_{\leq_W}(\leq^*)$}
           (2,2) node (Din*) {$\mathcal D_{\leq_W}(\in^*)$}
           (3,2) node (all) {$\omega^\omega\setminus(\omega^\omega)^W$}
           ;
     \draw[->,>=stealth]
            (empty) edge (Bin*)
            (Bin*) edge (Bleq*)
            (Bleq*) edge (Bneq*)
            (Bin*) edge (Dneq*)
            (Bleq*) edge (Dleq*)
            (Bneq*) edge (Din*)
            (Dneq*) edge (Dleq*)
            (Dleq*) edge (Din*)
            (Din*) edge (all)
            ;
  \end{tikzpicture}
  \caption{The Cicho\'n diagram for $\leq_W$}
\end{figure}

Let $W$ be a transitive inner model of ZFC. Recall that for reals $x$ and $y$ (in $V$) the relation $x \leq_W y$ is defined by $x \in W[y]$. The following theorem, which I formalize in $\mathsf{GBC}$, is the first of two main theorems I prove. 
\begin{maintheorem}
For any transitive inner model $W$ of ZFC, the inclusions shown in the Cicho\'n diagram for $\leq_W$ (Figure 1) all hold. Furthermore, the diagram is complete in the sense that there is a forcing extension showing that no other implications are necessarily true.
\end{maintheorem}
In fact I show more: for a wide variety of computability-like notions alongside their corresponding reduction concepts one can construct a Cicho\'n diagram similar to the one pictured above. In the case of $\leq_W$ the theory and corresponding diagram are robust in that they interact very well with regards to the familiar forcings to add reals that one studies for the classical Cicho\'n diagram. Moreover, in many cases the constructibility perspective makes clear distinctions in the diagram that are lost when only considering the cardinals. Indeed several well known forcing notions naturally split the diagram into several pieces whereas its iteration may only produce two cardinals. Finally I show that by a simple forcing over the inner model $W$ the diagram can be saturated in the sense that all possible separations can be realized simultaneously and this can be done in such a way that is indestructible with respect to further forcing. This is the second main theorem of this paper.
\begin{maintheorem}
There is a model of ZFC realizing simultaneously all possible separations between nodes of the Cicho\'n diagram for $\leq_W$. Moreover, no further forcing over this model can destroy this property.
\end{maintheorem}

I dub the statement that \say{all possible separations of the $\leq_W$ Cicho\'n diagram are realized}, $CD(\leq_W)$. The paper finishes by briefly treating $CD(\leq_W)$ as an axiom.

The article \cite{Kihara17} is an important predecessor to this work and contains many related ideas. In that work, the author considers develops a very fine analysis of various notions from arboreal forcing theory that translates well to the context of Spector pointclasses. As a result, the same connections found in \cite{BBNN} between cardinal characteristics and Turing degrees are explored at the level of hyperarithmetic degrees (and beyond) and further analogues between higher randomness and set theory of the reals are considered. The main results show that at these levels of definability the Cicho\'n diagram can look different than in either the $\leq_T$ context or the set theory context. The essential difference between the current paper and \cite{Kihara17} is that instead of importing ideas from set theory to the computability side, here I go the other way. In particular my focus on degrees of relative constructibility implies a much coarser notion of definability which is inherently more set theoretic than hyperarithmetic degrees. Consequently the results line up more closely with those from the classical theory of set theory of the reals. However the new perspective offers a more fine-grained analysis of which types of reals are added by which forcing notions, thus providing a different look at the Cicho\'n digram. This is elaborated on in detail in Sections 3 and 4. While many of these results are well known, the new perspective ilucidates further the use on the set theory side of the computational properties of single reals, a central idea in both finite and higher computability. In some cases moreover, particularly for the $\mathbb{LOC}$ forcing, the results presented here appear to be new.


\section{Generalized Cicho\'n Diagrams for Reductions}
In this section I expand on the general viewpoint of reduction concepts as giving rise to Cicho\'n diagrams. A similar idea is explored in Section 5 of \cite{GreenbergTuretskyKuyper} and in \cite{Kihara17}\footnote{Thanks to the anonymous referee for pointing this out to me.}. In contrast with those papers though I work more on the level building analogues of the Cicho\'n diagram than in considering relations between various types of reducibilities in higher computability theory and Tukey reductions. In particular, Theorem \ref{propdi}, which in slightly less general contexts is essentially folklore, shows that for any ``reasonable" reduction concept, a corresponding Cicho\'n diagram exists. 

Underlying the construction of Cicho\'n diagrams for reduction concepts is a certain perspective on cardinal characteristics of the continuum. To describe this perspective better, let us think of cardinal characteristics of the continuum in terms of small and large sets relative to some relation giving this notion of smallness and largeness. For example, recall that the binary relation $\leq^*$ is defined on $\omega^\omega$ as $f \leq^* g$ if and only if for all but finitely many $n \in \omega$, $f(n) \leq g(n)$. A family reals $A$ is ($\leq^*)$ -{\em unbounded} if for all $f \in \omega^\omega$ there is some $g \in A$ such that $g \nleq^* f$. The smallest cardinality of an unbounded family is called the unbounding number, denoted $\mathfrak b = \mathfrak b(\leq^*)$. Dually, a family of reals $A \subseteq \omega^\omega$ is ($\leq^*$) -{\em dominating} if for all $g \in \omega^\omega$ there is a $f \in A$ such that $g \leq^* f$. The least size of a dominating family is called the dominating number, denoted $\mathfrak{d} = \mathfrak{d}(\leq^*)$. Intuitively one thinks of bounded families as being \say{small} and dominating families as being \say{big}. Thus, heuristically one might think of $\mathfrak{b}$ as the least size of a set that's not \say{small} and $\mathfrak{d}$ as the least size of a set that's \say{big}. To obtain an analogy in the computable world, the authors of \cite{BBNN} define $\mathcal B (\leq^*)$ as the set of oracles computing a function $f$ such that $g \leq^* f$ for each computable function $g$ and $\mathcal D (\leq^*)$ as the set of oracles computing a function $f$ such that $f \nleq^* g$ for all computable $g$. In other words $\mathcal B(\leq^*)$ is the set of oracles which can compute a witness to the fact that the computable functions are \say{small} and $\mathcal D(\leq^*)$ is the set of oracles which can compute a witness to the fact that the computable functions are not \say{big}. Moreover, these sets turn out to correspond to \say{highness} properties of Turing degrees that are well studied in computability theory. Specifically, by a theorem of Martin (cf \cite[pp. 3]{BBNN}), $\mathcal B(\leq^*)$ is the set of high degrees and, by definition, $\mathcal D(\leq^*)$ is the set of hyperimmune degrees.

This formalism has nothing to do with {\em Turing} computability per se. This motivates the following general definition.

\begin{definition}
A {\em reduction concept} is a triple $(X, \sqsubseteq, x_0)$ where $X$ is a nonempty set, $x_0 \in X$ is some distinguished element and $\sqsubseteq$ is a partial pre-order on $X$. If $X$ is given or implicit, we also say that the pair $(\sqsubseteq , x_0)$ is a {\em reduction concept on $X$}. If $(X, \sqsubseteq, x_0)$ is a reduction concept, then for $x, y \in X$ say that $x$ is $\sqsubseteq$-{\em reducible to} $y$ if $x \sqsubseteq y$ and say that $x$ is $\sqsubseteq$-{\em basic} if it is $\sqsubseteq$-reducible to $x_0$. 

Let $(\sqsubseteq, x_0)$ be a reduction concept on $X$ and $R \subseteq X \times X$ be a binary relation. Let $\sqsubseteq \upharpoonright x_0 = \{y\in X \; | \; y \sqsubseteq x_0\}$ be the basic reals. Then define the {\em bounding set} for $R$ as 

\[ \mathcal B_\sqsubseteq (R) = \{x \in X \; | \; \exists y \sqsubseteq x \; \forall z \in \sqsubseteq \upharpoonright x_0 \; [zRy]\}\]

\noindent and the {\em non-dominating set} for $R$ as 

\[ \mathcal D_\sqsubseteq (R) = \{x \in X \; | \; \exists y \sqsubseteq x \; \forall z \in \sqsubseteq \upharpoonright x_0 \; [\neg yRz]\}.\] 
\end{definition}
Roughly, if $\sqsubseteq$ is some sort of relative computability relation, then $\mathcal B_\sqsubseteq (R)$ is the set of elements of $x \in X$ which compute an $R$-bound on the computable elements of $X$ and $\mathcal D_\sqsubseteq(R)$ is the set of $x \in X$ which compute an element which is not $R$-dominated by the set of all computable elements. If $R$ is a relation giving a notion of \say{small} and \say{big} sets as described above one can think of $\mathcal B_\sqsubseteq (R)$ as the set of elements computing a witness to the fact that the $\sqsubseteq$-basic sets are small and $\mathcal D_\sqsubseteq (R)$ as the set of elements computing a witness to the fact that the $\sqsubseteq$-basic elements are not big.

\begin{example}[\cite{BBNN}]
Let $x_0 \in \omega^\omega$ be some computable real, say the constant function at $x_0$. Then the pair $(\leq_T, x_0)$ forms a reduction concept on the reals. The basic reals are the computable reals. For any binary relation $R$ on the reals $\mathcal B_{\leq_T}(R)$ is the set of Turing degrees computing an element of $X$ which $R$-bounds all the computable sets. Similarly $\mathcal D_{\leq_T} (R)$ is the set of Turing degrees computing an element of $X$ which is not $R$-dominated by any computable set.
\end{example}

The next example will be the central focus of the rest of this article.
\begin{example}
Let $x_0 \in \omega^\omega$ be constructible. Then the pair $(\leq_L, x_0)$ is a reduction concept on $\omega^\omega$ where $x \leq_L y$ if $x \in L[y]$. The basic reals are the constructible reals. More generally, fix some inner model $W \subseteq V$ and let $\leq_W$ be constructibility relative to $W$. Then if $0 \in (\omega^\omega)^W$ is any given real in $W$ the pair $(\leq_W, x_0)$ forms a reduction concept on Baire space and the basic reals are those of $W$. Since this is the main case let me explicit what the bounding and non-dominating sets are. Let $R$ be a relation on the reals of $V$. The set $\mathcal B_{\leq_W} (R)$ consists of all reals $x$ in $V$ such that in $W[x]$ there is an $R$-bound on the reals of $W$. Similarly the set $\mathcal D_{\leq_W} (R)$ consists of all reals $x$ in $V$ such that in $W[x]$ there is a real which is not $R$-bounded by any real in $W$. For example, $\mathcal B_{\leq_W}(\leq^*)$ is the set of dominating reals over $W$ in $V$ and $\mathcal D_{\leq_W}(\leq^*)$ is the set of unbounded reals over $W$ in $V$.
\end{example}
I will come back to this example in the next section. First, let me give some more examples of reduction concepts on the reals, though I will not treat them in detail in this article.


\begin{example}
Recall that the relation of many-one polytime reduction, $\leq_m^p$ is defined by $x \leq_m^p y$ if and only if there is a function $f$ which is computable in polynomial time such that $n \in x$ if and only if $f(n) \in y$. The pair $(\leq_M^p, \emptyset)$ is a reduction concept on $\mathcal P(\mathbb N)$.
\end{example}

\begin{example}
Let $\kappa > \omega$ be an uncountable cardinal. Recently there has been much work in the descriptive set theory of \say{generalized} Baire and Cantor spaces, $\kappa^\kappa$ and $2^\kappa$, including various generalizations of cardinal characteristics of the continuum. The same can be done in my framework for degrees of constructibility. For instance notions of eventual domination, etc all make sense in the general context of $\kappa^\kappa$ and corresponding bounding and non-dominating sets can be constructed over the basic elements, $(\kappa^\kappa)^L$.
\end{example}

The framework described above is flexible enough that $(X, \sqsubseteq, x_0)$ need not be some actual notion of computability on the reals nor have an explicit relation to cardinal characteristics of the continuum. For instance one might consider a class of models of a fixed theory in a fixed language with embeddibility. In this case, depending on the relations $R$ one studied, one would arrive at a diagram corresponding to when models with certain properties embed into one another. There are many possibilities, each giving a potentially interesting diagram of inclusions between the various bounding and non-dominating sets for an appropriate collection of relations. In future work I hope to explore all of these more fully. 

Presently however, let me restrict my attention to the types of cases described in the preceding examples. To see how these examples can lead to \say{Cicho\'n diagrams} let me define some relations.

\begin{definition}[Combinatorial relations]
I consider the reals as elements of Baire space, $\omega^\omega$. Let $f, g$ be reals. Then
\begin{enumerate}
\item
 $f \neq^* g$ if there is some $k$ such that for all $l > k$ $f(l) \neq g(l)$. In this case say that $g$ is {\em eventually not equal to} $f$. Note that the negation of $\neq^*$ is {\em infinitely often equal}, not eventual equality. 
 \item
 Let $h \in \omega^\omega$ and recall that an $h$-{\em slalom} is a function $\sigma: \omega \to [\omega]^{< \omega}$ such that for all $n \in \omega$ the set $|\sigma(n)| \leq h(n)$. In the case where $h$ is the identity function call $\sigma$ simply a {\em slalom}. For a slalom $\sigma$, I write $f \in^* \sigma$ if there is some $k$ such that for all $l > k$ $f(l) \in \sigma(l)$. In this case say that $f$ {\em is eventually captured by} $\sigma$. 
 \end{enumerate}
\end{definition}

Even in this general framework I can now prove a collection of implications giving a version of the Cicho\'n diagram.
\begin{theorem}
Let $(\sqsubseteq, x_0)$ be a reduction concept on $\omega^\omega$ extending $\leq_T$ such that if $f, g \sqsubseteq h$ then $f \circ g \sqsubseteq h$ then, interpreting arrows as inclusions, the following all hold: 

\begin{figure}[h]\label{Figure.Cichon-basic}
\centering
  \begin{tikzpicture}[scale=1.5,xscale=2]
     \draw (0,0) node (empty) {$\emptyset$}
           (1,0) node (Bin*) {$\mathcal B_\sqsubseteq(\in^*)$}
           (1,1) node (Bleq*) {$\mathcal B_\sqsubseteq(\leq^*)$}
           (1,2) node (Bneq*) {$\mathcal B_\sqsubseteq(\neq^*)$}
           (2,0) node (Dneq*) {$\mathcal D_\sqsubseteq(\neq^*)$}
           (2,1) node (Dleq*) {$\mathcal D_\sqsubseteq(\leq^*)$}
           (2,2) node (Din*) {$\mathcal D_\sqsubseteq(\in^*)$}
           (3,2) node (all) {$\omega^\omega\setminus \{x \; | \; x \sqsubseteq 0\}$}
           ;
     \draw[->,>=stealth]
            (empty) edge (Bin*)
            (Bin*) edge (Bleq*)
            (Bleq*) edge (Bneq*)
            (Bleq*) edge (Dleq*)
            (Dneq*) edge (Dleq*)
            (Bin*) edge (Dneq*)
            (Bneq*) edge (Din*)
            (Dleq*) edge (Din*)
            (Din*) edge (all)
            
            ;
  \end{tikzpicture}
  \caption{A Cicho\'n diagram for an arbitrary reduction concept on Baire space}
\end{figure}
\label{propdi}
\end{theorem}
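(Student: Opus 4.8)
The plan is to verify each of the nine arrows in Figure~2 separately, grouping them into four kinds: the trivial endpoints, the ``$\mathcal B$-column'' chain, the ``$\mathcal D$-column'' chain, and the three cross arrows $\mathcal B_\sqsubseteq(\leq^*)\to\mathcal D_\sqsubseteq(\leq^*)$, $\mathcal B_\sqsubseteq(\in^*)\to\mathcal D_\sqsubseteq(\neq^*)$, and $\mathcal B_\sqsubseteq(\neq^*)\to\mathcal D_\sqsubseteq(\in^*)$. Throughout I write ``basic'' for $\sqsubseteq\!\restriction\!0$ and recall that since $\sqsubseteq$ extends $\leq_T$, any element computable from a basic element is basic, and the composition hypothesis lets me replace a witness $y$ by $\varphi\circ y$ for any fixed computable (hence basic) $\varphi$. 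The endpoints $\emptyset\subseteq\mathcal B_\sqsubseteq(\in^*)$ and $\mathcal D_\sqsubseteq(\in^*)\subseteq\omega^\omega\setminus\{x\mid x\sqsubseteq 0\}$ are immediate: the first because $\emptyset$ is a subset of everything, and the second because if $x$ were basic then the witness $y\sqsubseteq x$ would be basic, contradicting that no basic $z$ captures $y$ (take $z$ to be $y$ itself, noting $y\in^*\sigma$ where $\sigma(n)=\{y(n)\}$ is a basic slalom).

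For the vertical chains the point is that $\neq^*$ is, in the relevant direction, the strongest of the three relations and $\in^*$ the weakest, so the bounding sets grow and the dominating sets shrink as one moves up. Concretely, for $\mathcal B_\sqsubseteq(\in^*)\subseteq\mathcal B_\sqsubseteq(\leq^*)\subseteq\mathcal B_\sqsubseteq(\neq^*)$: if $y$ is captured-dominating over the basic reals, i.e. every basic $z$ satisfies $z\in^*\sigma$ for a fixed $\sigma\sqsubseteq x$, then $\hat\sigma(n)=\max\sigma(n)$ is $\sqsubseteq x$ and $\leq^*$-dominates every basic $z$; and if $y$ is $\leq^*$-dominating over the basic reals then a function growing fast enough to differ eventually from each basic real (obtained by a simple diagonal shift, e.g. $y+1$ against reals bounded by $y$, using that $\sqsubseteq$ is closed under the relevant computable operations) witnesses membership in $\mathcal B_\sqsubseteq(\neq^*)$. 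Dually, for $\mathcal D_\sqsubseteq(\neq^*)\subseteq\mathcal D_\sqsubseteq(\leq^*)\subseteq\mathcal D_\sqsubseteq(\in^*)$ one runs the contrapositive implications on the ``escaping'' witnesses: a real $y\sqsubseteq x$ that is infinitely-often-equal to no basic real (so $\neq^*$-escapes) is in particular not $\leq^*$-dominated by any basic real, and one not $\leq^*$-dominated by a basic real is not captured by any basic slalom (since a basic slalom's pointwise max is a basic dominating function).

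The three cross arrows are the genuinely combinatorial content. For $\mathcal B_\sqsubseteq(\leq^*)\subseteq\mathcal D_\sqsubseteq(\leq^*)$: given $y\sqsubseteq x$ dominating all basic reals, I build from $y$ a real $y'\sqsubseteq x$ that is not dominated by any basic real --- intuitively, if $y$ eventually outgrows every basic function then $y$ itself is already unbounded over the basics, because a basic real dominating $y$ would, together with $y$'s dominating property, force the basic reals to dominate themselves uniformly, contradicting that no single basic real dominates all basics (here one uses that the basic reals, extending the computable ones, are not $\leq^*$-bounded by any one of their members --- a standard diagonalization). The arrow $\mathcal B_\sqsubseteq(\neq^*)\subseteq\mathcal D_\sqsubseteq(\in^*)$ uses the classical ``antichain'' encoding: from a real $y\sqsubseteq x$ eventually-different from every basic real, partition $\omega$ into consecutive intervals and code $y$ into a real $y^\ast$ whose value on the $n$-th interval encodes $y\!\restriction\! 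I_n$, so that any basic slalom capturing $y^\ast$ yields (computably, hence within the basics) an eventually-different ``anti-prediction'' of $y$ with room to spare, contradicting the $\neq^*$-assumption. Similarly $\mathcal B_\sqsubseteq(\in^*)\subseteq\mathcal D_\sqsubseteq(\neq^*)$ is the dual coding: a capturing function over the basics can be thinned to produce, relative to $x$, a real that is infinitely-often-equal to no basic real. The main obstacle is getting these three codings to respect the hypothesis ``$f,g\sqsubseteq h\Rightarrow f\circ g\sqsubseteq h$'' rather than full relative computability: each decoding must be realized as composition with a fixed basic function (or a finite iteration thereof) applied to the witness produced inside $\sqsubseteq\!\restriction\!x$, so I will take care to phrase the interval-coding maps as honest compositions. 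Once that bookkeeping is in place, the arrows follow from the standard Cicho\'n-diagram combinatorics transported verbatim from the cardinal-characteristic setting, which is exactly the analogy the theorem asserts.
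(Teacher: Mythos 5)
Your overall architecture matches the paper's: the seven ``easy'' arrows are handled by pointwise manipulations (taking the max of a slalom, a $+1$ diagonalization, the max or sum of a slalom as a dominating function), and the two cross arrows $\mathcal B_\sqsubseteq(\in^*)\subseteq\mathcal D_\sqsubseteq(\neq^*)$ and $\mathcal B_\sqsubseteq(\neq^*)\subseteq\mathcal D_\sqsubseteq(\in^*)$ carry the real combinatorial weight via Bartoszy\'nski-style interval codings. Your sketch of $\mathcal B_\sqsubseteq(\neq^*)\subseteq\mathcal D_\sqsubseteq(\in^*)$ is essentially the paper's Lemma \ref{neqinf}, and your bookkeeping worry about realizing the decodings inside $\sqsubseteq$ largely dissolves because $\sqsubseteq$ extends $\leq_T$ and is transitive; the composition hypothesis is only needed when joining two witnesses built separately below $x$ (e.g.\ a basic partition together with a slalom).

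There is, however, a genuine error. Twice you describe the witness for $\mathcal D_\sqsubseteq(\neq^*)$ as a real that is ``infinitely-often-equal to \emph{no} basic real.'' Unwinding the definition, $x\in\mathcal D_\sqsubseteq(\neq^*)$ asks for $y\sqsubseteq x$ with $\neg(y\neq^* z)$ for every basic $z$, i.e.\ $y$ is infinitely often equal to \emph{every} basic real; what you wrote is the witness for $\mathcal B_\sqsubseteq(\neq^*)$. This is not a harmless slip. Under your reading, the arrow $\mathcal D_\sqsubseteq(\neq^*)\subseteq\mathcal D_\sqsubseteq(\leq^*)$ becomes the claim that an eventually different real is unbounded, which is false (random forcing adds eventually different reals over $W$ but no unbounded ones); the correct argument is that a real infinitely often equal to every basic $z$ cannot be $\leq^*$-dominated by a basic $z$, since it must meet the basic real $z+1$ infinitely often. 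Likewise your one-line treatment of $\mathcal B_\sqsubseteq(\in^*)\subseteq\mathcal D_\sqsubseteq(\neq^*)$ (``thinned to produce a real that is infinitely-often-equal to no basic real'') aims at the wrong target and omits the actual content: one must convert a slalom capturing every basic real (at least) infinitely often into a \emph{single-valued} function meeting every basic real infinitely often, and a naive pointwise selection from $\sigma(n)$ fails because different basic reals may be tracked by different members of $\sigma(n)$. The paper's Lemma \ref{infslalom} does this by partitioning $\omega$ into intervals $J_n=\bigcup_{k\leq h(n)}J_{n,k}$, passing to the induced slalom on restrictions $f\upharpoonright J_n$, and letting $g$ copy the $k$-th guess $w^n_k$ on the $k$-th subinterval $J_{n,k}$, so that any correct guess for $f\upharpoonright J_n$ forces an agreement of $g$ with $f$ inside $J_n$. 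You need to state the correct target and supply this merging argument for those two arrows to go through.
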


\begin{proof}
Note that slaloms can be computably coded by reals so, since the relation $\sqsubseteq$ extends Turing computability the $\in^*$ can be seen as a relation on the reals. I drop the $\sqsubseteq$ subscript for readability. Also, I'll write \say{basic} for $\sqsubseteq$-basic and if $y \sqsubseteq x$ then I'll say that \say{$x$ builds $y$}. The requirement that $\sqsubseteq$ be closed downwards under compositions will be used implicitly throughout the argument where I will show that a function can build two other functions hence it can build their composition.

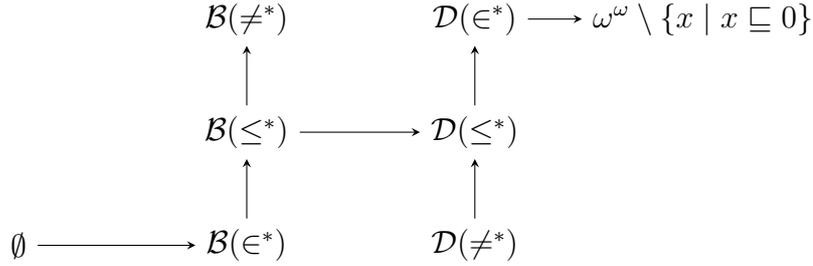
\begin{figure}[h]\label{Figure.Cichon-basic}
\centering
  \begin{tikzpicture}[scale=1.5,xscale=2]
     \draw (0,0) node (empty) {$\emptyset$}
           (1,0) node (Bin*) {$\mathcal B(\in^*)$}
           (1,1) node (Bleq*) {$\mathcal B(\leq^*)$}
           (1,2) node (Bneq*) {$\mathcal B(\neq^*)$}
           (2,0) node (Dneq*) {$\mathcal D(\neq^*)$}
           (2,1) node (Dleq*) {$\mathcal D(\leq^*)$}
           (2,2) node (Din*) {$\mathcal D(\in^*)$}
           (3,2) node (all) {$\omega^\omega \setminus \{x \; | \; x \sqsubseteq 0\}$}
           ;
     \draw[->,>=stealth]
            (empty) edge (Bin*)
            (Bin*) edge (Bleq*)
            (Bleq*) edge (Bneq*)
            (Bleq*) edge (Dleq*)
            (Dneq*) edge (Dleq*)
            (Dleq*) edge (Din*)
            (Din*) edge (all)
            ;
  \end{tikzpicture}
  \caption{The easy cases}
\end{figure}

All but two cases are essentially immediate from the definitions. These easy cases are pictured in Figure 3.For example, consider $\mathcal B(\in^*) \subseteq \mathcal B (\leq^*)$. This says that every $x$ building a slalom eventually capturing all the basic reals builds a real which eventually dominates all basic reals. This is proved as follows. Suppose $x \in \mathcal B(\in^*)$ and let $\sigma \sqsubseteq x$ be a slalom witnessing this. Then, define $z(n) = {\rm max} \, (\sigma (n)) +1$. Notice that $z \leq_T \sigma$ so $z \sqsubseteq \sigma$ and hence $z \sqsubseteq x$. Moreover, since $\sigma$ eventually captures all basic reals, $z$ must eventually dominate them all so $x \in \mathcal B (\leq^*)$. The other easy cases are similarly shown.

The two more substantive inclusions are $\mathcal B(\in^*) \subseteq\mathcal D(\neq^*)$ and $\mathcal B(\neq^*) \subseteq \mathcal D(\in^*)$. Let's start with $\mathcal B(\in^*) \subseteq \mathcal D(\neq^*)$. Substantively this states that if a real $x$ builds a slalom eventually capturing all basic functions then $x$ also builds a real which is infinitely-often-equal to all basic functions. In fact I will show a more general claim that implies this. The following lemma and proof is essentially a reinterpretation of Theorem 1.5 from \cite{Bar1987}.

\begin{lemma}
For any real $x$ the following are equivalent. 
\begin{enumerate}
\item
There is a real $g \sqsubseteq x$ such that for all basic $f \in \omega^\omega$, there exist infinitely many $n \in \omega$ such that $g(n) = f(n)$
\item
There is a basic $h \in \omega^\omega$ and an $h$-slalom $\sigma \sqsubseteq x$ such that for all basic $f \in \omega^\omega$ there are infinitely many $n \in \omega$ such that $f(n) \in \sigma(n)$.
\end{enumerate}
Moreover, given an infinitely-often-equal real as in $1)$, one can build from it an $h$-slalom as in $2)$ and given an $h$-slalom $\sigma$ as in $2)$ one can build an infinitely-often-equal real as in $1)$. Thus, $x \in \mathcal D(\neq^*)$ if and only if there is a basic $h \in \omega^\omega$ and an $h$-slalom which captures each of the basic reals infinitely often.
\label{infslalom}
\end{lemma}
Before proving Lemma \ref{infslalom}, notice that it implies the inclusion $\mathcal B(\in^*) \subseteq \mathcal D(\neq^*)$ since any slalom which captures every basic real cofinitely often must in particular capture each basic real infinitely often so if $x \in \mathcal B(\in^*)$ builds such a slalom, by the lemma $x$ must be able to build an infinitely-often-equal real as well.

\begin{proof}[Proof of Lemma \ref{infslalom}]
The forward direction is obvious: suppose that $g$ is an infinitely-often-equal real. Then clearly the $1$-slalom $\phi:\omega \to [\omega]^1$ such that $\phi(n) = \{g(n)\}$ is $\leq_T$-computable from $g$ and hence $\sqsubseteq$-reducible to $g$, thus giving the desired $h$-slalom.

For the backward direction fix a basic real $h$ such that there exists an $h$-slalom as in the statement of 2. I need to find a real $g$ which is infinitely often equal to every basic real. In a basic fashion, fix a family of finite, nonempty, pairwise disjoint subsets of $\omega$ enumerated $\{J_{n, k} \; | \; n < \omega \; \& \; k \leq h(n)\}$ which collectively cover $\omega$. Since $h$ is assumed to be basic there is no problem building such a partition, for example one could use singletons. Label $J_n = \bigcup_{k \leq h(n)} J_{n, k}$. Then for each basic $f \in \omega^\omega$ let $f':\omega \to \omega^{< \omega}$ be the function defined by $f'(n) = f\upharpoonright J_n$. More generally let $\mathcal J = \{f: \omega \to \omega^{<\omega} \; | \; {\rm dom} ( f(n) ) = J_n\}$. Notice that the basic elements of $\mathcal J$ are exactly $\{f' \;|\; f \in \omega^\omega \; \& \; f \sqsubseteq 0\}$ since from any $f'$ we can build $f$ and vice versa (by the the fact that the $J_n$'s are basic). But now since the $f'$'s are basic and each one codes a real one can by applying 2 plus some simple coding to find an $h$-slalom, $\sigma: \omega \to (\omega^{< \omega})^{< \omega}$ such that for every $n \in \omega$ $|\sigma(n)| \leq h(n)$ and $\sigma(n)$ is a set of finite partial functions from $J_n$ to $\omega$ and for every basic $f' \in \mathcal J$ there are infinitely many $n\in \omega$ such that $f'(n) \in \sigma(n)$. 

Let me denote $\sigma(n) = \{w^n_1,...,w^n_{h(n)}\}$. Now set $g_n = \bigcup_{k \leq h(n)} w^n_k \upharpoonright J_{n, k}$ and let $g = \bigcup_{n < \omega} g_n$. Notice that this gives an element of $\omega^\omega$ since the $J_{n, k}$'s were disjoint and collectively covered $\omega$. I claim that $g$ is as needed. Clearly $g$ is reducible to the $J_{n, k}$'s, which are basic, and the $w^n_k$'s, which are reducible to $\sigma$ so $g$ is reducible to $\sigma$. It remains to see that it is an infinitely-often-equal real. To see this, let $f \in \omega^\omega$ be basic and fix some $n$ such that $f'(n) \in \phi(n)$ (recall that there are infinitely many such $n$). Notice that since $f'(n) \in \phi(n)$ there must be some $k \leq h(n)$ such that $f\upharpoonright J_m = w^n_k$. Now let $x_n \in J_{n, k}$ (recall that this set is assumed to be non-empty). We have that $f(x_n) = w^n_k(x_n) = g(x_n)$. But there are infinitely many such $n$ and hence infinitely many such $x_n$ so this completes the proof.
\end{proof}

A similar proof produces the last inclusion, $\mathcal B(\neq^*) \subseteq \mathcal D(\in^*)$. In words this inclusion states that any real which can build a real which is eventually different from all basic reals can build a real which is not eventually captured by any given slalom. I will prove the following more general lemma, whose statement and proof is inspired by \cite{Bar1987}, Theorem 2.2. Given an $h$-slalom $\sigma$ and a function $f$ let me say that $f$ is {\em eventually never captured by} $\sigma$ if there is some $k$ such that for all $l > k$ $f(l) \notin \sigma (l)$.
\begin{lemma}
For any real $f$, the following are equivalent.
\begin{enumerate}
\item
The real $f$ is eventually different from all basic reals.
\item
The real $f$ is such that for all basic reals $h$ and all basic $h$-slaloms $\sigma$ for all but finitely many $n \in \omega$ $f(n) \notin \sigma (n)$.
\end{enumerate}
Therefore $x \in \mathcal B(\neq^*)$ if and only if $x$ builds a real which is eventually never captured by any basic $h$-slalom for any basic $h$.
\label{neqinf}
\end{lemma}
Let me note before I prove Lemma \ref{neqinf} that it proves the inclusion $\mathcal B(\neq^*) \subseteq \mathcal D(\in^*)$ and hence Theorem \ref{propdi}. To see why, suppose that $x \in \mathcal B(\neq^*)$ and, without loss of generality suppose that $x$ itself is a real which is eventually different from all basic reals. Then by the lemma $x$ is eventually never captured by any basic slalom so, in particular for infinitely many $n$ $x(n) \notin \sigma (n)$ for all basic $\sigma$, which means $x \in \mathcal D(\in^*)$.

\begin{proof}[Proof of Lemma \ref{neqinf}]
Fix some $f \in \omega^\omega$. The backward direction of this lemma is easy: if $f$ is eventually never captured by any basic $h$-slalom for any basic $h$ then in particular it is eventually never captured by the slalom sending $n \mapsto \{g(n)\}$ for each basic $g$ and hence it is eventually different from each basic $g$.

For the forward direction, assume $f$ is eventually different from all basic functions. Fix a basic $h$ and, like in the proof of Lemma \ref{infslalom}, in a basic fashion partition $\omega$ into finite, disjoint, non-empty sets $\{J_{n, k} \; | \; k \leq h(n)\}$. Let $J_n = \bigcup_{k \leq h(n)} J_{n, k}$. Let $f':\omega \to \omega^{< \omega}$ be the function defined by $f'(n) = f\upharpoonright J_n$. Then if $\sigma$ is any basic $h$-slalom, let $\sigma '$ be such that on input $n$ gives $h(n)$ many finite partial fuctions $w^n_1,...,w^n_{h(n)}$ with domain $J_n$ where for all $k \leq h(n)$ and $l \in J_n$ $w^n_k(l)$ is the $k^{\rm th}$ greatest number in the set $\sigma (l)$ if such exists and $0$ (say) otherwise. Suppose now towards a contradiction that there is a basic $h$-slalom $\sigma$ such that  $f(n) \in \sigma (n)$ for infinitely many $n$. For each $n$ let $\sigma'(n) = \{w^n_1,...,w^n_{h(n)}\}$. Then define $g_n = \bigcup_{k \leq h(n)} w^n_k \upharpoonright J_{n, k}$ and let $g = \bigcup_{n < \omega} g_n$. Clearly $g$ can built using $\sigma$, the function $h$ and the $J_{n, k}$'s each of which is basic so $g$ is basic. Thus there is a $k$ such that for all $n > k$ we have that $f(n) \neq g(n)$. But, since there are infinitely many $n$ such that $f(n) \in \sigma (n)$, there are infinitely many $n > k$ such that $f(n) \in \sigma (n)$ and therefore it follows that similarly we must have that there are infinitely many $n > k$ such that $f' (n)$ agrees with some $w^n_j$ on some element of their shared domain for some $j \leq h(n)$. But this means $f(k) = g(k)$ for some $k \in J_{n, j}$ for infinitely many $n$'s and $j$'s which is a contradiction.
\end{proof}

\noindent Since this was the final inclusion to prove, Theorem \ref{propdi} is now proved as well.
\end{proof}

Thus, even in this broad context one can construct diagrams for a wide variety of reduction concepts and a correspondence starts to form with the Cicho\'n diagram. This extends the proof given in the case of Turing degrees in \cite{BBNN} and gives a good framework for investigations into various computability reduction concepts. What it does not show, however, is that any of these nodes are non-empty or that the inclusions are strict. Indeed this is not necessarily the case. For instance $\mathcal B_{\leq_T}(\in^*) = \mathcal B_{\leq_T}(\neq^*)$ (see \cite{BBNN}). This is because, by a theorem of Rupprecht, \cite{Rupprecht}, the set $\mathcal B_{\leq_T} (\in^*)$ is simply the high reals, which as I mentioned above is also $\mathcal B (\leq^*)$. The analogue of this fact in the case of the classical Cicho\'n diagram is false since ${\rm add}(\mathcal N)$, the analogue of $\mathcal B_{\leq_T} (\in^*)$, can consistently be less than $\mathfrak{b}$, the analogue of $\mathcal B(\leq^*)$. The authors of \cite{BBNN} take this as evidence that the $\leq_T$-Cicho\'n diagram provides \say{only an analogy, not a full duality} \cite[p. 3]{BBNN} with the classical Cicho\'n diagram. Theorem \ref{propdi} proves the existence of a wide variety of such diagrams, therefore raising the question in each case of how strong the analogy between the reduction diagram and the classical diagram is, and whether we ever get a full duality. This depends on the strength of the reduction since, while the $\leq_T$ diagram gives only an analogy, I show in the next section that in the $\leq_W$ diagram the inclusions proved in Theorem \ref{propdi} constitute the only ones true in every model of ZFC, thereby suggesting something closer to a true duality.

\section{Separations in the $\leq_W$-Cicho\'n Diagram}
From now on fix an inner model $W$. I work in the language of set theory with an extra predicate for $W$ and the theory ZFC($W$), that is ZFC with replacement and comprehension holding for formulas containing $W$. I view $W=L$ as a central case but it turns out that the analysis works out the same for arbitrary $W$. In this section I will be primarily concerned with separations in the diagram, that is forcing to make the inclusions described above proper for the $\leq_W$ reduction concept.

Note that all of the implications discussed in the previous section hold when in the case of the $\leq_W$ reduction concept. For reference, let me state clearly what the unbounding and dominating sets are for the combinatorial relations defined in the last section for $\leq_W$.
\begin{enumerate}
\item
	$\mathcal B(\in^*)$ is the set of reals $x$ such that there is a slalom $\sigma \in W[x]$ that eventually captures all reals in $W$.
\item
	 $\mathcal B(\leq^*)$ is the set of reals $x$ such that there is a real $y \in W[x]$ that eventually dominates all reals in $W$. Such $y$ are called {\em dominating reals} (for $W$).
\item
	$\mathcal B(\neq^*)$ is the set of reals $x$ such that there is a real $y \in W[x]$ that is eventually different from all reals in $W$. Such $y$ are called {\em eventually different reals} (for $W$).
\item
	$\mathcal D(\in^*)$ is the set of reals $x$ such that there is a real $y \in W[x]$ that is not eventually captured by any slalom in $W$.
\item
	$\mathcal D(\leq^*)$ is the set of reals $x$ such that there is a real $y \in W[x]$ that is not eventually dominated by any real in $W$. Such $y$ are called {\em unbounded reals} (for $W$).
\item
	$\mathcal D(\neq^*)$ is the set of reals $x$ such that there is a real $y \in W[x]$ that is equal infinitely often to every real in $W$. Such $y$ are called {\em infinitely-often-equal reals} (for $W$).
	\end{enumerate}

In this section I will study how a variety of known forcing notions over $W$ can create separations in the $\leq_W$-Cicho\'n diagram as described in the previous section. Of course ZFC($W$) cannot prove any separations since if $V=W$ or, more generally $V$ and $W$ have the same reals, every node in the $\leq_W$-diagram will be empty. However, using simple forcings I will show that one can produce a wide variety of possible constellations for the $\leq_W$-diagram. While in most of these cases the results are well known the perspective is sufficiently different that I feel it's worth observing them in this light. In most cases I simply cite the relevant references, however, for Hechler, eventually different forcing and localization forcing the relevant theorem appears to be either new or at least never written down in this way. In particular the section on $\mathbb{LOC}$ contains new results. The main theorem of this section is the following.

\begin{theorem}
The Cicho\'n diagram for $\leq_W$ as described in the previous section is complete for ZFC($W$)-provable implications. In other words if $A$ and $B$ are two nodes in the diagram and there is not an arrow from $A$ to $B$ in the $\leq_W$-diagram then there is a forcing extension of $W$ where $A \nsubseteq B$. Moreover, every single cut, that is two valued split, can be realized by a proper forcing.
\label{mainthm2}
\end{theorem}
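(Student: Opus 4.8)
The plan is to prove each missing implication by a single forcing extension of $W$ with a classical forcing notion, exploiting two observations. First, every node $\mathcal A$ of the diagram is upward closed under $\leq_W$, and by definition ``$x\in\mathcal A$'' asserts precisely that the model $W[x]$ contains a combinatorial witness of a fixed type --- a real dominating, unbounded over, eventually different from, or infinitely often equal to the reals of $W$; a slalom in $W[x]$ capturing every real of $W$; or a real in $W[x]$ eventually escaping every slalom of $W$ --- where Lemmas \ref{infslalom} and \ref{neqinf} let one pass freely between the ``real'' and ``slalom'' formulations of $\mathcal D(\neq^*)$ and $\mathcal B(\neq^*)$. Hence, to get $\mathcal A\nsubseteq\mathcal B$ in some extension it suffices to exhibit one real $x$ such that $W[x]$ has the $\mathcal A$-witness but no $\mathcal B$-witness, and when the forcing $\mathbb P\in W$ is generated by a real $x$ (so $W[x]=W[G]$) this reduces to: $\mathbb P$ adds an $\mathcal A$-witness over $W$ but adds no $\mathcal B$-witness over $W$. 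Second, since implications compose, it is enough to separate each node from the finitely many maximal nodes not reachable from it in the diagram; running through Figure 1, this reduces the whole task to a short list of critical separations. (Under the correspondence of $\mathcal B(\in^*),\mathcal B(\leq^*),\mathcal B(\neq^*),\mathcal D(\neq^*),\mathcal D(\leq^*),\mathcal D(\in^*)$ with $\mathrm{add}(\mathcal N),\mathfrak b,\mathrm{non}(\mathcal M),\mathrm{cov}(\mathcal M),\mathfrak d,\mathrm{cof}(\mathcal N)$, these are the local avatars of the classical separations in the Cicho\'n diagram.)

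These critical separations I intend to realize using a handful of standard forcings over $W$, taking the generic real --- or a real coding the generic object --- as the witness $x$. Namely: $\mathcal B(\in^*)\ne\emptyset$ from the localization (amoeba) forcing, whose generic slalom captures every real of $W$; $\mathcal B(\leq^*)\nsubseteq\mathcal B(\in^*)$ from Hechler forcing, which adds a dominating real but no slalom localizing $(\omega^\omega)^W$; $\mathcal B(\neq^*)\nsubseteq\mathcal D(\leq^*)$ from the eventually different forcing, which is $\omega^\omega$-bounding and adds an eventually different real; $\mathcal D(\neq^*)\nsubseteq\mathcal B(\neq^*)$ and $\mathcal D(\leq^*)\nsubseteq\mathcal B(\neq^*)$ from Cohen forcing, whose generic real is infinitely often equal to and unbounded over the reals of $W$ but which adds no eventually different real; $\mathcal D(\leq^*)\nsubseteq\mathcal D(\neq^*)$ from Miller (superperfect tree) forcing, which adds an unbounded real while preserving the $\neq^*$-dominating families of $W$, so that no real of the extension is infinitely often equal to all reals of $W$; $\mathcal D(\in^*)\nsubseteq\mathcal B(\neq^*)$ and $\mathcal D(\in^*)\nsubseteq\mathcal D(\leq^*)$ from random forcing (in a product-measure representation in which the random real escapes every slalom of $W$), which is $\omega^\omega$-bounding and adds no eventually different real; and $\omega^\omega\setminus(\omega^\omega)^W\nsubseteq\mathcal D(\in^*)$ from Sacks forcing, which adds a real not in $W$ but, having the Sacks property, adds no real escaping all slaloms of $W$. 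In each case the fact that the generic real lies in the lower node $\mathcal A$ is a one-line density computation, while ``$x\notin\mathcal B$'' is exactly the quoted non-adding property of the forcing; together with the already-established implications of Theorem \ref{propdi}, this yields completeness.

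The bookkeeping --- that these separations witness every missing arrow --- is routine once the maximal non-reachable nodes are listed. The real content, and the step I expect to be the main obstacle, is the package of ``non-adding'' facts invoked: that Hechler forcing adds no slalom localizing $(\omega^\omega)^W$, that the eventually different forcing is $\omega^\omega$-bounding, that Cohen and random forcing add no eventually different real, that Miller forcing preserves $\neq^*$-dominating families, and that random forcing does add a slalom-escaping real. All of these belong to the folklore of the classical Cicho\'n diagram, but the standard sources phrase them as facts about cardinal characteristics in an iterated model, so each must be re-read in the one-step form ``$\mathbb P$ does (not) add a real of such-and-such kind over its ground model''; since every forcing used here is definable in $W$, there is no obstruction to carrying these arguments out over an arbitrary inner model $W$ in place of $L$. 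Among them I expect the Hechler and Miller cases to require the most care --- the former because one must check that the $\sigma$-centred structure of Hechler forcing genuinely rules out a slalom capturing all of $(\omega^\omega)^W$, and the latter because preservation of $\neq^*$-dominating families is a subtler property than boundedness or ``adding no Cohen real''.
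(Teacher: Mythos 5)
Your overall strategy---one classical forcing over $W$ per missing arrow, with the generic supplying the $\mathcal A$-witness and a preservation property of the forcing ruling out the $\mathcal B$-witness---is exactly the paper's, but three of your specific assignments fail. First, eventually different forcing $\mathbb E$ is not $\omega^\omega$-bounding: given any $g\in(\omega^\omega)^W$ and any condition $(p,\mathcal F)$, one may extend the stem at a new coordinate $n$ to a value exceeding $g(n)$ while avoiding the finitely many values $f(n)$ for $f\in\mathcal F$, so the generic eventually different real is itself unbounded over $W$; in $W[e]$ one in fact has $\mathcal D(\leq^*)=\omega^\omega\setminus(\omega^\omega)^W$, so $\mathbb E$ cannot separate $\mathcal B(\neq^*)$ from $\mathcal D(\leq^*)$. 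Second, random forcing does add an eventually different real---this is the classical fact that the set of reals failing to be eventually different from a fixed $f$ is null in the appropriate product space (part 2 of Fact \ref{factB}), and in $W[r]$ every new real lies in $\mathcal B(\neq^*)$---so random forcing cannot witness $\mathcal D(\in^*)\nsubseteq\mathcal B(\neq^*)$. These two errors happen to be repairable by swapping roles: random forcing, being $\omega^\omega$-bounding while adding an eventually different real, is what witnesses $\mathcal B(\neq^*)\nsubseteq\mathcal D(\leq^*)$, and Cohen forcing, already on your list, witnesses $\mathcal D(\in^*)\nsubseteq\mathcal B(\neq^*)$ since its generic lies in $\mathcal D(\neq^*)\subseteq\mathcal D(\in^*)$ and it adds no eventually different real.

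The more serious gap is that your list of critical separations omits $\mathcal B(\leq^*)\nsubseteq\mathcal D(\neq^*)$ (the local avatar of the consistency of $\mathrm{cov}(\mathcal M)<\mathfrak b$), and no forcing you name can supply it: in the Cohen, random, Miller, eventually different and Sacks extensions $\mathcal B(\leq^*)$ is empty; in the Hechler extension $\mathcal D(\neq^*)$ is all new reals; and for localization forcing the relation between these two nodes is precisely what the paper leaves open. The paper handles this with Laver forcing, which you never mention: the Laver real is dominating, so $\mathcal B(\leq^*)$ is nonempty (indeed all new reals, by Groszek's minimality theorem), while the Laver property implies that no real of $W[l]$ is infinitely often equal to all reals of $W$, so $\mathcal D(\neq^*)=\emptyset$. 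That preservation argument---truncating a putative infinitely-often-equal real below a ground model bound and then capturing it in a ground model slalom---is one of the genuinely delicate points of the proof and is not part of the folklore package you invoke, so it must be added before the bookkeeping closes.
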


Let me note one word on the relation between my diagram and the standard Cicho\'n diagram as commonly studied, for example in \cite{BarJu95}. Here I have focused on the so-called combinatorial nodes as discussed by \cite{BBNN}. As noted in the introduction I view this diagram in correspondence with the classical one via the mapping sending unbounded or dominating families with respect to a certain relation to the sets of reals $x$ such that in $W[x]$ the reals of $W$ are not unbounded or dominating. I have included this fragment of the Cicho\'n diagram to make this analogy clear visually.

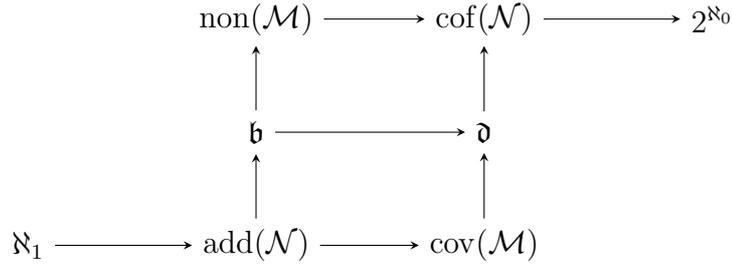
\begin{figure}[h]\label{Figure.Cichon-basic}
\centering
  \begin{tikzpicture}[scale=1.5,xscale=2]
     \draw (0,0) node (empty) {$\aleph_1$}
           (1,0) node (Bin*) {${\rm add}(\mathcal N)$}
           (1,1) node (Bleq*) {$\mathfrak b$}
           (1,2) node (Bneq*) {${\rm non}(\mathcal M)$}
           (2,0) node (Dneq*) {${\rm cov}(\mathcal M)/ \mathfrak{d}(\neq^*)$}
           (2,1) node (Dleq*) {$\mathfrak{d}$}
           (2,2) node (Din*) {${\rm cof}(\mathcal N)$}
           (3,2) node (all) {$2^{\aleph_0}$}
           ;
     \draw[->,>=stealth]
            (empty) edge (Bin*)
            (Bin*) edge (Bleq*)
            (Bleq*) edge (Bneq*)
            (Bleq*) edge (Dleq*)
            (Dneq*) edge (Dleq*)
            (Bin*) edge (Dneq*)
            (Bneq*) edge (Din*)
            (Dleq*) edge (Din*)
            (Din*) edge (all)
            
            ;
  \end{tikzpicture}
  \caption{The Combinatorial Nodes of the Standard Cicho\'n Diagram}
\end{figure}
The details of these correspondences for $\leq_T$ can be found in \cite{BBNN} and similar ideas hold in the present case, with one exception: ${\rm cov}(\mathcal M) / \mathfrak{d}(\neq^*)$. As is well known these cardinals are the same, however Zapletal has shown in \cite{dimtheoryandforcing} that their degree theoretic analogues are in fact different, thus solving a well known problem of Fremlin. In a planned sequel \cite{Switz18b} I will discuss this topic more as well as treat the mising nodes, namely those corresponding to invariants of measure and category. Note however, that the analogy holds between the combinatorial characterizations of the cardinal invariants, not their usual definitions. For example, ${\rm add} (\mathcal N)$ is known to be equal to $\mathfrak{b} (\in^*)$ and it is this cardinal that corresponds to $\mathcal B(\in^*)$. 

\subsection{Sacks Forcing}
The first forcing I will look at is Sacks forcing, $\mathbb S$. Recall that conditions in $\mathbb S$ are perfect trees $T \subseteq 2^{< \omega}$ ordered by inclusion. If $G$ is $\mathbb S$-generic then the unique branch in the intersection of all members of $G$ is called a {\em Sacks real}. I denote such a real $s$. The following theorem is just a reformulation of the Sacks property in terms of the the diagram, see by Lemma 7.3.2 of \cite{BarJu95}.

\begin{theorem}
In the Sacks extension all nodes of $\leq_W$-Cicho\'n diagram other than $\omega^\omega \setminus (\omega^\omega)^W$ are empty.
\begin{figure}[h]\label{Figure.Cichon-basic}
\centering
  \begin{tikzpicture}[scale=1.5,xscale=2]
     \draw (0,0) node (empty) {$\emptyset$}
           (1,0) node (Bin*) {$\mathcal B(\in^*)$}
           (1,1) node (Bleq*) {$\mathcal B(\leq^*)$}
           (1,2) node (Bneq*) {$\mathcal B(\neq^*)$}
           (2,0) node (Dneq*) {$\mathcal D(\neq^*)$}
           (2,1) node (Dleq*) {$\mathcal D(\leq^*)$}
           (2,2) node (Din*) {$\mathcal D(\in^*)$}
           (3,2) node (all) {$\omega^\omega\setminus(\omega^\omega)^W$}
           ;
     \draw[->,>=stealth]
            (empty) edge (Bin*)
            (Bin*) edge (Bleq*)
            (Bleq*) edge (Bneq*)
            (Bin*) edge (Dneq*)
            (Bleq*) edge (Dleq*)
            (Bneq*) edge (Din*)
            (Dneq*) edge (Dleq*)
            (Dleq*) edge (Din*)
            (Din*) edge (all)
            ;
       \draw[thick,dashed,OliveGreen] (2.5,-.5) -- (2.5,2.5);
       \draw[OliveGreen] (-.5,-.5) rectangle (3.5,2.5);
       \draw[draw=none,fill=yellow,fill opacity=.15] (-.5,-.5) rectangle (2.5,2.5);
       \draw[draw=none,fill=red,fill opacity=.15] (2.5,-.5) rectangle (3.5,2.5);
  \end{tikzpicture}
  \caption{After Sacks forcing}
\end{figure}
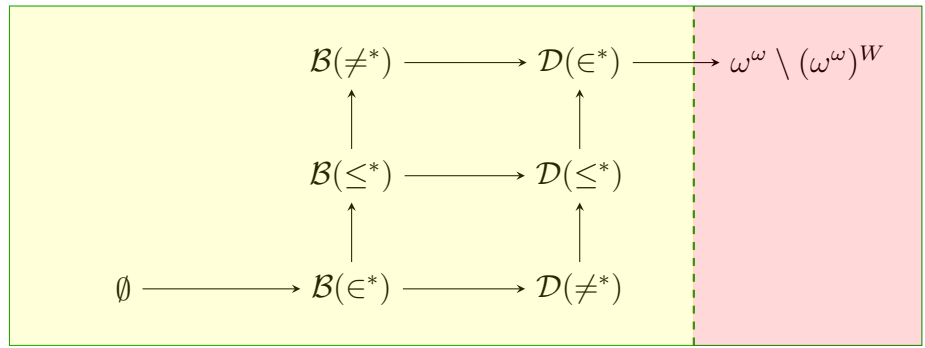
\end{theorem}



\subsection{Cohen Forcing}
Let $\mathbb C = Add(\omega , 1)$ be the forcing to add one Cohen real. The following theorem is essentially standard. The only thing worth pointing out in more detail is that every new real $x \in \omega^\omega \setminus W$ actually codes a Cohen real hence the equality between $\mathcal D (\neq^*)$ and $\omega^\omega \setminus (\omega^\omega)^W$.
\begin{theorem}
Let $c$ be a Cohen real generic over $W$. Then in $W[c]$ the following hold:
\begin{enumerate}
\item   
   $\emptyset = \mathcal B(\in^*) = \mathcal B(\leq^*) = \mathcal B(\neq^*)$ 
\item   
   $\mathcal D(\neq^*) = \mathcal D(\leq^*) = \mathcal D(\in^*) = \{x \; | \; \exists c \in W[x] \; {\rm Cohen \; over \; } W\} = \omega^\omega \setminus (\omega^\omega)^W$
   \end{enumerate}

\end{theorem}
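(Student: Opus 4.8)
The plan is to prove both items by squeezing the sets involved into short chains of inclusions, using Theorem \ref{propdi} for the structural arrows of the diagram and two classical facts about Cohen forcing for everything else: that it adds no eventually different real, and that its only intermediate models are again Cohen extensions. Throughout I use that for $\leq_W$ the top-right node is $\omega^\omega \setminus \{x \mid x \leq_W 0\} = \omega^\omega \setminus (\omega^\omega)^W$, and that $W[x]$ is an inner model of $W[c]$ whenever $x \in W[c]$.

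For item 1 it suffices to show $\mathcal B(\neq^*) = \emptyset$ in $W[c]$: then $\mathcal B(\in^*) \subseteq \mathcal B(\leq^*) \subseteq \mathcal B(\neq^*) = \emptyset$ by Theorem \ref{propdi}. Since every $W[x]$ with $x \in W[c]$ lies inside $W[c]$, this reduces to the statement that Cohen forcing over $W$ adds no real eventually different from all reals of $W$, which I would prove by the standard density argument: take $\mathbb C = \omega^{<\omega}$ ordered by extension and a name $\dot y$ for an element of $\omega^\omega$; working in $W$, enumerate $\mathbb C = \{s_k \mid k < \omega\}$ and pick $t_k \supseteq s_k$ with $t_k \Vdash \dot y(k) = j_k$, so that $f := (k \mapsto j_k) \in W$. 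Since each $r \in \mathbb C$ has infinitely many extensions among the $s_k$, hence among the $t_k$, the sets $\{r \mid \exists k \geq n \, (t_k \subseteq r)\}$ are dense, so for the generic $c$ there are infinitely many $k$ with $t_k \subseteq c$, and for each such $k$ we get $\dot y^c(k) = j_k = f(k)$. Thus $\dot y^c$ agrees infinitely often with $f \in W$, so it is not eventually different from all of $W$.

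For item 2, writing $C$ for the set $\{x \mid \exists c' \in W[x] \text{ Cohen over } W\}$, the plan is to close the cycle
\[
C \subseteq \mathcal D(\neq^*) \subseteq \mathcal D(\leq^*) \subseteq \mathcal D(\in^*) \subseteq \omega^\omega \setminus (\omega^\omega)^W \subseteq C.
\]
The first inclusion holds because a Cohen extension of $W$ contains, after the standard recoding $\mathrm{Fn}(\omega,2) \equiv \mathrm{Fn}(\omega,\omega)$, a real in $\omega^\omega$ that is infinitely often equal to every real of $W$; so if $W[x]$ contains a Cohen real over $W$ then $x$ builds an infinitely-often-equal real and lies in $\mathcal D(\neq^*)$. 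The three middle inclusions are exactly the arrows $\mathcal D(\neq^*) \to \mathcal D(\leq^*) \to \mathcal D(\in^*) \to (\omega^\omega \setminus (\omega^\omega)^W)$ provided by Theorem \ref{propdi}. Hence everything reduces to the last inclusion: in $W[c]$, every real $x \notin W$ has a Cohen real over $W$ in $W[x]$. Granting that, all five sets coincide, which is item 2.

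The main obstacle is that last inclusion, which I would derive from Grigorieff's intermediate-model theorem together with the structure theory of the Cohen algebra $\mathbb B = \mathrm{RO}(\mathbb C)$. Since $x \in W[c] \setminus W$, we have $W[x] = W[G \cap \mathbb A]$ for some complete, non-trivial subalgebra $\mathbb A \leq \mathbb B$. Projecting a countable dense subset of $\mathbb B$ into $\mathbb A$ shows $\mathbb A$ is separable; letting $a$ be the supremum of the atoms of $\mathbb A$, the relative algebra $\mathbb A \upharpoonright \neg a$ is separable, atomless and complete, hence isomorphic to the Cohen algebra, while forcing with the atomic part $\mathbb A \upharpoonright a$ is trivial. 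Because $x \notin W$ we must have $\neg a \in G$ (otherwise $W[x]$ would be a trivial extension), so $W[x]$ is forcing-equivalent to a non-trivial Cohen extension of $W$ and in particular contains a Cohen real over $W$. I expect the only delicate point to be the bookkeeping around the atomic part of $\mathbb A$; everything else is standard once the right facts are assembled, and this inclusion is in any case close to folklore and could alternatively just be cited.
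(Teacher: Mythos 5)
Your proposal is correct and follows essentially the same route as the paper: item 1 via the standard argument that Cohen forcing adds no eventually different real, and item 2 by chaining the diagram's arrows with the intermediate model theorem to show every new real computes a Cohen real over $W$. The only difference is cosmetic --- you spell out, via separability and the atomless/atomic decomposition of the complete subalgebra, why every nontrivial intermediate extension is again a Cohen extension, a step the paper simply asserts.
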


\noindent Thus, the full diagram for Cohen Forcing is:
\begin{figure}[h]\label{Figure.Cichon-basic}
\centering
  \begin{tikzpicture}[scale=1.5,xscale=2]
     \draw (0,0) node (empty) {$\emptyset$}
           (1,0) node (Bin*) {$\mathcal B(\in^*)$}
           (1,1) node (Bleq*) {$\mathcal B(\leq^*)$}
           (1,2) node (Bneq*) {$\mathcal B(\neq^*)$}
           (2,0) node (Dneq*) {$\mathcal D(\neq^*)$}
           (2,1) node (Dleq*) {$\mathcal D(\leq^*)$}
           (2,2) node (Din*) {$\mathcal D(\in^*)$}
           (3,2) node (all) {$\omega^\omega\setminus(\omega^\omega)^W$}
           ;
     \draw[->,>=stealth]
            (empty) edge (Bin*)
            (Bin*) edge (Bleq*)
            (Bleq*) edge (Bneq*)
            (Bin*) edge (Dneq*)
            (Bleq*) edge (Dleq*)
            (Bneq*) edge (Din*)
            (Dneq*) edge (Dleq*)
            (Dleq*) edge (Din*)
            (Din*) edge (all)
            ;
       \draw[thick,dashed,OliveGreen] (1.5,-.5) -- (1.5,2.5);
       \draw[OliveGreen] (-.5,-.5) rectangle (3.5,2.5);
       \draw[draw=none,fill=yellow,fill opacity=.15] (-.5,-.5) rectangle (1.5,2.5);
       \draw[draw=none,fill=Orange,fill opacity=.15] (1.5,-.5) rectangle (3.5,2.5);
  \end{tikzpicture}
  \caption{After Cohen forcing}
\end{figure}
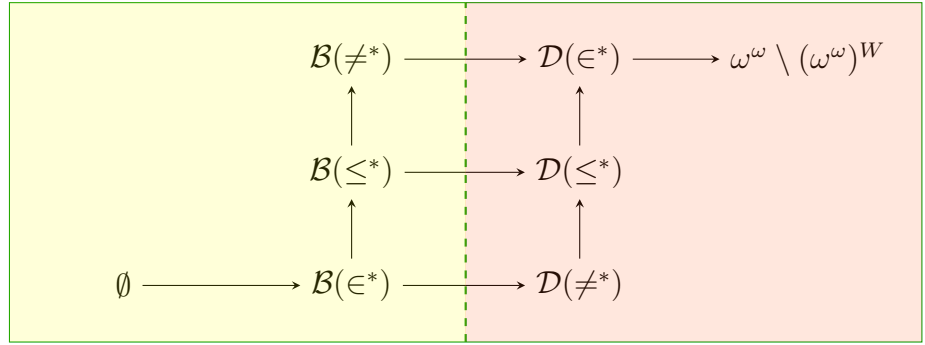


\subsection{Random Real Forcing}
I denote random real forcing by $\mathbb B$. The diagram for random real forcing is as described in the theorem belowand can be proved in a very similar way to that of Cohen forcing using the standard facts found in  \cite[Chapter 3]{BarJu95}. 
\begin{theorem}
Let $r$ be a random real over $W$. Then in $W[r]$ the $\leq_W$-Cicho\'n diagram is determined by the separations $\mathcal B(\in^*) = \mathcal B(\leq^*) = \mathcal D(\neq^*) = \mathcal D(\leq^*) = \emptyset$ and $\mathcal B (\neq^*) = \mathcal D(\in^*) = \omega^\omega \setminus (\omega^\omega)^W$.

\begin{figure}[h]\label{Figure.Cichon-basic}
\centering
  \begin{tikzpicture}[scale=1.5,xscale=2]
     \draw (0,0) node (empty) {$\emptyset$}
           (1,0) node (Bin*) {$\mathcal B(\in^*)$}
           (1,1) node (Bleq*) {$\mathcal B(\leq^*)$}
           (1,2) node (Bneq*) {$\mathcal B(\neq^*)$}
           (2,0) node (Dneq*) {$\mathcal D(\neq^*)$}
           (2,1) node (Dleq*) {$\mathcal D(\leq^*)$}
           (2,2) node (Din*) {$\mathcal D(\in^*)$}
           (3,2) node (all) {$\omega^\omega\setminus(\omega^\omega)^W$}
           ;
     \draw[->,>=stealth]
            (empty) edge (Bin*)
            (Bin*) edge (Bleq*)
            (Bleq*) edge (Bneq*)
            (Bin*) edge (Dneq*)
            (Bleq*) edge (Dleq*)
            (Bneq*) edge (Din*)
            (Dneq*) edge (Dleq*)
            (Dleq*) edge (Din*)
            (Din*) edge (all)
            ;
       \draw[thick,dashed,OliveGreen] (-.5,1.5) -- (3.5,1.5);
       \draw[OliveGreen] (-.5,-.5) rectangle (3.5,2.5);
       \draw[draw=none,fill=yellow,fill opacity=.15] (-.5,-.5) rectangle (3.5,1.5);
       \draw[draw=none,fill=Orange,fill opacity=.15] (-.5,1.5) rectangle (3.5,2.5);
  \end{tikzpicture}
  \caption{After Random Real forcing}
\end{figure}
\label{randomthm}
\end{theorem}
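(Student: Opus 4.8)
The plan is to read off each part of the diagram from two facts about $\mathbb B$: it is $\omega^\omega$-bounding, and every nontrivial intermediate model it produces is again a random extension of $W$. For the lower half of the diagram, recall that $\mathbb B$ is $\omega^\omega$-bounding: given a name $\dot f$ for an element of $\omega^\omega$ and a positive-measure condition $B$, one passes to subconditions of measure at least $(1-2^{-n})\mu(B)$ deciding $\dot f\restriction n$ and reads off in $W$ a pointwise upper bound $g$ for $\dot f$ (cf.\ \cite{BarJu95}). Hence no real of $W[r]$ is $\leq^*$-unbounded over $W$, so $\mathcal D(\leq^*)=\emptyset$, and no real of $W[r]$ is $\leq^*$-dominating over $W$ (such an $f$ would have to dominate $g+1$ for a ground-model bound $g\geq^* f$, which is impossible), so $\mathcal B(\leq^*)=\emptyset$. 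The inclusions $\mathcal B(\in^*)\subseteq\mathcal B(\leq^*)$ and $\mathcal D(\neq^*)\subseteq\mathcal D(\leq^*)$ from Theorem~\ref{propdi} then give $\mathcal B(\in^*)=\mathcal D(\neq^*)=\emptyset$ as well, which is the whole left column together with $\mathcal D(\neq^*)$.

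For the top row, I would show that the random real, suitably recoded, is itself eventually different from all reals of $W$. Present $\mathbb B$ as the measure algebra of $\prod_n 2^{I_n}$ with product measure, where $\{I_n\}_{n\in\omega}$ partitions $\omega$ into consecutive intervals with $|I_n|=n$, so the generic object is a function $\hat r$ with $\hat r(n)<2^{n}$ for all $n$. Given $f\in(\omega^\omega)^W$, set $f'(n)=f(n)$ if $f(n)<2^n$ and $f'(n)=0$ otherwise; the events $\{z:z(n)=f'(n)\}$ are independent of measure $2^{-n}$, and since $\sum_n 2^{-n}<\infty$, by Borel--Cantelli the set $\{z:\exists^\infty n\ z(n)=f'(n)\}$ is a null Borel set coded in $W$. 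As $\hat r$ avoids it, $\hat r(n)\neq f'(n)$ for all large $n$, and hence $\hat r(n)\neq f(n)$ for all large $n$ (the case $f(n)\geq 2^n$ being automatic since $\hat r(n)<2^n$). Because $\hat r\in W[r]$, this shows $r\in\mathcal B(\neq^*)$, so the top row is nonempty.

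It remains to show that \emph{every} real $x\in W[r]\setminus W$ lies in $\mathcal B(\neq^*)$; combined with the inclusions $\mathcal B(\neq^*)\subseteq\mathcal D(\in^*)\subseteq\omega^\omega\setminus(\omega^\omega)^W$ of Theorem~\ref{propdi}, this forces all three to coincide and completes the diagram. Here I would invoke the structure of intermediate models of random forcing. By the standard analysis of intermediate submodels of a generic extension, $W[x]=W[G\cap\mathbb A]$ for a complete subalgebra $\mathbb A$ of the measure algebra $\mathbb B$; with the restricted measure $\mathbb A$ is again a finite measure algebra, and it is separable because $L^2$ of the corresponding sub-$\sigma$-algebra is a closed subspace of the separable Hilbert space $L^2(\mathbb B)$. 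Letting $\bar a$ be the join of the atoms of $\mathbb A$, forcing with the atomic part $[0,\bar a]^{\mathbb A}$ adds nothing (its atoms form a maximal antichain met by the generic, pinning the $\mathbb A$-generic down over $W$), so since $W[x]\neq W$ the generic meets $1-\bar a$, and $[0,1-\bar a]^{\mathbb A}$ is a separable atomless probability measure algebra, hence isomorphic over $W$ to $\mathbb B$ by the classical isomorphism theorem for such algebras. Thus $W[x]$ contains a real random over $W$, and by the previous paragraph (applied inside $W[x]$ relative to $W$) it therefore contains a real eventually different from all reals of $W$, i.e.\ $x\in\mathcal B(\neq^*)$. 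The main obstacle is precisely this last step: the $\omega^\omega$-bounding argument and the Borel--Cantelli computation are routine, and the work is in the structure theory of intermediate models of random forcing --- that a nontrivial complete subalgebra of the measure algebra is, modulo its inert atomic part, again a copy of $\mathbb B$.
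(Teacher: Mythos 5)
Your proposal is correct and follows essentially the same route as the paper: the paper reduces the theorem to exactly the three facts you establish (random forcing is $\omega^\omega$-bounding, the random real is eventually different from the ground-model reals, and every nontrivial intermediate model contains a real random over $W$), citing them from \cite{BarJu95} rather than proving them. Your write-up simply supplies the standard proofs (Borel--Cantelli for eventual difference, Maharam-type isomorphism of atomless separable measure subalgebras for the intermediate-model fact) that the paper leaves to the reference.
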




\subsection{Laver Forcing}
Let me now turn to Laver forcing, $\mathbb L$. Recall that conditions in Laver forcing are trees $T\subseteq \omega^{ < \omega}$ with a distinguished {\em stem}, that is, a linearly ordered initial segment, after which there is infinite branching at each node. The order is inclusion. The union of the stems of the trees in a generic for $\mathbb L$ form a real, called a Laver real. Let $l$ denote such a real over $W$. 
\begin{theorem}
Let $l$ be a Laver real over $W$. Then the $\leq_W$ diagram in $W[l]$ has $\emptyset = \mathcal B(\in^*) = \mathcal D(\neq^*)$ and all other nodes are equal to the set of all new reals.
\begin{figure}[h]\label{Figure.Cichon-basic}
\centering
  \begin{tikzpicture}[scale=1.5,xscale=2]
     \draw (0,0) node (empty) {$\emptyset$}
           (1,0) node (Bin*) {$\mathcal B(\in^*)$}
           (1,1) node (Bleq*) {$\mathcal B(\leq^*)$}
           (1,2) node (Bneq*) {$\mathcal B(\neq^*)$}
           (2,0) node (Dneq*) {$\mathcal D(\neq^*)$}
           (2,1) node (Dleq*) {$\mathcal D(\leq^*)$}
           (2,2) node (Din*) {$\mathcal D(\in^*)$}
           (3,2) node (all) {$\omega^\omega\setminus(\omega^\omega)^W$}
           ;
     \draw[->,>=stealth]
            (empty) edge (Bin*)
            (Bin*) edge (Bleq*)
            (Bleq*) edge (Bneq*)
            (Bin*) edge (Dneq*)
            (Bleq*) edge (Dleq*)
            (Bneq*) edge (Din*)
            (Dneq*) edge (Dleq*)
            (Dleq*) edge (Din*)
            (Din*) edge (all)
            ;
       \draw[thick,dashed,OliveGreen] (-.5,.5) -- (3.5,.5);
       \draw[OliveGreen] (-.5,-.5) rectangle (3.5,2.5);
       \draw[draw=none,fill=yellow,fill opacity=.15] (-.5,-.5) rectangle (3.5,.5);
       \draw[draw=none,fill=Orange,fill opacity=.15] (-.5,.5) rectangle (3.5,2.5);
  \end{tikzpicture}
  \caption{After Laver forcing}
\end{figure}
\label{laver}
\end{theorem}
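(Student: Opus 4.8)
The plan is to show that the $\leq_W$-diagram in $W[l]$ takes only two values: it is empty at the two bottom nodes $\mathcal B(\in^*)$ and $\mathcal D(\neq^*)$, and equal to $\omega^\omega\setminus(\omega^\omega)^W$ at every other node. By Theorem \ref{propdi} it then suffices to prove three statements: (a) $\mathcal B_{\leq_W}(\leq^*)=\omega^\omega\setminus(\omega^\omega)^W$; (b) $\mathcal D_{\leq_W}(\neq^*)=\emptyset$; and (c) $\mathcal B_{\leq_W}(\in^*)=\emptyset$. Everything else is formal: the two chains $\mathcal B(\leq^*)\subseteq\mathcal B(\neq^*)\subseteq\mathcal D(\in^*)\subseteq\omega^\omega\setminus(\omega^\omega)^W$ and $\mathcal B(\leq^*)\subseteq\mathcal D(\leq^*)\subseteq\mathcal D(\in^*)$ from Theorem \ref{propdi}, together with (a), force $\mathcal B(\neq^*)$, $\mathcal D(\leq^*)$ and $\mathcal D(\in^*)$ to equal $\omega^\omega\setminus(\omega^\omega)^W$ as well; and, as I note at the end, (c) follows from (b) via the inclusion $\mathcal B(\in^*)\subseteq\mathcal D(\neq^*)$ of Theorem \ref{propdi}.

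For (a) I would use the \emph{minimality} of Laver forcing (a classical fact about $\mathbb L$): every $x\in W[l]\setminus W$ satisfies $W[x]=W[l]$, hence $l\in W[x]$; since the Laver real dominates $(\omega^\omega)^W$, this gives $x\in\mathcal B_{\leq_W}(\leq^*)$, and as no node of the diagram contains a basic real, (a) follows. (If one prefers to avoid quoting minimality outright, it suffices to know that every model strictly between $W$ and $W[l]$ contains a real dominating $(\omega^\omega)^W$.)

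For (b), Lemma \ref{infslalom} reduces the claim to showing that $W[l]$ contains no basic $h$ and no $h$-slalom capturing every real of $W$ infinitely often; equivalently, no real of $W[l]$ is infinitely often equal, along a ground model interval partition, to every real of $W$. I would split on whether the candidate real $y\in W[l]$ is dominated by a ground model function. If it is, the Laver property of $\mathbb L$ produces a slalom $\tau\in W$ with $|\tau(n)|\leq n$ and $y(n)\in\tau(n)$ for all large $n$; then any $f\in W$ with $f(n)\notin\tau(n)$ for every $n$ (such $f$ exists in $W$, since $\tau\in W$ and $|\tau(n)|$ is finite) is eventually different from $y$, so $y$ is not infinitely often equal to all of $W$. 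If $y$ is unbounded over $W$, I would invoke the classical preservation fact that Laver forcing keeps $(\omega^\omega)^W$ a $\mathrm{cov}(\mathcal M)$-covering family --- equivalently, $\mathbb L$ adds no real that is infinitely often equal, even along a ground model interval partition, to every real of $W$; this is part of the standard package of preservation properties of $\mathbb L$ (it subsumes the well-known fact that $\mathbb L$ adds no Cohen real), worked out in \cite{BarJu95}. Combining the two cases gives (b), and (c) is then immediate from $\mathcal B_{\leq_W}(\in^*)\subseteq\mathcal D_{\leq_W}(\neq^*)=\emptyset$.

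The hard part is the unbounded case of (b): ruling out a real of $W[l]$ that is infinitely often equal to all reals of $W$. The soft argument through the Laver property only controls names dominated by a ground model function, and the counting trick used for $\mathbb D$ and $\mathbb E$ in Lemma \ref{L[d], B(in)} does not transfer, because Laver conditions sharing a stem need not be compatible, so one cannot bound the number of values forced into a given coordinate of the name. Establishing the needed preservation theorem for $\mathbb L$ requires a genuine fusion argument tailored to the combinatorics of Laver trees, and that is where essentially all of the content sits; propagating (a) through the diagram and deriving (c) from (b) use nothing beyond the implications already proved in Theorem \ref{propdi}.
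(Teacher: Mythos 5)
Your decomposition matches the paper's: minimality of the Laver real plus the fact that $l$ dominates gives $\mathcal B(\leq^*)=\omega^\omega\setminus(\omega^\omega)^W$ and hence, via the inclusions of Theorem \ref{propdi}, the same for every node above it, while $\mathcal B(\in^*)=\emptyset$ falls out of $\mathcal B(\in^*)\subseteq\mathcal D(\neq^*)$ once $\mathcal D(\neq^*)=\emptyset$ is known. The gap is exactly where you say the content sits: you do not actually prove $\mathcal D(\neq^*)=\emptyset$. You handle only the case of a candidate real dominated by a ground model function, and for the unbounded case you appeal to a preservation theorem (``$\mathbb L$ keeps $(\omega^\omega)^W$ a $\mathrm{cov}(\mathcal M)$-covering family'') that you neither prove nor locate precisely; note that this statement is literally equivalent to the claim being proved, so citing it is circular unless it really is an independently established result, and the paper's own footnote (the author's first proof was wrong and the correct one was supplied by Goldstern) indicates it is not a routinely citable item. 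Your assertion that the unbounded case needs a fresh fusion argument is also not right.

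The paper closes the gap with a truncation trick that makes your case split unnecessary and shows the Laver property alone suffices. Suppose $y\in W[l]$ were infinitely often equal to every real of $W$. Fix any $h\in(\omega^\omega)^W$ with $h(n)>n$ and set $y'=\min\{y,h\}$ pointwise. Then $y'$ is bounded by a ground model real, so the Laver property yields a slalom $\sigma\in W$ with $|\sigma(n)|\leq n$ capturing $y'$ on a tail. In $W$ define $g(n)$ to be the least $k\leq n$ with $k\notin\sigma(n)$; such $k$ exists since $|\sigma(n)|\leq n$, and $g\leq h$. Since $y$ is infinitely often equal to $g$, there are infinitely many $n$ with $y(n)=g(n)\leq n<h(n)$, whence $y'(n)=y(n)=g(n)\notin\sigma(n)$, contradicting that $\sigma$ eventually captures $y'$. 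The point you missed is that one never needs to control the unbounded real $y$ itself, only its behaviour against ground model reals lying below a fixed $h$, and for that purpose the bounded truncation $y'$ carries all the information. With this lemma in place the rest of your argument goes through as written.
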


As before this theorem follows from well known facts about $\mathbb L$. In particular the Laver property, \cite[Definition 6.3.27]{BarJu95}, which holds of $\mathbb L$ \cite[Theorem 7.3.29]{BarJu95}, implies that there are no infinitely often equal reals in $W[l]$. Thus it suffices to note that $l$ is dominating and, by \cite[Theorem 7]{Gro87}, that Laver reals satisfy the following minimality property: if $x$ is a real such that $x \in W[l] \setminus W$ then $l \in W[x]$. Therefore every new real constructs a dominating real, hence the equality between $\mathcal B(\leq^*)$ and $\omega^\omega \setminus (\omega^\omega)^W$.

\subsection{Rational Perfect Tree Forcing}
Next I look at is Miller's rational perfect tree forcing, $\mathbb{PT}$. Recall that $\mathbb{PT}$ is the set of perfect trees $T \subseteq \omega^{< \omega}$ so that for all $s \in T$ there is a $t \supseteq s$ with $\omega$-many immediate successors. The order is inclusion and the unique branch through the trees in the generic is called a {\em Miller real}. Let us denote such a real by $m$.
\begin{theorem}
Let $m$ be a Miller real over $W$. Then the $\leq_W$ diagram in $W[m]$ is determined by $\emptyset = \mathcal B(\neq^*) = \mathcal D(\neq^*)$ and all other nodes are equal to the set of all new reals.
\begin{figure}[h]\label{Figure.Cichon-basic}
\centering
  \begin{tikzpicture}[scale=1.5,xscale=2]
     \draw (0,0) node (empty) {$\emptyset$}
           (1,0) node (Bin*) {$\mathcal B(\in^*)$}
           (1,1) node (Bleq*) {$\mathcal B(\leq^*)$}
           (1,2) node (Bneq*) {$\mathcal B(\neq^*)$}
           (2,0) node (Dneq*) {$\mathcal D(\neq^*)$}
           (2,1) node (Dleq*) {$\mathcal D(\leq^*)$}
           (2,2) node (Din*) {$\mathcal D(\in^*)$}
           (3,2) node (all) {$\omega^\omega\setminus(\omega^\omega)^W$}
           ;
     \draw[->,>=stealth]
            (empty) edge (Bin*)
            (Bin*) edge (Bleq*)
            (Bleq*) edge (Bneq*)
            (Bin*) edge (Dneq*)
            (Bleq*) edge (Dleq*)
            (Bneq*) edge (Din*)
            (Dneq*) edge (Dleq*)
            (Dleq*) edge (Din*)
            (Din*) edge (all)
            ;
       \draw[thick,dashed,OliveGreen] (1.5,.5) -- (3.5,.5);
       \draw[thick,dashed,OliveGreen] (1.5,.5) -- (1.5,2.5);
       \draw[OliveGreen] (-.5,-.5) rectangle (3.5,2.5);
       \draw[draw=none,fill=yellow,fill opacity=.15] (-.5,-.5) rectangle (3.5,.5);
       \draw[draw=none,fill=yellow,fill opacity=.15] (-.5,.5) rectangle (1.5,2.5);
       \draw[draw=none,fill=Orange,fill opacity=.15] (1.5,.5) rectangle (3.5,2.5);
  \end{tikzpicture}
  \caption{After rational perfect forcing}
\end{figure}
\label{miller}
\end{theorem}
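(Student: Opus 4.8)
The plan is to establish two ``negative'' preservation facts about $\mathbb{PT}$, one ``positive'' fact, and the minimality of the Miller real, and then let the implications proved in Theorem \ref{propdi} propagate these through the diagram.

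\emph{The empty nodes.} By Theorem \ref{propdi} one has $\mathcal B(\in^*)\subseteq\mathcal B(\leq^*)\subseteq\mathcal B(\neq^*)$, so to empty the whole left column it suffices to show $\mathcal B(\neq^*)=\emptyset$, i.e.\ that $\mathbb{PT}$ adds no real eventually different from every real of $W$. This is equivalent to the assertion that $\mathbb{PT}$ does not make $(\omega^\omega)^W$ meager in $W[m]$: if $y\in W[m]$ were eventually different from all of $(\omega^\omega)^W$, then $(\omega^\omega)^W$ would lie in the meager set $\{h\;|\;h\neq^* y\}$. The fact that rational perfect tree forcing preserves non-meager sets is well known --- it is what pins $\mathrm{non}(\mathcal M)$ to $\aleph_1$ in the Miller model --- and I would cite \cite{BarJu95} (or \cite{BlassHB}) for it, although one can also argue directly by a fusion along a superperfect condition, diagonalising at stage $n$ against the finitely many candidate values of the name below the finitely many nodes then under consideration, so as to force a ground model function to meet the name infinitely often. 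It remains to kill $\mathcal D(\neq^*)$, i.e.\ to show $\mathbb{PT}$ adds no infinitely-often-equal real over $W$; this is equally standard (it is what pins $\mathrm{cov}(\mathcal M)$ to $\aleph_1$ in the Miller model, \cite{BarJu95}) and can likewise be obtained by a fusion in the spirit of the Laver case treated earlier in the paper. Together these give $\mathcal B(\in^*)=\mathcal B(\leq^*)=\mathcal B(\neq^*)=\mathcal D(\neq^*)=\emptyset$.

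\emph{The full nodes.} The Miller real $m$ is unbounded over $W$, so $m\in\mathcal D(\leq^*)$. Moreover $\mathbb{PT}$ has the minimality property: every $x\in W[m]\setminus W$ satisfies $m\in W[x]$ --- the minimality of Miller reals is well known, being the analogue for $\mathbb{PT}$ of Groszek's theorem \cite{Gro87} for $\mathbb L$ quoted above. Hence every new real $x$ builds the real $m$ inside $W[x]$, and $m$ is not dominated by any real of $W$, so $x\in\mathcal D(\leq^*)$; therefore $\mathcal D(\leq^*)=\omega^\omega\setminus(\omega^\omega)^W$. Since $\mathcal D(\leq^*)\subseteq\mathcal D(\in^*)\subseteq\omega^\omega\setminus(\omega^\omega)^W$ by Theorem \ref{propdi}, the three nodes $\mathcal D(\leq^*)$, $\mathcal D(\in^*)$ and $\omega^\omega\setminus(\omega^\omega)^W$ all coincide with the set of new reals, which completes the diagram.

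\emph{The main obstacle.} The only genuinely technical point is $\mathcal D(\neq^*)=\emptyset$. For $\mathbb L$ the Laver property was used in an essential way; for $\mathbb{PT}$ the infinite branching makes the corresponding fusion more delicate, since the name for the putative infinitely-often-equal real must be kept from escaping every ground model function while one is only allowed to thin, never truncate, the branching sets along the fusion. Should a clean self-contained argument prove awkward, the fallback is to cite the preservation theorem for $\mathbb{PT}$ --- that it adds no infinitely-often-equal real, equivalently that it preserves that $(\omega^\omega)^W$ is $\neq^*$-dominating --- from \cite{BarJu95}.
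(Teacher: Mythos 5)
Your proposal is correct and follows essentially the same route as the paper: the paper likewise reduces the theorem to the facts that $\mathbb{PT}$ adds no eventually different real (citing \cite{BarJu95}), adds no infinitely-often-equal real (via the Laver property, exactly as in the Laver case), that $m$ is unbounded, and that the Miller real is minimal (citing \cite{Gro87}) so that every new real lands in $\mathcal D(\leq^*)$. The only cosmetic difference is that you phrase the first two preservation facts in terms of non-meagerness and a fusion argument rather than quoting the specific theorems of \cite{BarJu95}, but the underlying content is identical.
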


This is proved in the same way as for Laver forcing. It suffices to note that $\mathbb{PT}$ adds no eventually different real, see \cite[Theorem 7.3.46, Part 1]{BarJu95}, $\mathbb{PT}$ adds no infinitely often equal real as it enjoys the Laver property (\cite[Theorem 7.3.45]{BarJu95}) and $m$ is of minimal degree, see \cite[Theorem 3]{Gro87}.


\subsection{Hechler Forcing}
Let $\mathbb D$ be Hechler forcing and let $d$ be the associated dominating real. Recall that conditions of $\mathbb D$ are pairs $(p, \mathcal F)$ where $p$ is a finite partial function from $\omega$ to $\omega$ and $\mathcal F$ is a finite family of elements of $\omega^\omega$. The order is given by $(q, \mathcal G) \leq_\mathbb D (p, \mathcal F)$ if and only if $q \supseteq p$, $\mathcal G \supseteq \mathcal F$ and for all $n \in {\rm dom}(q) \setminus {\rm dom}(p)$ and all $f \in \mathcal F$, $q(n) > f(n)$. Note that since $d$ is dominating, $d \in \mathcal B(\leq^*)$. 
\begin{theorem}
After Hechler forcing over $W$ the $\leq_W$-diagram has
\begin{enumerate}
\item
 $\emptyset = \mathcal B(\in^*)$, 
 \item
 $\mathcal B(\leq^*) = \mathcal B(\neq^*)$ and
 \item
 $\mathcal D(\neq^*) = \mathcal D(\leq^*) = \mathcal D(\in^*) = \omega^\omega \setminus (\omega^\omega)^W$. 
 \end{enumerate}

\begin{figure}[h]\label{Figure.Cichon-basic}
\centering
  \begin{tikzpicture}[scale=1.5,xscale=2]
     \draw (0,0) node (empty) {$\emptyset$}
           (1,0) node (Bin*) {$\mathcal B(\in^*)$}
           (1,1) node (Bleq*) {$\mathcal B(\leq^*)$}
           (1,2) node (Bneq*) {$\mathcal B(\neq^*)$}
           (2,0) node (Dneq*) {$\mathcal D(\neq^*)$}
           (2,1) node (Dleq*) {$\mathcal D(\leq^*)$}
           (2,2) node (Din*) {$\mathcal D(\in^*)$}
           (3,2) node (all) {$\omega^\omega\setminus(\omega^\omega)^W$}
           ;
     \draw[->,>=stealth]
            (empty) edge (Bin*)
            (Bin*) edge (Bleq*)
            (Bleq*) edge (Bneq*)
            (Bin*) edge (Dneq*)
            (Bleq*) edge (Dleq*)
            (Bneq*) edge (Din*)
            (Dneq*) edge (Dleq*)
            (Dleq*) edge (Din*)
            (Din*) edge (all)
            ;
       \draw[thick,dashed,OliveGreen] (1.5,-.5) -- (1.5,2.5);
       \draw[thick,dashed,OliveGreen] (-.5,.5) -- (1.5,.5);
       \draw[OliveGreen] (-.5,-.5) rectangle (3.5,2.5);
       \draw[draw=none,fill=yellow,fill opacity=.15] (-.5,-.5) rectangle (1.5,.5);
       \draw[draw=none,fill=blue,fill opacity=.15] (-.5,.5) rectangle (1.5,2.5);
       \draw[draw=none,fill=Orange,fill opacity=.15] (1.5,-.5) rectangle (3.5,2.5);
  \end{tikzpicture}
  \caption{After Hechler forcing}
\end{figure}
\label{hechler}
\end{theorem}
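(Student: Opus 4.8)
All the arrows are instances of Theorem~\ref{propdi} (as $\leq_W$ extends $\leq_T$), so only the three displayed equalities require argument, and using the implications already present in the diagram each reduces to a single inclusion: (1) is exactly $\mathcal B(\in^*)=\emptyset$; (2) is $\mathcal B(\neq^*)\subseteq\mathcal B(\leq^*)$, the reverse being an arrow; and since $\mathcal D(\neq^*)\subseteq\mathcal D(\leq^*)\subseteq\mathcal D(\in^*)\subseteq\omega^\omega\setminus(\omega^\omega)^W$ always holds, (3) is the single inclusion $\omega^\omega\setminus(\omega^\omega)^W\subseteq\mathcal D(\neq^*)$, i.e.\ that $W[x]$ contains a real infinitely-often-equal to every real of $W$, for every $x\in W[d]\setminus W$.

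For $\mathcal B(\in^*)=\emptyset$ I would use that $\mathbb D$ is $\sigma$-centered. Fix a decomposition $\mathbb D=\bigcup_{n}C_n$ into centered pieces and a name $\dot\sigma$ for a slalom with $|\dot\sigma(m)|\le m$. For $n,m\in\omega$ put $S_{n,m}=\{k\in\omega : (\exists q\in C_n)\ q\Vdash\check k\in\dot\sigma(\check m)\}$; since $C_n$ is centered, finitely many witnesses to membership in $S_{n,m}$ have a common extension forcing all those values into $\dot\sigma(\check m)$, whence $|S_{n,m}|\le m$. Thus in $W$ one may choose $f$ with $f(m)\notin\bigcup_{n\le m}S_{n,m}$, and no condition forces $f$ eventually captured by $\dot\sigma$: if $(p,\mathcal F)\in C_{n_0}$ forced $\check f(l)\in\dot\sigma(l)$ for all $l>k_0$, then taking $m>\max(n_0,k_0)$ gives $f(m)\in S_{n_0,m}$, a contradiction. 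Since any slalom witnessing $x\in\mathcal B(\in^*)$ for some $x\in W[d]$ lies in $W[d]$, this yields $\mathcal B(\in^*)=\emptyset$ outright.

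The remaining two points concern which intermediate models $W\subsetneq W[x]\subseteq W[d]$ contain, over $W$, an infinitely-often-equal real, an eventually different real, and a dominating real. For (3) the plan is to show every such $W[x]$ contains an infinitely-often-equal real over $W$; the natural route is via a Cohen real over $W$ in $W[x]$ (which, as in the Cohen case just treated, is infinitely-often-equal to every real of $W$), the content being that although $\mathbb D$ adds a Cohen real classically, this must be seen to persist to \emph{every} proper intermediate real model — either by analysing the complete subalgebras of $\mathrm{RO}(\mathbb D)$ or by a direct density argument isolating, inside an arbitrary new real $x$, the genuinely generic content of the Hechler conditions (the values of $d$ lying above the bound imposed by the side conditions are chosen with complete freedom and assemble into a Cohen real). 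For (2), note that the Cohen case shows a Cohen extension of $W$ contains no real eventually different from all of $(\omega^\omega)^W$; so any $W[x]$ witnessing $x\in\mathcal B(\neq^*)$ fails to be a Cohen extension of $W$, and the plan is then to show that any $e\in W[d]$ eventually different from every real of $W$ in fact builds, over $W$, a dominating real — after which $e\in W[x]$ delivers a dominating real in $W[x]$, i.e.\ $x\in\mathcal B(\leq^*)$. (The Hechler real $d$ itself is the extreme instance: it is eventually different from every ground model real precisely because it dominates them all.)

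I expect this last step — extracting a dominating real over $W$ from an \emph{arbitrary} eventually different real over $W$ living inside a Hechler extension — to be the main obstacle, since eventually different reals can a priori grow slowly, and one must show that in a Hechler extension the mechanism that produces such a real in an intermediate model is already forced to produce a dominating one. By comparison the $\sigma$-centered computation for $\mathcal B(\in^*)$ and the bookkeeping needed to locate Cohen reals in intermediate models are routine; and, exactly as in the Sacks and Cohen sections, the argument localises, showing more generally that forcing with $\mathbb D^{W}$ over any $V\supseteq W$ produces the same constellation relative to $W$.
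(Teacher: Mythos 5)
Your reduction of the theorem to three single inclusions is exactly right, and your $\sigma$-centered computation showing $\mathcal B(\in^*)=\emptyset$ is correct and is essentially the paper's own argument (the paper organizes it by stems rather than by centered pieces, and uses weak homogeneity to fix the maximal condition, but the diagonalization against the at most $m^2$ values forceable into $\dot\sigma(m)$ by finitely many stems is the same computation as your bound on $\bigcup_{n\le m}S_{n,m}$).

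The gap is in the other two inclusions, which you correctly isolate but do not prove, and which are precisely the two substantive external inputs of the paper's proof. For $\mathcal B(\neq^*)\subseteq\mathcal B(\leq^*)$ the paper invokes a theorem of Brendle and L\"owe (\cite[Corollary 13]{BL11}): in $W[d]$, any real eventually different from all reals of $W$ already eventually dominates them. You name exactly this statement as "the main obstacle" but offer no argument for it; it is a genuine theorem about the Hechler extension, not something that falls out of the definition of $\mathbb E$-versus-$\mathbb D$-style conditions. For $\omega^\omega\setminus(\omega^\omega)^W\subseteq\mathcal D(\neq^*)$ the paper invokes Palumbo's intermediate-model theorem (\cite[Theorem 8.1]{Pal13}): every nontrivial intermediate model $W\subsetneq W[x]\subseteq W[d]$ contains a Cohen real over $W$. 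Your suggested routes --- analysing the complete subalgebras of ${\rm RO}(\mathbb D)$, or a "direct density argument" extracting the free part of the Hechler conditions --- would amount to reproving this, and there is internal evidence in the paper that this is not routine: Palumbo's proof goes through a tree presentation of $\mathbb D$, and the analogous statement for eventually different forcing is only established there under the hypothesis that all sets of reals in $L(\mathbb R)$ have the Baire property. (Note also that knowing $d\bmod 2$ is Cohen, or that $\mathbb D\cong\mathbb C*\dot{\mathbb D}$, gives you \emph{some} intermediate Cohen extensions, but not that \emph{every} new real computes one, which is what item (3) requires.) So the skeleton and part (1) stand, but parts (2) and (3) need either proofs or explicit citations of these two results.
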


\begin{proof}
There are three things to observe: first that Hechler forcing does not add an eventually different real that is not dominating, second that it does not add a slalom eventually capturing all ground model reals and the third is that every subforcing of Hechler forcing adds an infinitely often equal real. The first of these is Corollary 13 of \cite{BL11}. The second is well known, see for example Theorem 3.6 of \cite{ShJdKM}. The third follows from \cite[Theorem 8.1]{Pal13} which states that every subforcing of Hechler forcing adds a Cohen real.

\end{proof}

\subsection{Eventually Different Forcing}
Let $\mathbb E$ be {\em eventually different forcing}, which is defined like $\mathbb D$ except that stems of extensions need simply be eventually different from the reals in the second component, not dominating. I will show that:

\begin{theorem}
Assume that every set of reals in $L(\mathbb R)$ has the Baire property (this is implied by sufficiently large cardinals). Let $e$ be an $\mathbb E$-generic real over $W$. Then in $W[e]$ the following hold: 
\begin{enumerate}
\item
$\mathcal B(\in^*) = \mathcal B(\leq^*) = \emptyset$, 
\item
$\mathcal B(\neq^*) \subsetneq \mathcal D(\neq^*) = \mathcal D(\leq^*) = \mathcal D(\in^*) = \omega^\omega \setminus (\omega^\omega)^W$.
\end{enumerate}
Thus in particular the full diagram for eventually different forcing is as shown in Figure 7.

\begin{figure}[h]\label{Figure.Cichon-basic}
\centering
  \begin{tikzpicture}[scale=1.5,xscale=2]
     \draw (0,0) node (empty) {$\emptyset$}
           (1,0) node (Bin*) {$\mathcal B(\in^*)$}
           (1,1) node (Bleq*) {$\mathcal B(\leq^*)$}
           (1,2) node (Bneq*) {$\mathcal B(\neq^*)$}
           (2,0) node (Dneq*) {$\mathcal D(\neq^*)$}
           (2,1) node (Dleq*) {$\mathcal D(\leq^*)$}
           (2,2) node (Din*) {$\mathcal D(\in^*)$}
           (3,2) node (all) {$\omega^\omega\setminus(\omega^\omega)^W$}
           ;
     \draw[->,>=stealth]
            (empty) edge (Bin*)
            (Bin*) edge (Bleq*)
            (Bleq*) edge (Bneq*)
            (Bin*) edge (Dneq*)
            (Bleq*) edge (Dleq*)
            (Bneq*) edge (Din*)
            (Dneq*) edge (Dleq*)
            (Dleq*) edge (Din*)
            (Din*) edge (all)
            ;
       \draw[thick,dashed,OliveGreen] (1.5,-.5) -- (1.5,2.5);
       \draw[thick,dashed,OliveGreen] (-.5,1.5) -- (1.5,1.5);
       \draw[OliveGreen] (-.5,-.5) rectangle (3.5,2.5);
       \draw[draw=none,fill=yellow,fill opacity=.15] (-.5,-.5) rectangle (1.5,1.5);
       \draw[draw=none,fill=blue,fill opacity=.15] (-.5,1.5) rectangle (1.5,2.5);
       \draw[draw=none,fill=Orange,fill opacity=.15] (1.5,-.5) rectangle (3.5,2.5);
  \end{tikzpicture}
  \caption{After Eventually Different forcing}
\end{figure}
\label{eforcing}
\end{theorem}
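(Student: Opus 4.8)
The plan is to follow the template of the Cohen and Hechler cases, isolating a single hard input. Write $\mathbb{E}$ for eventually different forcing and $e$ for its generic real.

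\emph{Left column.} It is standard (see \cite{BarJu95}) that $\mathbb{E}$ adds no dominating real over $W$; indeed $\mathbb{E}$ preserves the unboundedness of ground-model unbounded families, the relevant single-step fact being proved by exploiting, exactly as in the Hechler analysis, that two conditions of $\mathbb{E}$ with the same stem are compatible (so one reads off in $W$ a function, or an interval-slalom, that no $\mathbb{E}$-name for an element of $\omega^\omega$ can eventually dominate). Hence for every $x\in W[e]$ the inner model $W[x]\subseteq W[e]$ contains no dominating real over $W$, so $\mathcal B_{\leq_W}(\leq^*)=\emptyset$, and then $\mathcal B_{\leq_W}(\in^*)=\emptyset$ by Theorem \ref{propdi}. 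Since $e$ is by construction eventually different from every real of $W$, we have $e\in\mathcal B_{\leq_W}(\neq^*)$, which gives the strict inclusion $\emptyset=\mathcal B_{\leq_W}(\leq^*)\subsetneq\mathcal B_{\leq_W}(\neq^*)$ and establishes clause (1).

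\emph{Right column.} By Theorem \ref{propdi} we have $\mathcal D(\neq^*)\subseteq\mathcal D(\leq^*)\subseteq\mathcal D(\in^*)\subseteq\omega^\omega\setminus(\omega^\omega)^W$ and also $\mathcal B(\neq^*)\subseteq\mathcal D(\in^*)$, so it suffices to prove $\omega^\omega\setminus(\omega^\omega)^W\subseteq\mathcal D_{\leq_W}(\neq^*)$: this collapses the three $\mathcal D$-nodes and the top-right node to a single set and puts $\mathcal B(\neq^*)$ inside $\mathcal D(\neq^*)$. Fix a new real $x\in W[e]\setminus W$. Since any Cohen real over $W$ is equal infinitely often to every real of $W$, it is enough to show that $W[x]$ contains a Cohen real over $W$. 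By the Intermediate Model Theorem, $W[x]=W[G\cap\mathbb A]$ for some complete subalgebra $\mathbb A\in W$ of $\mathrm{RO}(\mathbb E)$, and $\mathbb A$ is nontrivial because $W[x]\neq W$. Thus the claim reduces to the statement that \emph{every nontrivial complete subalgebra of $\mathrm{RO}(\mathbb E)$ adds a Cohen real over $W$}.

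This last statement is the $\mathbb{E}$-analogue of Palumbo's theorem for Hechler forcing (Fact \ref{palumbo1}), and it is precisely here that the hypothesis is used. The plan is: since $\mathbb{E}$ is a Suslin ccc forcing, the assumption that every set of reals in $L(\mathbb{R})$ has the Baire property makes its complete subalgebras definable enough to run a Solovay-style analysis, after which one applies a Brendle--L\"owe-type ``Solovay characterization'' of when a forcing adds a Cohen real (in the spirit of \cite{BL11}, whose Corollary 13 was already invoked for Hechler) to each such subalgebra, using that the ``Cohen skeleton'' of $\mathbb{E}$ — concretely, $e\bmod 2$ is Cohen over $W$, and similarly for suitable restrictions of $e$ — survives the passage to an arbitrary complete subalgebra. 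I expect this to be the main obstacle: the difficulty is not that $\mathbb{E}$ adds a Cohen real, which is elementary, but that this is inherited by \emph{every} nontrivial intermediate model — a property that fails for general ccc forcings and which, even for Hechler, required Palumbo's nontrivial argument — with the regularity of $L(\mathbb{R})$ doing the work played there by large cardinals.

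\emph{Strictness of $\mathcal B(\neq^*)\subsetneq\mathcal D(\neq^*)$.} Having identified the right column with $\omega^\omega\setminus(\omega^\omega)^W$, it suffices to exhibit a new real outside $\mathcal B_{\leq_W}(\neq^*)$. Take $c=e\bmod 2$, which is Cohen-generic over $W$ and hence not in $W$. Then $W[c]$ is a Cohen extension of $W$, and by the analysis of Cohen forcing in the preceding subsection no Cohen extension contains a real eventually different from all of its ground model's reals; as this is an absolute property of $W[c]$, we get $c\notin\mathcal B_{\leq_W}(\neq^*)$ as computed in $W[e]$. Assembling the three parts yields exactly the constellation of Figure 7.
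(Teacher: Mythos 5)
Your proposal is correct and follows essentially the same route as the paper: the left column is emptied by the same stem-compatibility argument borrowed from the Hechler case, and the right column is collapsed by reducing to the Palumbo-style lemma that every nontrivial intermediate model between $W$ and $W[e]$ contains a Cohen real over $W$, with the $L(\mathbb R)$ Baire-property hypothesis doing the work exactly where you place it. The only substantive divergence is in how that key lemma is to be established -- the paper runs it through the Gitik--Shelah theorem that sufficiently definable $\sigma$-centered forcings add Cohen reals when the relevant filters in $L(\mathbb R)$ have the Baire property (noting that all subforcings of $\mathbb E$ qualify), rather than your ``Cohen skeleton survives to every complete subalgebra'' heuristic -- and your explicit strictness witness $e \bmod 2$ makes precise a step the paper leaves implicit.
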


This is proved in a way exactly analogous to Hechler forcing noting that $\mathbb{E}$ does not add a dominating real (see \cite[p. 385]{BarJu95}) and, assuming the large cardinal hypothesis, Palumbo's result about $\mathbb D$ stating that every subforcing of $\mathbb{D}$ adds a Cohen real can be extended to $\mathbb E$.

\begin{lemma}
Assume that every set of reals in $L(\mathbb R)$ has the property of Baire. Then in every nontrivial intermediate model between $W$ and $W[e]$ there is a real $c$ which is $\mathbb C$-generic over $W$.
\end{lemma}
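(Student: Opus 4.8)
The plan is to push through the blueprint of Palumbo's proof of Fact~\ref{palumbo1}, replacing the one step that genuinely needs a tree presentation of the forcing by an absoluteness argument powered by the large cardinal hypothesis. First I would make the routine reductions. Since $\mathbb E$ is $\sigma$-centered, hence ccc, and $W[e]$ is generated over $W$ by the single real $e$ (the generic filter is recoverable from $e$, just as in the Cohen and Hechler analyses above), the intermediate model theorem shows that any nontrivial intermediate model $M$ has the form $W[x]$ for a real $x\in W[e]\setminus W$; so it suffices to find, for each such $x$, a real $c\in W[x]$ which is Cohen over $W$. Fix a nice $\mathbb E$-name $\dot x$ for $x$ and a condition $p$ in the generic filter with $p\Vdash\dot x\notin\check W$; by ccc-ness $\dot x$ and $p$ are coded by reals of $W$.

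Part of the argument then goes through verbatim. To begin with, $\mathbb E$ itself adds a Cohen real over $W$: the parity sequence $n\mapsto e(n)\bmod 2$ is Cohen over $W$, by a density argument like the one used above for Cohen forcing --- given a condition and a finite binary pattern to realize, the (finite) side condition excludes only finitely many values at each new coordinate, so a value of the prescribed parity is always available. This alone does not give the lemma, however: the iteration of Cohen forcing followed by random forcing has a nontrivial intermediate model, the random extension of $W$, which contains no Cohen real over $W$. So the special structure of $\mathbb E$ must be used. For Hechler forcing, Palumbo uses it by passing to the tree forcing $\mathbb D_{\mathrm{tree}}\cong\mathbb D$, fusing to a condition on which $\dot x$ is read continuously off the generic branch, and then reading a Cohen real off the ``bushy'' splitting of that condition (all but finitely many immediate successors surviving at each splitting node, so the generic branch determines a partition of $\omega$ into finite blocks on which it is infinitely often equal to every real of $W$, i.e.\ a Cohen real). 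For $\mathbb E$ the side conditions are arbitrary reals, so the values excluded at a coordinate are uniform across a level rather than node-dependent, and there seems to be no tree presentation of $\mathbb E$, bushy or otherwise, to which the fusion could be applied.

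The substitute I would use is the hypothesis that every set of reals in $L(\mathbb R)$ has the Baire property, which --- being implied by large cardinals --- supplies the regularity and homogeneity needed for the ``Suslin-like'' forcing $\mathbb E$, whose conditions, dense sets, and forcing relation for arithmetic statements about $\dot x$ are all coded by reals. Concretely: (i) fix the natural Polish space of candidate $\mathbb E$-generic filters over $W$ lying below $p$, and use the Baire property of the relevant set in $L(\mathbb R)$ to see that the genuinely $W$-generic ones form a comeager subset, on which the map sending a generic $G$ to $\dot x[G]$ agrees with a Borel --- hence, off a meager set, continuous --- function $F$ coded in $W$; this is the absoluteness substitute for Palumbo's fusion. (ii) Since $p\Vdash\dot x\notin\check W$, the function $F$ is constant on no basic clopen set, so its fibres are meager; a Baire-category argument, again locating a comeager fully generic set via the hypothesis, then inverts $F$ modulo a meager set and recovers from $x$ a real over $W$ generic enough to compute, exactly as in the Cohen forcing section, a Cohen real over $W$ that lies in $W[x]$.

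The hard part will be making (i) and (ii) precise: one must check that the complete subalgebra $\mathbb A_x$ of the Boolean completion of $\mathbb E$ whose generic extension of $W$ is $W[x]$ retains enough of the definability of $\mathbb E$ for the Baire-property hypothesis to bite on the relevant sets, and that a failure of ``$W[x]$ contains a Cohen real over $W$'' would produce a set in $L(\mathbb R)$ lacking the Baire property. This is exactly where Palumbo's combinatorial argument, which sidesteps the difficulty for $\mathbb D$ through the tree forcing, has no counterpart for $\mathbb E$ --- the reason the lemma is obtained here only under a large cardinal hypothesis, and conjectured to hold outright in ZFC.
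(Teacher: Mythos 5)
Your opening reductions (ccc, intermediate model theorem, $M=W[x]$ for a real $x$, parity of $e$ is Cohen, and the observation that this alone cannot suffice because of the Cohen-then-random example) are all correct and consistent with what the paper does implicitly. But the substance of the lemma lives entirely in your steps (i) and (ii), which you yourself flag as ``the hard part,'' and as sketched they do not go through. The paper does not attempt a bare continuous-reading/Baire-category argument; it invokes a theorem of Gitik and Shelah (\cite[Proposition 4.3]{GS93}): a nontrivial, sufficiently definable \emph{$\sigma$-centered} forcing adds a Cohen real, provided certain associated filters in $L(\mathbb R)$ have the property of Baire. The route is then short: $\mathbb E$ is $\sigma$-centered, hence so is every complete subalgebra $\mathbb A_x$ of its Boolean completion with $W[x]=W^{\mathbb A_x}$; these subalgebras are ``sufficiently well defined'' in the Gitik--Shelah/Palumbo sense; so each nontrivial one adds a Cohen real over $W$, which is exactly the conclusion.

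The concrete defect in your (i)--(ii) is that $\sigma$-centeredness of $\mathbb A_x$ is never used. A Borel evaluation map $F$, continuous off a meager set and with meager fibres since $p\Vdash\dot x\notin\check W$, exists just as well for a nice name for a random real; yet the random extension of $W$ contains no Cohen real over $W$. So the very counterexample you raise against the parity shortcut also defeats your step (ii) as stated: ``inverting $F$ modulo a meager set'' cannot by itself recover anything Cohen-generic. Moreover, the premise of (i) --- that the genuinely $W$-generic filters form a comeager subset of a $W$-coded Polish space of candidates --- is itself the nontrivial, forcing-dependent assertion: it is true for Cohen-like forcings and false in the natural coding for the random algebra (the random reals over $W$ form a meager set). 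What makes it true for nontrivial subforcings of $\mathbb E$ is precisely the centering decomposition $\mathbb P=\bigcup_n P_n$ that the Gitik--Shelah argument exploits to manufacture a Cohen real from the trace of the generic on the pieces $P_n$, with the Baire property of the relevant filters in $L(\mathbb R)$ guaranteeing nontriviality. Without importing that theorem (or reproving its combinatorial core), your category argument proves too much and hence proves nothing; the missing ingredient is exactly the Gitik--Shelah result applied to the $\sigma$-centered, suitably definable subforcings of $\mathbb E$.
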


A proof of this is sketched in \cite[pg 38]{Pal13} for $\mathbb D$ but the reader will notice that it goes through equally well for $\mathbb E$. Indeed the centerpiece of the argument involves a fact, due to Shelah and Gitik \cite[Proposition 4.3]{GS93} that given any sufficiently well-defined $\sigma$-centered forcing $\mathbb P$, if certain filters of $\mathbb P$ in $L(\mathbb R)$ have the property of Baire, then $\mathbb P$ will add a Cohen real. It is not hard to see from the combination of the Gitik-Shelah and the Palumbo arguments that \say{sufficiently well defined} includes all subforcings of $\mathbb E$. Thus, assuming all sets of reals have the property of Baire the result goes through.

Using this lemma, by the same argument given for $\mathbb D$, we have the proof of Theorem \ref{eforcing}.

The use of large cardinals here is unfortunate and I hope it can be improved on. The result for $\mathbb D$ (that avoids large cardinals) uses the tree version of Hechler forcing and I do not know of an analogous one for $\mathbb E$. Let me note however that even without large cardinals I have shown that there is a model realizing the cut determined by $\mathcal B(\in^*) = \mathcal B(\neq^*) = \emptyset$.

\subsection{Localization Forcing}
In this section I study {\em Localization forcing}, the forcing to add a generic slalom capturing all ground model reals. This section is the only forcing containing essentially new results to the best of my knowledge.
\begin{definition}[Localization Forcing (cf \cite{BL11})]
The localization forcing $\mathbb{LOC}$ is defined as the set of pairs $(s, \mathcal F)$ such that $s \in ([\omega]^{<\omega})^{< \omega}$ is a finite sequence with $|s(n)| \leq n$ for all $n < |s|$ and $\mathcal F$ is a a finite family of functions in Baire space with $|\mathcal F| \leq |s|$. The order is $(t, \mathcal G) \leq_{\mathbb{LOC}} (s, \mathcal F)$ if and only if $t \supseteq s$, $\mathcal G \supseteq \mathcal F$ and $f(n) \in t(n)$ for all $f \in \mathcal F$ and all $n \in |t| \setminus |s|$. We think of the first component as a finite approximation to a slalom we are trying to build and as such I will often refer to the length of the sequence as its \say{domain} and write ${\rm dom}(s)$.
\end{definition}

Unfortunately I do not have a full characterization of the diagram in the case of $\mathbb{LOC}$. The following theorem summarizes the state of knowledge.
\begin{theorem}
Let $\sigma$ be a slalom which is $\mathbb{LOC}$-generic over $W$. Then in $W[\sigma]$ all the nodes in the diagram are non-empty and we have that $\mathcal B(\in^*)$ is a proper subset of $\mathcal B(\leq^*)$ and $\mathcal D(\neq^*)$. Also $\mathcal B(\leq^*) \subsetneq \mathcal B(\neq^*)$ and $\mathcal D (\leq^*) \subsetneq \mathcal D(\in^*)$. In particular, Figure 12 is a partial diagram for $\mathbb{LOC}$. 
\begin{figure}[h]\label{Figure.Cichon-basic}
\centering
  \begin{tikzpicture}[scale=1.5,xscale=2]
     \draw (0,0) node (empty) {$\emptyset$}
           (1,0) node (Bin*) {$\mathcal B(\in^*)$}
           (1,1) node (Bleq*) {$\mathcal B(\leq^*)$}
           (1,2) node (Bneq*) {$\mathcal B(\neq^*)$}
           (2,0) node (Dneq*) {$\mathcal D(\neq^*)$}
           (2,1) node (Dleq*) {$\mathcal D(\leq^*)$}
           (2,2) node (Din*) {$\mathcal D(\in^*)$}
           (3,2) node (all) {$\omega^\omega\setminus(\omega^\omega)^W$}
           (2.5, .5) node[fill=white] (?) {$?$}
           (2.5, 1.5) node[fill=white] (?') {$?$}
           ;
     \draw[->,>=stealth]
            (empty) edge (Bin*)
            (Bin*) edge (Bleq*)
            (Bleq*) edge (Bneq*)
            (Bin*) edge (Dneq*)
            (Bleq*) edge (Dleq*)
            (Bneq*) edge (Din*)
            (Dneq*) edge (Dleq*)
            (Dleq*) edge (Din*)
            (Din*) edge (all)
            ;
        \draw[thick,dashed,OliveGreen] (-.5,.5) -- (2.45,.5);
        \draw[thick,dashed,OliveGreen] (2.55,.5) -- (3.5,.5);
       \draw[thick,dashed,OliveGreen] (1.5,-.5) -- (1.5,2.5);
       \draw[thick,dashed,OliveGreen] (-.5,1.5) -- (.55,1.5);
       \draw[thick,dashed,OliveGreen] (.55,1.5) -- (2.43,1.5);
       \draw[thick,dashed,OliveGreen] (2.55,1.5) -- (3.5,1.5);
       \draw[thick,dashed,OliveGreen] (.5,-.5) -- (.5,.5);
       \draw[thick,dashed,OliveGreen] (2.5,1.65) -- (2.5,2.5);
       \draw[OliveGreen] (-.5,-.5) rectangle (3.5,2.5);
       \draw[draw=none,fill=yellow,fill opacity=.15] (-.5,-.5) rectangle (.5,.5);
       \draw[draw=none,fill=green,fill opacity=.15] (.5,-.5) rectangle (1.5,.5);
       \draw[draw=none,fill=blue,fill opacity=.15] (-.5,.5) rectangle (1.5,1.5);
        \draw[draw=none,fill=red,fill opacity=.15] (-.5,1.5) rectangle (1.5,2.5);
       \draw[draw=none,fill=orange,fill opacity=.15] (1.5, 1.5) rectangle (2.5,2.5);
  \end{tikzpicture}
  \caption{Partial diagram after Localization forcing}
  \label{locpic}
\end{figure}
\label{locthm}
\end{theorem}

Proving this theorem amounts to showing that $\mathbb{LOC}$ adds $\mathbb B$, $\mathbb D$ and $\mathbb E$ generics. I start with $\mathbb D$. Notice first that $\mathbb{LOC}$ adds a dominating real. Indeed if $\sigma$ is a generic slalom in $W^{\mathbb{LOC}}$ then $d(n) :={\rm max} \; \sigma (n)$ has this property. This is actually a Hechler real:
\begin{lemma}
Let $\sigma \in W^\mathbb{LOC}$ be a generic slalom eventually capturing all ground model reals. Then, $d(n):={\rm max}\; \sigma (n)$ is $\mathbb D$-generic over $W$.
\end{lemma}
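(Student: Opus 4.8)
The plan is to build, inside $W$, a projection $\pi\colon\mathbb{LOC}\to\mathbb D$ that carries the generic slalom to $d$. Recall the standard fact that if $\pi$ is order-preserving and has the projection property — for every $(s,\mathcal F)\in\mathbb{LOC}$ and every $q\leq_{\mathbb D}\pi(s,\mathcal F)$ there is $(t,\mathcal G)\leq_{\mathbb{LOC}}(s,\mathcal F)$ with $\pi(t,\mathcal G)\leq_{\mathbb D}q$ — then the upward closure of $\pi[G]$ is a $\mathbb D$-generic filter over $W$ whenever $G$ is $\mathbb{LOC}$-generic over $W$. Hence it suffices to produce such a $\pi$ with $\mathrm{stem}\big(\pi(s,\mathcal F)\big)=\big(n\mapsto\max s(n)\big)\!\upharpoonright\!\mathrm{dom}(s)$; the Hechler real of the induced generic filter is then $\bigcup\{\mathrm{stem}(\pi(s,\mathcal F)):(s,\mathcal F)\in G\}=\big(n\mapsto\max\sigma(n)\big)=d$, since genericity makes the lengths of the slalom approximations cofinal in $\omega$.

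The natural map is $\pi(s,\mathcal F)=\big(\,(n\mapsto\max s(n))\!\upharpoonright\!\mathrm{dom}(s),\ \{f':f\in\mathcal F\}\,\big)$, where $f'(n)=\max(f(n)-1,0)$. Shifting the side conditions down by one is a convenience: the $\mathbb{LOC}$-ordering only forces $f(n)\in t(n)$, hence $\max t(n)\geq f(n)$, at new coordinates, whereas the Hechler ordering asks for a strict inequality there, and the shift supplies it automatically, so $\pi$ is order-preserving. (One could instead work with the forcing-equivalent non-strict variant of $\mathbb D$; the strictness is not a real issue.)

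The substance is the projection property. Given $(s,\mathcal F)$ and $q=(p',\mathcal F')\leq_{\mathbb D}\pi(s,\mathcal F)$, I would reduce — modulo one boundary case discussed below — to the situation $\mathrm{dom}(s)>|\mathcal F|$ and $\mathrm{dom}(p')$ large, and then put $t\!\upharpoonright\!\mathrm{dom}(s)=s$ and, for $\mathrm{dom}(s)\leq n<\mathrm{dom}(p')$, $t(n)=\{f(n):f\in\mathcal F\}\cup\{p'(n)\}$. Here $|t(n)|\leq|\mathcal F|+1\leq n$ is exactly where the reduction $\mathrm{dom}(s)>|\mathcal F|$ is used, and $\max t(n)=p'(n)$ follows from the inequalities $p'(n)\geq f(n)$ ($f\in\mathcal F$) built into $q\leq_{\mathbb D}\pi(s,\mathcal F)$. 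Taking $\mathcal G=\mathcal F\cup\{h+1:h\in\mathcal F'\}$, one checks directly that $(t,\mathcal G)\in\mathbb{LOC}$, that $(t,\mathcal G)\leq_{\mathbb{LOC}}(s,\mathcal F)$, and that $\pi(t,\mathcal G)\leq_{\mathbb D}q$: its stem extends $p'$ and its side-condition family contains $\mathcal F'$ because $(h+1)'=h$.

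The one point that genuinely needs care is this construction of $t$, where three demands must be met at once — $\max t(n)$ should equal the prescribed Hechler value $p'(n)$, $|t(n)|$ should stay $\leq n$, and each $f(n)$ for $f\in\mathcal F$ should remain inside $t(n)$ to respect the $\mathbb{LOC}$-ordering. The slack that reconciles them is the gap between $|\mathcal F|$ and $\mathrm{dom}(s)$; in particular the boundary coordinate $n=|\mathcal F|$, which arises when $\mathrm{dom}(s)=|\mathcal F|$, must be handled by a preliminary extension of $\pi(s,\mathcal F)$ that pins the stem value there to $\max\{f(n):f\in\mathcal F\}$ — the one value the slalom is simultaneously forced to contain and capped at — before descending into the dense set. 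Everything else is routine: order-preservation of $\pi$, the identification of the Hechler real with $n\mapsto\max\sigma(n)$, and the push-forward of a generic filter along a projection.
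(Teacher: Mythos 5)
Your proof is correct and follows essentially the same route as the paper's: both arguments exhibit a projection from $\mathbb{LOC}$ onto (a variant of) $\mathbb D$ whose stem component is $n \mapsto \max s(n)$ and push the generic filter forward along it. The only substantive differences are cosmetic --- the paper passes to a single-function, non-strict presentation of $\mathbb D$ with side condition the pointwise sum $\Sigma \mathcal F$, where you keep the finite-family strict version and shift each $f$ down by one --- and you are in fact more careful than the paper at the boundary coordinate $n = {\rm dom}(s) = |\mathcal F|$, where the paper's assertion that $|\mathcal F| < n$ for all new $n$ is not quite justified and your preliminary pinning of the stem value there is the right repair.
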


To prove this I will need a simplified version of $\mathbb D$: in the first component of a condition I will assume that the domain is a finite initial segment of $\omega$ and instead of having the second component of a condition of $\mathbb D$ be a finite family of functions, it will be a single function. Then $(q, g) \leq_\mathbb D (p, f)$ if and only if $q$ extends $p$, for all $n \in {\rm dom}(q) \setminus {\rm dom}(p)$, $q(n) \geq f(n)$ and for all $n \in \omega$, and $g(n) \geq f(n)$. It's not hard to see that this version of $\mathbb D$ is forcing equivalent to the original one I defined.

\begin{proof}
Recall that a {\em projection} $\pi:\mathbb P \to \mathbb Q$ between two posets is an order preserving map which sends the maximal element of $\mathbb P$ to the maximal element of $\mathbb Q$ and for all $p \in \mathbb P$ and all $q \leq \pi(p)$ there is some $\overline{p} \leq p$ such that $\pi(\overline{p}) \leq q$. If a projection exists between $\mathbb P$ and $\mathbb Q$ then the image $\pi '' G$ of a $\mathbb P$-generic filter generates a $\mathbb Q$-generic filter. Therefore to prove the lemma it suffices to show that the map $\pi:\mathbb{LOC} \to \mathbb D$ such that $\pi(s, \mathcal F) = (n \mapsto {\rm max} \; s(n), \Sigma \mathcal F)$ where $\Sigma \mathcal F$ is the pointwise sum, is a projection. To see why, note that if $(s, \mathcal F) \in \mathbb{LOC}$ and let, for all $n \in {\rm dom} (s)$, $p(n) = {\rm max} \; s(n)$ and let $f = \Sigma \mathcal F$. Since $\mathcal F$ is finite this is well defined. Then the pair $(p, f)$ is a $\mathbb D$ condition and the union of all conditions such defined from elements of the $\mathbb{LOC}$ generic defining $\sigma$ is the $d$ from the statement of the lemma. 

It is routine to check that $\pi(1_{\mathbb{LOC}}) =  1_\mathbb D$ and that the map $\pi$ is order preserving. The difficulty is in verifying the third condition of projections. To this end, let $(s, \mathcal F) \in \mathbb{LOC}$ and let $(p, f)=\pi(s, \mathcal F)$. Let $(p', f') \leq (p, f)$ and let $D \subseteq \mathbb D$ be a set of conditions which is dense below $(p', f')$. It suffices to find a strengthening $(t, \mathcal G)$ of $(s, \mathcal F)$, such that $(n \mapsto {\rm max} \; t(n), \Sigma G) \in D$. To do this, first, find a function $g:\omega \to \omega$ such that for all $n \notin {\rm dom}(p)$, $g(n) > n + f$ and otherwise is at least as big as $f$. Then, $(p, g)$ strengthens $(p, f)$ and is compatible with $(p', f')$. Let $(q, h) \in D$ strengthen $(p, g)$. 

Now, we can build our new $\mathbb{LOC}$ condition. Define $H:\omega \to \omega$ by $H(n) = h(n) - f(n)$. Notice that since $g(n)$ was assumed to be bigger than $f(n)$ for all $n$ and $h(n) \geq g(n)$ since it is a strengthening it follows that $H$ is in fact always positive. Moreover, $f + H = \Sigma \mathcal F + H = h$. It remains to show that there is a $t \supseteq s$ such that ${\rm dom}(t) = {\rm dom}(q)$, for all $n \in {\rm dom}(t)$, ${\rm max} \; t(n) = q(n)$ and for all $n \in {\rm dom}(t) \setminus {\rm dom}(s)$ and all $f\in \mathcal F$, $f(n) \in t(n)$. Once this has been done $(t, \mathcal F \cup \{H\})$ will be the desired condition. I claim that this is all possible. I will describe a $t$ extending $s$ be defined on the domain of $q$ (by construction, the domain of $q$ contains that of $s$). Without loss of generality $|{\rm dom}(q)| > |{\rm dom}(s)| + 2$. Thus, the domain of $t$ will be large enough to accomodate the side condition $\mathcal F \cup \{H\}$. Let $|\mathcal F| = k$ and enumerate $\mathcal F = \{f_0,...,f_{k-1}\}$. Note that $k < n$ for all $n \in {\rm dom}(q) \setminus {\rm dom}(s)$. Now, for each $n \in {\rm dom}(q) \setminus {\rm dom}(s)$, let me define $t(n)$. Notice first that one must put in all $k$ numbers $\{f_0(n),...,f_{k-1}(n)\}$ and we also want ${\rm max} \; t(n) = q(n)$ so add this in too. Since $n > k$, one needs to simply add $n -k-1$ additional numbers $\{j_0,...,j_{n-k-2}\}$ such that each one is less than $q(n)$ and different from all numbers in the set $\{f_0(n),...,f_{k-1}(n), q(n)\}$. This is possible however, since by construction $q(n) \geq g(n)$ for all $n \notin {\rm dom}(p)$ and $g(n) > n + \Sigma_{i < k} f_i(n)$ on this domain. Thus, there must be at least $n$ between the maximum of the $f_i(n)$'s and $q(n)$, which is more than we needed.
\end{proof}

Now, I show that $\mathbb{LOC}$ adds an $\mathbb E$-generic real. This fact was first told to me (without proof) in private communication with J. Brendle. I thank him for pointing it out to me.

\begin{lemma}
The forcing $\mathbb{LOC}$ adds an $\mathbb E$-generic real.
\end{lemma}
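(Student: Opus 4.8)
The plan is to imitate the previous lemma (the $\mathbb D$ case) and exhibit a projection witnessing that $\mathbb E$ is a quotient of the forcing, but one extra ingredient is needed: a genuinely $\mathbb E$-generic real exceeds the identity infinitely often, so it cannot be read off the widths of $\sigma$ by any fixed rule (e.g.\ $n\mapsto\min(\omega\setminus\sigma(n))$ stays below $n$ and is not $\mathbb E$-generic). I would therefore first pass to the enlarged poset $\mathbb{LOC}'$ whose conditions are triples $(s,e,\mathcal F)$ with $(s,\mathcal F)\in\mathbb{LOC}$ and $e$ a sequence of length $|s|$ satisfying $e(n)\notin s(n)$ for all $n<|s|$, ordered by $(t,e',\mathcal G)\le(s,e,\mathcal F)$ iff $(t,\mathcal G)\le_{\mathbb{LOC}}(s,\mathcal F)$, $e'\supseteq e$, and $e'(n)\notin t(n)$ for all $n\in[|s|,|t|)$. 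The forgetful map $(s,e,\mathcal F)\mapsto(s,\mathcal F)$ is a projection of $\mathbb{LOC}'$ onto $\mathbb{LOC}$ — adjoining $e$ is just a finite-support product of copies of Cohen forcing over $W^{\mathbb{LOC}}$, so $\mathbb{LOC}'\equiv\mathbb{LOC}\ast\dot{\mathbb C}$ — while $\rho(s,e,\mathcal F):=(e,\mathcal F)$ is order preserving into $\mathbb E$, because a $\mathbb{LOC}'$-strengthening forces each new $t(n)$ to contain $f(n)$ for all $f\in\mathcal F$ while keeping $e'(n)\notin t(n)$, so $e'(n)\ne f(n)$.

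The substance of the argument is that $\rho$ is a projection. Given $(s,e,\mathcal F)$ and an $\mathbb E$-condition $(e',\mathcal G)\le(e,\mathcal F)$ — and, thinning to a dense set, I may take $\operatorname{dom}(e')$ an initial segment — I would strengthen $(s,e,\mathcal F)$ in stages. First extend to $(s_1,e',\mathcal F)$ by setting $s_1(n):=\{f(n):f\in\mathcal F\}$ on $[|s|,\operatorname{dom}(e'))$: this is legal since $|\mathcal F|\le|s|\le n$ and since $e'(n)\ne f(n)$ there forces $e'(n)\notin s_1(n)$. Next, still with side family $\mathcal F$, run the length out past $|\mathcal G|$, again filling the slalom with the $\mathcal F$-values and choosing the new entries of $e$ to avoid the finitely many values of $\mathcal G$ as well. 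Finally, now that the length exceeds $|\mathcal G|$, adjoin $\mathcal G$ to the side family and on each remaining fresh coordinate $n$ put $t(n):=\{f(n):f\in\mathcal G\}$ and let $\bar e(n)$ be \emph{any} natural number outside the finite set $t(n)$. The resulting triple lies below $(s,e,\mathcal F)$ in $\mathbb{LOC}'$ and $\rho$ of it is $\le(e',\mathcal G)$. The point that goes beyond the $\mathbb D$ argument is exactly this last free choice: the auxiliary coordinate lets the extracted eventually different real jump to arbitrarily large values on fresh coordinates while still missing the finitely many forced values, so $\rho$ reaches every $\mathbb E$-condition, which no width-bounded function of $\sigma$ could.

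Hence $W^{\mathbb{LOC}'}$ contains an $\mathbb E$-generic real over $W$, so a Cohen extension of $W^{\mathbb{LOC}}$ does. The step I expect to be the real obstacle is pulling this real down into $W^{\mathbb{LOC}}$ itself — i.e.\ showing that $\mathbb{LOC}$ absorbs Cohen forcing, $\mathbb{LOC}\ast\dot{\mathbb C}\equiv\mathbb{LOC}$. This is plausible because $\mathbb{LOC}$ already adds a Cohen real (the parity of $\max\sigma(n)$ can always be steered freely), and it is in the same spirit as the absorption $\mathbb D\cong\mathbb C\ast\dot{\mathbb D}$ used earlier; alternatively one can try to present the $\mathbb E$-generic built above as definable from $\sigma$ alone. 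The rest parallels the $\mathbb D$ projection together with Palumbo's factorization (Fact~\ref{palumbo1}), and with the lemma in hand the partial diagram of Theorem~\ref{locthm} follows by the reasoning already used for $\mathbb D$ and $\mathbb E$ separately.
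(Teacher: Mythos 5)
Your diagnosis of the difficulty is sharp --- an $\mathbb E$-generic real is unbounded over $W$, so no selector of the form $n \mapsto (\text{a canonically chosen element of } \omega \setminus \sigma(n))$ that is trapped below a fixed ground-model bound can be $\mathbb E$-generic --- and your route is genuinely different from the paper's. The paper defines a projection $\pi \colon \mathbb{LOC} \to \mathbb E$ directly on $\mathbb{LOC}$ itself, reading the stem off the slalom by the rule ``$p_s(n)$ is the $k$-th element of $\omega \setminus s(n)$, where $\Sigma s(n) \equiv k \pmod n$,'' so that the residue of the sum of the entries of $s(n)$ encodes which omitted value to output; no auxiliary coordinate is introduced. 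Your construction of $\mathbb{LOC}'$ and the verification that $\rho(s,e,\mathcal F) = (e,\mathcal F)$ is a projection onto $\mathbb E$ are fine as far as they go, and the ``last free choice'' observation is exactly what makes the extracted real unbounded.

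The gap is the final reduction, which you flag yourself but do not close: what you have actually proved is that $\mathbb{LOC} \ast \dot{\mathbb C}$ adds an $\mathbb E$-generic over $W$, while the lemma is about $\mathbb{LOC}$ alone. The absorption $\mathbb{LOC} \ast \dot{\mathbb C} \cong \mathbb{LOC}$ is not established, and the heuristic you offer does not yield it: the fact that $\mathbb{LOC}$ adds a Cohen real gives, via the intermediate model theorem and $\mathbb C \ast \dot{\mathbb C} \cong \mathbb C$, a factorization $\mathbb{LOC} \cong \mathbb C \ast \dot{\mathbb Q} \cong \mathbb C \ast \dot{\mathbb C} \ast \dot{\mathbb Q}$, i.e.\ absorption of a Cohen real \emph{below} the remaining quotient, whereas you need a Cohen real over the full $\mathbb{LOC}$-extension; the second Cohen factor cannot simply be commuted past $\dot{\mathbb Q}$. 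Until either that absorption is proved or the $\mathbb E$-generic is exhibited as definable from $\sigma$ alone --- which is what the paper's mod-$n$ coding is designed to accomplish, and is precisely where the unboundedness issue you raise must be confronted, since a $k$-th omitted element with $k < n$ and $|s(n)| \leq n$ never exceeds $2n$ --- your argument establishes only the weaker statement about $\mathbb{LOC} \ast \dot{\mathbb C}$.
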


\begin{proof}
Given a condition $(s, \mathcal F) \in \mathbb{LOC}$ define a stem for an $\mathbb E$-condition as $p_s : {\rm dom} (s) \to \omega$ by letting for all $n \in {\rm dom}(s)$ $p_s(n)$ be equal to the $k^{\rm th}$ natural number $m$ not in the set $s(n)$ where the pointwise sum $\Sigma s(n) \equiv k \; {\rm mod} \; n$. We claim that the map $\pi:\mathbb{LOC} \to \mathbb{E}$ defined by $\pi(s, \mathcal F) = (p_s, \mathcal F)$ is a projection. Clearly the maximal condition is sent to the maximal condition and this map is order preserving. Let $(s, \mathcal F) \in \mathbb{LOC}$, and let $(q, \mathcal G) \leq_\mathbb E (p_s, \mathcal F)$. We need to show that there is a strengthening of $(q, \mathcal G)$ in the image of $\pi$. To this end, note that we can assume with out loss that $|\mathcal G| < {\rm dom}(q)$ since otherwise we can strengthen to make this true. Now, define a partial slalom as follows: $s_q:{\rm dom}(q) \to [\omega]^{<\omega}$. For $n \in {\rm dom}(p)$ let $s_q(n) = s(n)$. For $n \notin {\rm dom}(p)$ let $q(n) = m$ and suppose that $m$ is the $k^{\rm th}$ not in $\{f(n) \; |\; f \in \mathcal F\}$ and suppose that this set has size $l < n$ (the $<$ follows from the fact that $(p, \mathcal F)$ is in the image of $\pi$). Then, pick $n-l$ numbers $m_{l}, m_{l+1},...,m_{n-1}$ all greater than every $f(n)$ for $f \in \mathcal F$ and not equal to $m$ so that $\Sigma_{f \in \mathcal f} f(n) + \Sigma_{i=l}^{n-1} m_i \equiv k \; {\rm mod}\; n$. This can be accomplished, for instance, as follows: if $\Sigma_{f \in \mathcal F} f(n) \equiv j \; {\rm mod} \; n$ then let $m_l \equiv k - j \; {\rm mod} \; n$ greater than all the $f(n)$'s and let all other $m_i$'s be multiples of $n$. Finally let $s_q(n) = \{f(n) \; | \; f \in \mathcal F\} \cup \{m_{l},...,m_{n-1}\}$. Then $(s_q, \mathcal G) \leq (s, \mathcal F)$ and $\pi(s_q, \mathcal G) = (q, \mathcal G)$ as needed.
\end{proof}

Finally,
\begin{lemma}
Any forcing adding a slalom eventually capturing all ground model reals adds a random real. In particular $\mathbb{LOC}$ adds a random real.
\end{lemma}

\begin{proof}
By Corollary 3.2 of \cite{ShJdKM} adding a slalom eventually capturing all ground model reals is equivalent to adding a Borel null set which covers all Borel null sets coded in the ground model. Let $N \subseteq \omega^\omega$ be such a null set and let $y \notin N$. Then $y$ is not in any ground model null set so $y$ is a random real. 
\end{proof}

Combining all of these results then proves Theorem \ref{locthm} since both $\mathbb D$ and $\mathbb E$ add Cohen reals realizing the split down the middle in Figure \ref{locpic} and $\mathbb B$ adds a bounded real not caught in any old slalom so $\mathcal D(\leq^*)$ is strictly contained in $\mathcal D(\in^*)$.

As an aside notice that there seem to be other eventually different reals added by $\mathbb{LOC}$:
\begin{observation}
Let $\sigma \in W^\mathbb{LOC}$ be a generic slalom eventually capturing all ground model reals. Let $a(n)$ be defined as the least $k \notin \sigma (n)$. Then $a$ is a real which is eventually different from all ground model reals but is not an $\mathbb E$-generic real.
\label{evdiffloc}
\end{observation}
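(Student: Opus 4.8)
The plan is to verify the two halves of the statement separately. In both, I note first that $a \leq_T \sigma$, so $a$ is a genuine real of $W[\sigma]$, and that $a(n)$ is well defined because each $\sigma(n)$ is finite.

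\emph{Eventual difference.} Fix $f \in (\omega^\omega)^W$. Since $\sigma$ is $\mathbb{LOC}$-generic it eventually captures $f$, so there is a $k$ with $f(n) \in \sigma(n)$ for all $n > k$. But $a(n) = \min(\omega \setminus \sigma(n)) \notin \sigma(n)$ by definition, so $a(n) \neq f(n)$ for all $n > k$; that is, $a$ is eventually different from $f$. As $f$ was arbitrary, this gives the first assertion. The only thing to keep straight is that ``eventually different'' and ``eventually captures'' are both cofinite-quantifier statements, so they line up directly.

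\emph{Non-genericity.} Here the key is a growth bound built into the definition of $\mathbb{LOC}$: approximations $s$ satisfy $|s(n)| \leq n$, hence $|\sigma(n)| \leq n$ for every $n$. So among the $n+1$ numbers $0, 1, \dots, n$ at least one is absent from $\sigma(n)$, whence $a(n) \leq n$ for all $n$; in particular $a \leq^* \mathrm{id}$, so $a$ is eventually dominated by the real $\mathrm{id} \in W$. On the other hand, every $\mathbb{E}$-generic real $e$ over $W$ is unbounded over $W$, by a one-line density argument: given $g \in (\omega^\omega)^W$, $N \in \omega$, and a condition $(p,\mathcal{F})$, pick a coordinate $m > N$ outside $\mathrm{dom}(p)$ and extend $p$ past $m$, at each new coordinate choosing a value different from the finitely many values $f(\cdot)$ with $f \in \mathcal{F}$, and at $m$ additionally choosing one $> g(m)$; the conditions so obtained are dense below $(p,\mathcal{F})$, so $e(m) > g(m)$ for infinitely many $m$. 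Taking $g = \mathrm{id}$ contradicts $a(n) \leq n$ for all $n$, so $a$ is not $\mathbb{E}$-generic over $W$.

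I do not expect a serious obstacle: the statement is an observation and the content is entirely in isolating the right invariant, namely boundedness over $W$, which separates $a$ from every $\mathbb{E}$-generic real, together with the remark that the slalom-size constraint $|\sigma(n)| \leq n$ in $\mathbb{LOC}$ forces $a(n) \leq n$. The remaining density computation is of the same routine kind already used to locate the $\mathbb{E}$-generic added by $\mathbb{LOC}$ in the preceding lemma.
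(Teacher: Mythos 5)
Your proof is correct and follows essentially the same route as the paper: eventual difference comes directly from $\sigma$ eventually capturing each ground-model real while $a(n)\notin\sigma(n)$ by construction, and non-genericity comes from the size bound $|\sigma(n)|\leq n$ forcing $a(n)\leq n$, so that $a$ is bounded over $W$ whereas every $\mathbb{E}$-generic is unbounded. The only difference is that you spell out the density argument for unboundedness of $\mathbb{E}$-generics, which the paper leaves implicit.
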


\begin{proof}
First notice that the $a$ described in the theorem is in fact eventually different from all ground model reals since every real eventually is captured by $\sigma$ and after that point $a$ is different from it. Moreover, notice that $a$ is not only not dominating over the ground model reals but actually not even unbounded since, given any real $f \in W$ growing faster than the identity ($n \mapsto n+2$ even), the least $k$ not in $\sigma (n)$ must be less than $f(n)$ since $|\sigma(n)| = n$. From this it follows that $a$ is not an $\mathbb E$-generic since it is not unbounded. 
\end{proof}

This lemma is somewhat surprising and indeed I do not know exactly what the forcing adding the real $a$ is or if it is a previously studied notion. In particular, I don't know if this real is random over $W$, though I conjecture that it is.

\subsection{Cuts in the Diagram and the Analogy with Cardinal Characteristics}
Let me finish this section by noting that it follows from what I have shown that the ZFC($W$)-provable subset implications implied by Theorem \ref{propdi} are the only ones. In other words, Theorem \ref{mainthm2} is proved. Indeed a simply inspection of the diagrams above show that every implication shown in Figure 1 is consistently strict and no other implications are true in every $V$ extending $W$. This shows also that the analogue discussed in the previous section holds in a robust way with the traditional Cicho\'n diagram. In fact, we can actually show that a stronger fact is true.

\begin{theorem}
All cuts consistent with the diagram are consistent with ZFC($W$) in the following sense: Given any collection $N$ of (not $\emptyset$)-nodes in the diagram which are closed upwards under $\subseteq$ there is a proper forcing $\mathbb P$ in $W$ so that forcing with $\mathbb P$ over $W$ results in all and only the nodes in $N$ being nonempty. See Figure \ref{allcuts} for a pictoral representation.
\end{theorem}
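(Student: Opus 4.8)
The plan is to realize every admissible constellation by a single forcing, or a short iteration, from among those already analyzed in this section, leaning on standard preservation theorems to keep the unwanted nodes empty. Two preliminary observations organize the argument. First, by Theorem \ref{propdi} the family of nonempty nodes in any extension is automatically closed upwards under $\subseteq$, so the collections $N$ named in the statement are exactly those that can occur. Second, nonemptiness of a node is monotone under forcing over $W$: if $x$ witnesses membership in a node via some $y \in W[x]$, then, since the reals of $W$ are unchanged, $y$ remains a witness for $x$ in any further extension. Hence, for the nodes in $N$, it suffices to make each of them nonempty at \emph{some} stage of an iteration, and the real work is keeping the nodes \emph{outside} $N$ empty.

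The reachability poset on the seven non-$\emptyset$ nodes has only finitely many up-sets, and all but two of the nonempty ones are realized verbatim by forcings from this section: the trivial (or a countably closed) forcing gives $N = \emptyset$; Sacks forcing gives $N = \{\omega^\omega \setminus (\omega^\omega)^W\}$; and random, Miller, Cohen, eventually different, Laver, Hechler and localization forcing give, respectively, the constellations computed in Theorem \ref{randomthm}, Theorem \ref{miller}, the theorem on Cohen forcing, Theorem \ref{eforcing}, Theorem \ref{laver}, Theorem \ref{hechler} and Theorem \ref{locthm}. (Each such forcing is built from the reals of $W$, hence lies in $W$, and is proper.) Exactly two up-sets remain to be treated by hand: $N_1 = \{\mathcal D(\leq^*), \mathcal D(\in^*), \omega^\omega \setminus (\omega^\omega)^W\}$ and $N_2 = \{\mathcal D(\in^*), \omega^\omega \setminus (\omega^\omega)^W\}$.

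For $N_1$ I would force with $\mathbb{PT} * \dot{\mathbb B}$, a Miller real $m$ followed by a random real $r$. Monotonicity makes $\mathcal D(\leq^*)$ (from $m$), $\mathcal D(\in^*)$ and the top node nonempty. Since $\mathbb B$ is $\omega^\omega$-bounding (Fact \ref{factB}), a real of $W[m][r]$ dominating $(\omega^\omega)^W$ would be dominated by a real of $W[m]$ dominating $(\omega^\omega)^W$, which $W[m]$ cannot contain since $\mathcal B(\leq^*)$ is empty there by Theorem \ref{miller}; hence $\mathcal B(\leq^*) = \mathcal B(\in^*) = \emptyset$ in $W[m][r]$. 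Finally $\mathcal D(\neq^*) = \emptyset$ there: both $\mathbb{PT}$, via the Laver property, and $\mathbb B$ preserve the assertion that $(\omega^\omega)^W$ is a $\mathrm{cov}(\mathcal M)$-witness, i.e. that no real is infinitely often equal to every real of $W$, and this passes to the two-step iteration by the standard $\mathrm{cov}(\mathcal M)$-preservation theorems (see e.g. \cite{BarJu95}, Chapter 6). For $N_2$ the point is to produce a single proper $\omega^\omega$-bounding forcing $\mathbb Q \in W$ which adds a real escaping every ground-model slalom, yet adds no eventually different real and keeps $(\omega^\omega)^W$ non-meager; then $\omega^\omega$-boundingness makes $\mathcal D(\leq^*), \mathcal D(\neq^*), \mathcal B(\leq^*), \mathcal B(\in^*)$ empty, non-meagerness makes $\mathcal B(\neq^*) = \emptyset$, and the escaping real witnesses $\mathcal D(\in^*) \neq \emptyset$. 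Such a $\mathbb Q$ can be built as a bounded tree forcing with norms, a ``small'' relative of $\mathbb{LOC}$, arranged so that the generic branch, while dominated by a ground-model function, lands outside each ground-model slalom infinitely often; a fusion argument gives properness and $\omega^\omega$-boundingness, and the fact that the escape is only infinitely often, not eventual, is what prevents the generic, or anything built from it, from being eventually different from all of $(\omega^\omega)^W$.

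Assembling the pieces, an arbitrary up-closed $N$ is one of the finitely many up-sets of the reachability poset, and for each of them a proper forcing in $W$ realizing exactly $N$ has been named: the nine forcings listed above, together with $\mathbb{PT} * \dot{\mathbb B}$ and $\mathbb Q$. The main obstacle is the construction and verification of $\mathbb Q$ for $N_2$, an $\omega^\omega$-bounding forcing that raises the $\mathcal D(\in^*)$-node while leaving $\mathcal B(\neq^*)$ trivial, which is the combinatorial analogue of pushing $\mathrm{cof}(\mathcal N)$ up while keeping $\mathfrak d$ and $\mathrm{non}(\mathcal M)$ small. Once that forcing is available the rest is routine monotonicity together with the classical preservation theorems already invoked.
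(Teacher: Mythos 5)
Your overall strategy --- enumerate the finitely many upward-closed collections, match most of them to forcings already analyzed in the section, and treat the leftovers by hand --- is exactly the paper's, and your monotonicity observation is correct. But there are two concrete problems. First, your bookkeeping of which up-sets remain is wrong: $N_1 = \{\mathcal D(\leq^*), \mathcal D(\in^*), \omega^\omega\setminus(\omega^\omega)^W\}$ is precisely the constellation already realized by Miller forcing alone (Theorem \ref{miller} gives $\mathcal B(\neq^*) = \mathcal D(\neq^*) = \emptyset$, which empties the entire left column), so it needs no further treatment; the up-set your nine named forcings actually leave uncovered is $\{\mathcal B(\neq^*), \mathcal D(\leq^*), \mathcal D(\in^*), \omega^\omega\setminus(\omega^\omega)^W\}$. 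Worse, the forcing $\mathbb{PT} * \dot{\mathbb B}$ you propose for $N_1$ does not realize $N_1$: the random real of the second step is eventually different from every real of $W[m]$, hence from every real of $W$, so $\mathcal B(\neq^*)$ is nonempty in $W[m][r]$ --- the one node you never check. By accident your forcing realizes exactly the up-set you omitted (it is the paper's cut e), handled there as $\mathbb B * \dot{\mathbb{PT}}$, with the emptiness of $\mathcal B(\leq^*)$ and $\mathcal D(\neq^*)$ argued via $\omega^\omega$-bounding and preservation of outer measure respectively), so your list of forcings happens to be adequate; but as written the proof asserts a false conclusion for $N_1$ and omits the verification that actually matters for the missing cut, namely that $\mathcal B(\neq^*)$ becomes and stays nonempty while $\mathcal D(\neq^*)$ stays empty.

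Second, for $N_2 = \{\mathcal D(\in^*), \omega^\omega\setminus(\omega^\omega)^W\}$ you only postulate a proper $\omega^\omega$-bounding forcing adding a slalom-evading real while keeping $(\omega^\omega)^W$ non-meager, and you yourself concede that constructing it is \say{the main obstacle}. That construction is the entire mathematical content of this case and cannot be left as an assertion. The paper does not invent a new forcing here: it takes the infinitely-often-equal forcing $\mathbb{EE}$ of \cite[Definition 7.4.11]{BarJu95}, quotes that it is $\omega^\omega$-bounding and does not make the ground model reals meager, and then gives an explicit coding argument --- translating each ground-model slalom $\sigma$ into a ground-model element $f_\sigma$ of $\Pi_{n<\omega}2^n$ and the generic infinitely-often-equal element of $\Pi_{n<\omega}2^n$ into an element $\hat g$ of $\omega^\omega$ with $\hat g(n)\notin\sigma(n)$ infinitely often --- to witness $\mathcal D(\in^*)\neq\emptyset$. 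Until you either carry out your norm/fusion construction in full or cite such a known forcing and verify these three properties, this case, and with it the theorem, remains unproved.
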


Note that this is slightly weaker than the sense of cuts I have been considering above since I'm making no distinction between various non-empty nodes after forcing. Also, as with several of the results in this section, this is essentially folklore, but we include it to cement the perspective.
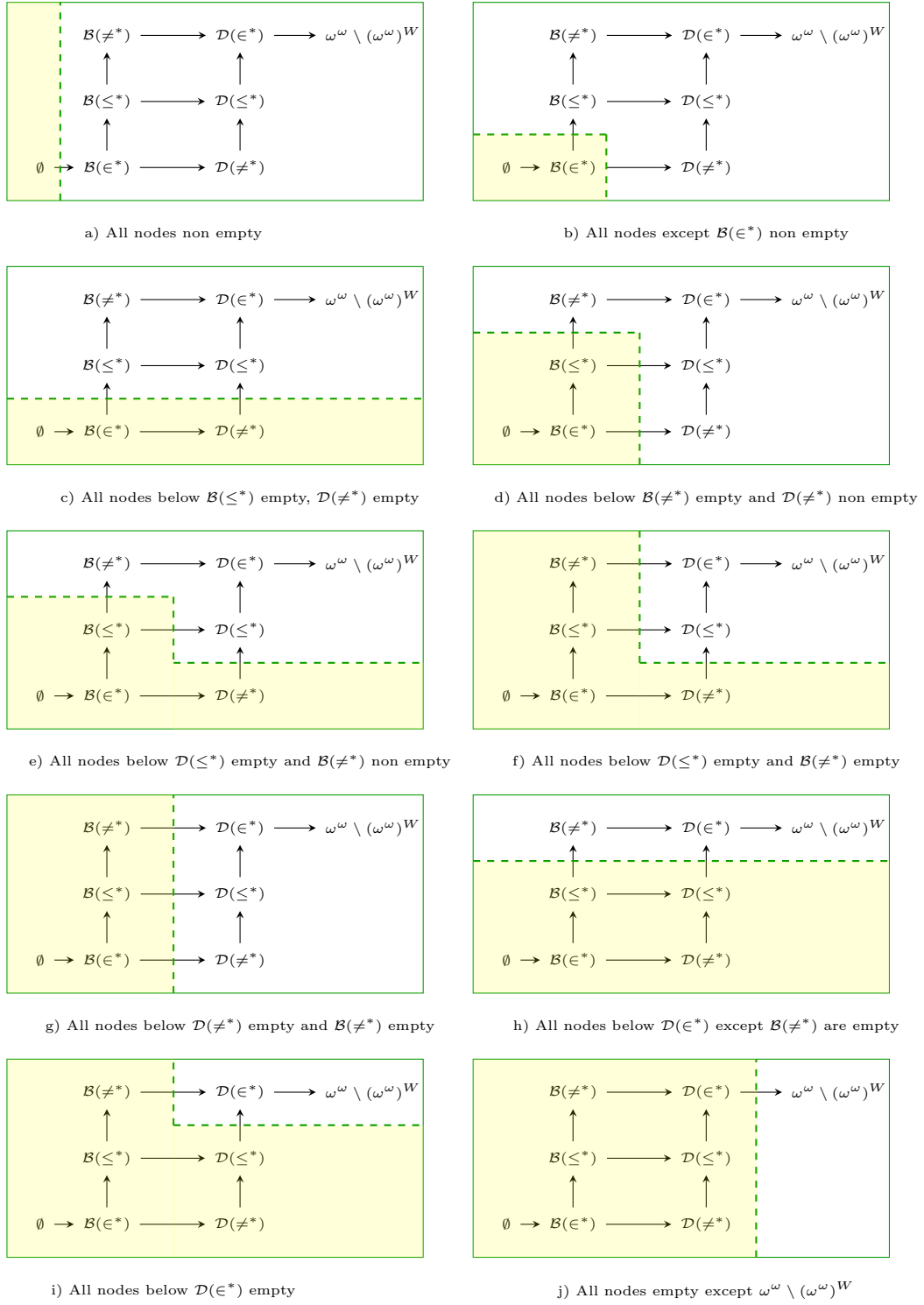
\begin{figure}\label{Figure.Cichon-basic}
\centering
  \begin{tikzpicture}
  \begin{scope}[shift={(0, 16)}]
  
     \draw (0,0) node (empty) {\tiny{$\emptyset$}}
           (1,0) node (Bin*) {\tiny{$\mathcal B(\in^*)$}}
           (1,1) node (Bleq*) {\tiny{$\mathcal B(\leq^*)$}}
           (1,2) node (Bneq*) {\tiny{$\mathcal B(\neq^*)$}}
           (3,0) node (Dneq*) {\tiny{$\mathcal D(\neq^*)$}}
           (3,1) node (Dleq*) {\tiny{$\mathcal D(\leq^*)$}}
           (3,2) node (Din*) {\tiny{$\mathcal D(\in^*)$}}
           (5,2) node (all) {\tiny{$\omega^\omega\setminus(\omega^\omega)^W$}}
           (2, -1) node (label) {\tiny{a) All nodes non empty}}
           ;
     \draw[->,>=stealth]
            (empty) edge (Bin*)
            (Bin*) edge (Bleq*)
            (Bleq*) edge (Bneq*)
            (Bin*) edge (Dneq*)
            (Bleq*) edge (Dleq*)
            (Bneq*) edge (Din*)
            (Dneq*) edge (Dleq*)
            (Dleq*) edge (Din*)
            (Din*) edge (all)
            ;
       \draw[thick,dashed,OliveGreen] (.3,-.5) -- (.3,2.5);
       \draw[OliveGreen] (-.5,-.5) rectangle (5.75,2.5);
       \draw[draw=none,fill=yellow,fill opacity=.15] (-.5,-.5) rectangle (.3,2.5);

       \end{scope}
       
        \begin{scope}[shift={(7, 0)}]
  
     \draw (0,0) node (empty) {\tiny{$\emptyset$}}
           (1,0) node (Bin*) {\tiny{$\mathcal B(\in^*)$}}
           (1,1) node (Bleq*) {\tiny{$\mathcal B(\leq^*)$}}
           (1,2) node (Bneq*) {\tiny{$\mathcal B(\neq^*)$}}
           (3,0) node (Dneq*) {\tiny{$\mathcal D(\neq^*)$}}
           (3,1) node (Dleq*) {\tiny{$\mathcal D(\leq^*)$}}
           (3,2) node (Din*) {\tiny{$\mathcal D(\in^*)$}}
           (5,2) node (all) {\tiny{$\omega^\omega\setminus(\omega^\omega)^W$}}
           (3, -1) node (label) {\tiny{j) All nodes empty except $\omega^\omega \setminus (\omega^\omega)^W$}}
           ;
     \draw[->,>=stealth]
            (empty) edge (Bin*)
            (Bin*) edge (Bleq*)
            (Bleq*) edge (Bneq*)
            (Bin*) edge (Dneq*)
            (Bleq*) edge (Dleq*)
            (Bneq*) edge (Din*)
            (Dneq*) edge (Dleq*)
            (Dleq*) edge (Din*)
            (Din*) edge (all)
            ;
       \draw[thick,dashed,OliveGreen] (3.75,-.5) -- (3.75,2.5);
       \draw[OliveGreen] (-.5,-.5) rectangle (5.75,2.5);
       \draw[draw=none,fill=yellow,fill opacity=.15] (-.5,-.5) rectangle (3.75,2.5);

       \end{scope}
       
       \begin{scope}[shift={(0, 0)}]
  
     \draw (0,0) node (empty) {\tiny{$\emptyset$}}
           (1,0) node (Bin*) {\tiny{$\mathcal B(\in^*)$}}
           (1,1) node (Bleq*) {\tiny{$\mathcal B(\leq^*)$}}
           (1,2) node (Bneq*) {\tiny{$\mathcal B(\neq^*)$}}
           (3,0) node (Dneq*) {\tiny{$\mathcal D(\neq^*)$}}
           (3,1) node (Dleq*) {\tiny{$\mathcal D(\leq^*)$}}
           (3,2) node (Din*) {\tiny{$\mathcal D(\in^*)$}}
           (5,2) node (all) {\tiny{$\omega^\omega\setminus(\omega^\omega)^W$}}
           (2, -1) node (label) {\tiny{i) All nodes below $\mathcal D(\in^*)$ empty}}
           ;
     \draw[->,>=stealth]
            (empty) edge (Bin*)
            (Bin*) edge (Bleq*)
            (Bleq*) edge (Bneq*)
            (Bin*) edge (Dneq*)
            (Bleq*) edge (Dleq*)
            (Bneq*) edge (Din*)
            (Dneq*) edge (Dleq*)
            (Dleq*) edge (Din*)
            (Din*) edge (all)
            ;
       \draw[thick,dashed,OliveGreen] (2., 1.5) -- (2,2.5);
       \draw[thick,dashed,OliveGreen] (2., 1.5) -- (5.75,1.5);
       \draw[OliveGreen] (-.5,-.5) rectangle (5.75,2.5);
       \draw[draw=none,fill=yellow,fill opacity=.15] (-.5,-.5) rectangle (2,2.5);
       \draw[draw=none,fill=yellow,fill opacity=.15] (2,-.5) rectangle (5.75,1.5);

       \end{scope}
       
       \begin{scope}[shift={(7, 4)}]
  
     \draw (0,0) node (empty) {\tiny{$\emptyset$}}
           (1,0) node (Bin*) {\tiny{$\mathcal B(\in^*)$}}
           (1,1) node (Bleq*) {\tiny{$\mathcal B(\leq^*)$}}
           (1,2) node (Bneq*) {\tiny{$\mathcal B(\neq^*)$}}
           (3,0) node (Dneq*) {\tiny{$\mathcal D(\neq^*)$}}
           (3,1) node (Dleq*) {\tiny{$\mathcal D(\leq^*)$}}
           (3,2) node (Din*) {\tiny{$\mathcal D(\in^*)$}}
           (5,2) node (all) {\tiny{$\omega^\omega\setminus(\omega^\omega)^W$}}
           (3, -1) node (label) {\tiny{h) All nodes below $\mathcal D(\in^*)$ except $\mathcal B(\neq^*)$ are empty}}
           ;
     \draw[->,>=stealth]
            (empty) edge (Bin*)
            (Bin*) edge (Bleq*)
            (Bleq*) edge (Bneq*)
            (Bin*) edge (Dneq*)
            (Bleq*) edge (Dleq*)
            (Bneq*) edge (Din*)
            (Dneq*) edge (Dleq*)
            (Dleq*) edge (Din*)
            (Din*) edge (all)
            ;
       \draw[thick,dashed,OliveGreen] (-.5,1.5) -- (5.75,1.5);
       \draw[OliveGreen] (-.5,-.5) rectangle (5.75,2.5);
       \draw[draw=none,fill=yellow,fill opacity=.15] (-.5,-.5) rectangle (5.75,1.5);

       \end{scope}
       
              \begin{scope}[shift={(7, 8)}]
  
     \draw (0,0) node (empty) {\tiny{$\emptyset$}}
           (1,0) node (Bin*) {\tiny{$\mathcal B(\in^*)$}}
           (1,1) node (Bleq*) {\tiny{$\mathcal B(\leq^*)$}}
           (1,2) node (Bneq*) {\tiny{$\mathcal B(\neq^*)$}}
           (3,0) node (Dneq*) {\tiny{$\mathcal D(\neq^*)$}}
           (3,1) node (Dleq*) {\tiny{$\mathcal D(\leq^*)$}}
           (3,2) node (Din*) {\tiny{$\mathcal D(\in^*)$}}
           (5,2) node (all) {\tiny{$\omega^\omega\setminus(\omega^\omega)^W$}}
           (3, -1) node (label) {\tiny{f) All nodes below $\mathcal D(\leq^*)$ empty and $\mathcal B(\neq^*)$ empty}}
           ;
     \draw[->,>=stealth]
            (empty) edge (Bin*)
            (Bin*) edge (Bleq*)
            (Bleq*) edge (Bneq*)
            (Bin*) edge (Dneq*)
            (Bleq*) edge (Dleq*)
            (Bneq*) edge (Din*)
            (Dneq*) edge (Dleq*)
            (Dleq*) edge (Din*)
            (Din*) edge (all)
            ;
       \draw[thick,dashed,OliveGreen] (2., .5) -- (2,2.5);
       \draw[thick,dashed,OliveGreen] (2., .5) -- (5.75,.5);
       \draw[OliveGreen] (-.5,-.5) rectangle (5.75,2.5);
       \draw[draw=none,fill=yellow,fill opacity=.15] (-.5,-.5) rectangle (2,2.5);
       \draw[draw=none,fill=yellow,fill opacity=.15] (2,-.5) rectangle (5.75,.5);

       \end{scope}
       
              \begin{scope}[shift={(0, 8)}]
  
     \draw (0,0) node (empty) {\tiny{$\emptyset$}}
           (1,0) node (Bin*) {\tiny{$\mathcal B(\in^*)$}}
           (1,1) node (Bleq*) {\tiny{$\mathcal B(\leq^*)$}}
           (1,2) node (Bneq*) {\tiny{$\mathcal B(\neq^*)$}}
           (3,0) node (Dneq*) {\tiny{$\mathcal D(\neq^*)$}}
           (3,1) node (Dleq*) {\tiny{$\mathcal D(\leq^*)$}}
           (3,2) node (Din*) {\tiny{$\mathcal D(\in^*)$}}
           (5,2) node (all) {\tiny{$\omega^\omega\setminus(\omega^\omega)^W$}}
           (3, -1) node (label) {\tiny{e) All nodes below $\mathcal D(\leq^*)$ empty and $\mathcal B(\neq^*)$ non empty}}
           ;
     \draw[->,>=stealth]
            (empty) edge (Bin*)
            (Bin*) edge (Bleq*)
            (Bleq*) edge (Bneq*)
            (Bin*) edge (Dneq*)
            (Bleq*) edge (Dleq*)
            (Bneq*) edge (Din*)
            (Dneq*) edge (Dleq*)
            (Dleq*) edge (Din*)
            (Din*) edge (all)
            ;
       \draw[thick,dashed,OliveGreen] (2., 1.5) -- (2,.5);
       \draw[thick,dashed,OliveGreen] (-.5, 1.5) -- (2,1.5);
       \draw[thick,dashed,OliveGreen] (2., .5) -- (5.75,.5);
       \draw[OliveGreen] (-.5,-.5) rectangle (5.75,2.5);
       \draw[draw=none,fill=yellow,fill opacity=.15] (-.5,-.5) rectangle (2.0,1.5);
       \draw[draw=none,fill=yellow,fill opacity=.15] (2.0,-.5) rectangle (5.75,.5);

       \end{scope}
       
              \begin{scope}[shift={(0, 4)}]
  
     \draw (0,0) node (empty) {\tiny{$\emptyset$}}
           (1,0) node (Bin*) {\tiny{$\mathcal B(\in^*)$}}
           (1,1) node (Bleq*) {\tiny{$\mathcal B(\leq^*)$}}
           (1,2) node (Bneq*) {\tiny{$\mathcal B(\neq^*)$}}
           (3,0) node (Dneq*) {\tiny{$\mathcal D(\neq^*)$}}
           (3,1) node (Dleq*) {\tiny{$\mathcal D(\leq^*)$}}
           (3,2) node (Din*) {\tiny{$\mathcal D(\in^*)$}}
           (5,2) node (all) {\tiny{$\omega^\omega\setminus(\omega^\omega)^W$}}
            (3, -1) node (label) {\tiny{g) All nodes below $\mathcal D(\neq^*)$ empty and $\mathcal B(\neq^*)$ empty}}
           ;
     \draw[->,>=stealth]
            (empty) edge (Bin*)
            (Bin*) edge (Bleq*)
            (Bleq*) edge (Bneq*)
            (Bin*) edge (Dneq*)
            (Bleq*) edge (Dleq*)
            (Bneq*) edge (Din*)
            (Dneq*) edge (Dleq*)
            (Dleq*) edge (Din*)
            (Din*) edge (all)
            ;
       \draw[thick,dashed,OliveGreen] (2,-.5) -- (2,2.5);
       \draw[OliveGreen] (-.5,-.5) rectangle (5.75,2.5);
       \draw[draw=none,fill=yellow,fill opacity=.15] (-.5,-.5) rectangle (2,2.5);

       \end{scope}
       
              \begin{scope}[shift={(7, 12)}]
  
     \draw (0,0) node (empty) {\tiny{$\emptyset$}}
           (1,0) node (Bin*) {\tiny{$\mathcal B(\in^*)$}}
           (1,1) node (Bleq*) {\tiny{$\mathcal B(\leq^*)$}}
           (1,2) node (Bneq*) {\tiny{$\mathcal B(\neq^*)$}}
           (3,0) node (Dneq*) {\tiny{$\mathcal D(\neq^*)$}}
           (3,1) node (Dleq*) {\tiny{$\mathcal D(\leq^*)$}}
           (3,2) node (Din*) {\tiny{$\mathcal D(\in^*)$}}
           (5,2) node (all) {\tiny{$\omega^\omega\setminus(\omega^\omega)^W$}}
            (3, -1) node (label) {\tiny{d) All nodes below $\mathcal B(\neq^*)$ empty and $\mathcal D(\neq^*)$ non empty}}
           ;
     \draw[->,>=stealth]
            (empty) edge (Bin*)
            (Bin*) edge (Bleq*)
            (Bleq*) edge (Bneq*)
            (Bin*) edge (Dneq*)
            (Bleq*) edge (Dleq*)
            (Bneq*) edge (Din*)
            (Dneq*) edge (Dleq*)
            (Dleq*) edge (Din*)
            (Din*) edge (all)
            ;
       \draw[thick,dashed,OliveGreen] (2,-.5) -- (2,1.5);
       \draw[thick,dashed,OliveGreen] (-.5,1.5) -- (2,1.5);
       \draw[OliveGreen] (-.5,-.5) rectangle (5.75,2.5);
       \draw[draw=none,fill=yellow,fill opacity=.15] (-.5,-.5) rectangle (2,1.5);

       \end{scope}
       
              \begin{scope}[shift={(0, 12)}]
  
     \draw (0,0) node (empty) {\tiny{$\emptyset$}}
           (1,0) node (Bin*) {\tiny{$\mathcal B(\in^*)$}}
           (1,1) node (Bleq*) {\tiny{$\mathcal B(\leq^*)$}}
           (1,2) node (Bneq*) {\tiny{$\mathcal B(\neq^*)$}}
           (3,0) node (Dneq*) {\tiny{$\mathcal D(\neq^*)$}}
           (3,1) node (Dleq*) {\tiny{$\mathcal D(\leq^*)$}}
           (3,2) node (Din*) {\tiny{$\mathcal D(\in^*)$}}
           (5,2) node (all) {\tiny{$\omega^\omega\setminus(\omega^\omega)^W$}}
            (3, -1) node (label) {\tiny{c) All nodes below $\mathcal B(\leq^*)$ empty, $\mathcal D(\neq^*)$ empty}}
           ;
     \draw[->,>=stealth]
            (empty) edge (Bin*)
            (Bin*) edge (Bleq*)
            (Bleq*) edge (Bneq*)
            (Bin*) edge (Dneq*)
            (Bleq*) edge (Dleq*)
            (Bneq*) edge (Din*)
            (Dneq*) edge (Dleq*)
            (Dleq*) edge (Din*)
            (Din*) edge (all)
            ;
       \draw[thick,dashed,OliveGreen] (-.5,.5) -- (5.75, .5);
       \draw[OliveGreen] (-.5,-.5) rectangle (5.75,2.5);
       \draw[draw=none,fill=yellow,fill opacity=.15] (-.5,-.5) rectangle (5.75,.5);

       \end{scope}
       
              \begin{scope}[shift={(7, 16)}]
  
     \draw (0,0) node (empty) {\tiny{$\emptyset$}}
           (1,0) node (Bin*) {\tiny{$\mathcal B(\in^*)$}}
           (1,1) node (Bleq*) {\tiny{$\mathcal B(\leq^*)$}}
           (1,2) node (Bneq*) {\tiny{$\mathcal B(\neq^*)$}}
           (3,0) node (Dneq*) {\tiny{$\mathcal D(\neq^*)$}}
           (3,1) node (Dleq*) {\tiny{$\mathcal D(\leq^*)$}}
           (3,2) node (Din*) {\tiny{$\mathcal D(\in^*)$}}
           (5,2) node (all) {\tiny{$\omega^\omega\setminus(\omega^\omega)^W$}}
            (3, -1) node (label) {\tiny{b) All nodes except $\mathcal B(\in^*)$ non empty}}
           ;
     \draw[->,>=stealth]
            (empty) edge (Bin*)
            (Bin*) edge (Bleq*)
            (Bleq*) edge (Bneq*)
            (Bin*) edge (Dneq*)
            (Bleq*) edge (Dleq*)
            (Bneq*) edge (Din*)
            (Dneq*) edge (Dleq*)
            (Dleq*) edge (Din*)
            (Din*) edge (all)
            ;
       \draw[thick,dashed,OliveGreen] (-.5,.5) -- (1.5,.5);
       \draw[thick,dashed,OliveGreen] (1.5,.5) -- (1.5,-.5);
       \draw[OliveGreen] (-.5,-.5) rectangle (5.75,2.5);
       \draw[draw=none,fill=yellow,fill opacity=.15] (-.5,-.5) rectangle (1.5,.5);

       \end{scope}
      
  \end{tikzpicture}
  \caption{All Possible Cuts in the $\leq_W$ Cicho\'n Diagram. Each one can be achieved by a proper forcing over $W$. White means that the node is not empty while yellow means that it is. No distinction is made between different non-empty nodes. Note that the trivial cut where all nodes remain empty is not shown.}
  \label{allcuts}
\end{figure}

\begin{proof}
There are two cuts I have yet to explicitly show. These correspond to e) and i) in Figure \ref{allcuts} below. 

 
 
 
 
\noindent e) All nodes below $\mathcal D(\leq^*)$ are empty and $\mathcal B(\neq^*)$ is non empty: This is the first case where we still have to prove something. Let $\mathbb P = \mathbb B * \dot{\mathbb{PT}}$. I claim that in $W^\mathbb P$ this cut is realized. We have seen that forcing with $\mathbb B$ adds an eventually different real and, by further forcing with $\mathbb{PT}$ over $W^{\mathbb B}$ will add a real which is unbounded by $W^{\mathbb B} \cap \omega^\omega$ and hence $W \cap \omega^\omega$. It remains therefore to see that in $W^\mathbb P$ there are no dominating or infinitely often equal reals over $W$. To show that there are no dominating reals, note that in general $\mathbb{PT}$ adds no dominating real, so in $W^\mathbb P$ there is no real which is dominating over $W^\mathbb B$. But, since $\mathbb B$ is $\omega^\omega$-bounding, it follows that there is no real dominating over $W$ in $W^\mathbb P$. To show there are no infinitely often equal reals, let us first note the following fact.
\begin{fact}[Corollary 2.5.2 of \cite{BarJu95}]
Suppose $M \models ZFC$. Then $M \cap 2^\omega \in \mathcal N$ if and only if there is a sequence $\langle F_n \subseteq 2^n \; | \; n  < \omega \rangle$ such that $\Sigma^\infty_{n=0} |F_n| 2^{-n} < \infty$ and for every $x \in M \cap 2^\omega$ there are infinitely many $n$ so that $x \upharpoonright n \in F_n$.
\end{fact}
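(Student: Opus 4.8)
The plan is to prove both directions of the biconditional, treating the statement as a characterization of null sets; the hypothesis $M \models ZFC$ plays no essential role beyond fixing the set $A := M \cap 2^\omega$ under consideration, so I will work with an arbitrary $A \subseteq 2^\omega$. Throughout, write $\mu$ for the usual product measure on $2^\omega$ and $[s] = \{x \in 2^\omega : s \subseteq x\}$ for the cylinder determined by $s \in 2^{<\omega}$, so that $\mu([s]) = 2^{-|s|}$.

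For the ($\Leftarrow$) direction, suppose a sequence $\langle F_n : n < \omega \rangle$ as described is given. The natural move is to pass to the associated open sets $A_n := \bigcup_{s \in F_n} [s] = \{x : x \upharpoonright n \in F_n\}$. Since the cylinders $[s]$ for distinct $s \in 2^n$ are disjoint and each has measure $2^{-n}$, we get $\mu(A_n) = |F_n| 2^{-n}$, and hence $\sum_n \mu(A_n) < \infty$ by hypothesis. By the easy half of the Borel--Cantelli lemma, $\limsup_n A_n = \bigcap_N \bigcup_{n \geq N} A_n$ is null, since $\mu(\bigcup_{n \geq N} A_n) \leq \sum_{n \geq N} \mu(A_n) \to 0$. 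The capturing condition says precisely that every $x \in A$ lies in $A_n$ for infinitely many $n$, i.e. $A \subseteq \limsup_n A_n$, so $A \in \mathcal N$. This direction is routine.

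The substance is in the ($\Rightarrow$) direction. Assuming $A \in \mathcal N$, I would exploit outer regularity of the measure: for each $m$ choose an open $U_m \supseteq A$ with $\mu(U_m) < 2^{-2m}$, and write $U_m = \bigcup_{s \in S_m} [s]$ where $S_m \subseteq 2^{<\omega}$ is prefix-free (an antichain under extension), so that $\sum_{s \in S_m} 2^{-|s|} = \mu(U_m) < 2^{-2m}$. The one extra requirement I would build in is that every string in $S_m$ has length at least $m$; this is harmless, since one may refine any cylinder $[s]$ into the finite disjoint union of its length-$\ell$ extensions for any $\ell \geq |s|$ without changing the open set, its measure, or prefix-freeness of the decomposition. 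Now set $F_n := \{s \in \bigcup_m S_m : |s| = n\}$. A Tonelli-style interchange of the two nonnegative sums gives $\sum_n |F_n| 2^{-n} = \sum_m \sum_{s \in S_m} 2^{-|s|} < \sum_m 2^{-2m} < \infty$, which is the required smallness.

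Finally I would verify the capturing property: given $x \in A$ and any $m$, since $x \in U_m$ there is some $s \in S_m$ with $x \in [s]$, whence $x \upharpoonright |s| = s \in F_{|s|}$, and $|s| \geq m$ by the length constraint. Thus for every $m$ there is an index $n_m \geq m$ with $x \upharpoonright n_m \in F_{n_m}$, and since $n_m \to \infty$ the set of such indices is infinite, giving infinitely many $n$ with $x \upharpoonright n \in F_n$. The main obstacle, and the only place real care is needed, is exactly this length bookkeeping: without forcing the lengths occurring in $S_m$ to tend to infinity one only gets each $x$ captured ``once per $m$'' with no guarantee of infinitely many \emph{distinct} coordinates, and the refinement step is what repairs this while preserving the measure estimate.
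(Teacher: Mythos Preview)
Your argument is correct. The paper does not prove this statement at all: it is quoted as a Fact from \cite{BarJu95} (Corollary 2.5.2) and used as a black box, so there is no ``paper's proof'' to compare against. Your proof is the standard one; the only slip is that in the Tonelli step you should write $\sum_n |F_n|2^{-n} \le \sum_m \sum_{s\in S_m} 2^{-|s|}$ rather than ``$=$'', since a string could in principle belong to several $S_m$'s and is counted only once in $F_n$---but the inequality goes the right way and the conclusion is unaffected.
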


As a corollary of this Fact, notice that adding an infinitely often equal real on $\omega^\omega$ makes the ground model reals measure $0$. To see why, suppose $g \in \omega^\omega$ is infinitely often equal over an inner model $M$ and let $\langle \tau_k \; | \; k < \omega \rangle$ be an enumeration in $M$ of the elements of $2^{< \omega}$. Then for every $x \in 2^\omega \cap M$ let $\hat{x} : \omega \to \omega$ be defined by $\hat{x} (n) =k$ if $x \upharpoonright n = k$. Clearly if $x \in M$ the $\hat{x} \in M$ so there are infinitely many $n$ such that $\hat{x} (n) = g(n)$. But then, pulling back, let $g ' : \omega \to 2^{< \omega}$ be defined by $g ' (n) = \sigma_k$ if $g(n) = k$ and $\sigma_k \in 2^n$ and is trivial otherwise. Then we have that for every $x \in M \cap 2^\omega$ if $\hat{x} (n) = g(n)$ then $x \upharpoonright n = g ' (n)$ so the sequence $\langle \{g ' (n) \} \; | \; n < \omega \rangle$ witnesses that $2^\omega \cap M$ is measure $0$ by the Fact. 

From this it follows immediately that $\mathbb P$ does not add infinitely often equal reals since both $\mathbb B$ (\cite[Lemma 6.3.12]{BarJu95}) and $\mathbb{PT}$ (\cite[Theorem 7.3.47]{BarJu95}) preserve outer measure.




\noindent i) All nodes below $\mathcal D(\in^*)$ are empty: This is accomplished by forcing with the infinitely often equal forcing $\mathbb{EE}$ as defined in \cite[Definition 7.4.11]{BarJu95}. The relevant facts to see that this forcing works can be deduced from \cite[Lemma 7.4.14]{BarJu95}). The details are left to the interested reader. 



\end{proof}

\section{Achieving a Full Separation in the $\leq_W$-Cicho\'n Diagram and the axiom $CD(\leq_W)$}
In this section building off the work done in the last section I build a model where there is complete separation between all elements in the diagram.
\begin{theorem}(GBC)
Given any transitive inner model $W$ of ZFC, there is a proper forcing notion $\mathbb P$, such that in $W^\mathbb P$ all the nodes in the $\leq_W$-Cicho\'n diagram are distinct and every possibile separation is simultaneously realized. 
\label{ConFS}
\end{theorem}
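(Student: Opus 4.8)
The plan is to force over $W$ with a product of five of the forcings from Section~3, arranged so that their generic reals jointly witness every separation left open by Theorem~\ref{propdi}. The point of departure is the observation that for a \emph{fixed} real $x$, whether $x$ belongs to a given node of the $\leq_W$-diagram is decided inside $W[x]=L[x,W]$ using only the fixed parameter $(\omega^\omega)^W$; hence this is absolute, and once a real witnessing membership (or non-membership) in a node has been produced over $W$, no further forcing over $W[x]$ can disturb it. So it is enough to exhibit, in a single extension, five reals whose node-memberships are precisely those computed for the one-step extensions studied in Section~3.

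Take $\mathbb P=(\mathbb S^W\times\mathbb L^W)*(\dot{\mathbb C}*\dot{\mathbb B}*\dot{\mathbb{LOC}})$, where $\mathbb S^W\times\mathbb L^W$ is the product (computed in $W$) of Sacks and Laver forcing and $\dot{\mathbb C}*\dot{\mathbb B}*\dot{\mathbb{LOC}}$ is a finite-support iteration of Cohen, random and localization forcing over $W[s][l]$. Write $s,l,c,r,\sigma$ for the generics. Since the first block is a product, $s$ and $l$ are each generic over $W$ for their own factor, so $W[s]$ and $W[l]$ are the pure Sacks and Laver extensions of $W$; the Cohen poset does not depend on the model, so $c$ is Cohen-generic over $W$; $r$ is random over $W$ because it avoids every Borel null set coded in $W$ (measure being absolute); and $\sigma$ eventually captures every real of the final model, hence every real of $W$. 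Reading off the analyses of Section~3 (the Cohen and Sacks subsections, Theorem~\ref{locthm}, Theorem~\ref{randomthm}, Theorem~\ref{laver}) in these pure extensions, and transferring by absoluteness to $W^{\mathbb P}$, we obtain: $\sigma\in\mathcal B(\in^*)$; $l\in\mathcal B(\leq^*)\cap\mathcal B(\neq^*)\cap\mathcal D(\leq^*)\cap\mathcal D(\in^*)$ but $l\notin\mathcal B(\in^*)\cup\mathcal D(\neq^*)$; $c\in\mathcal D(\neq^*)\cap\mathcal D(\leq^*)\cap\mathcal D(\in^*)$ but $c$ lies in no $\mathcal B$-node; $r\in\mathcal B(\neq^*)\cap\mathcal D(\in^*)$ but $r\notin\mathcal B(\in^*)\cup\mathcal B(\leq^*)\cup\mathcal D(\neq^*)\cup\mathcal D(\leq^*)$; and $s$ is a new real in no node other than $\omega^\omega\setminus(\omega^\omega)^W$, by the Sacks property.

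All inclusions of the diagram hold in $W^{\mathbb P}$ by Theorem~\ref{propdi}, so it remains to see that each of the nine arrows is strict and that the three pairs $\{\mathcal B(\leq^*),\mathcal D(\neq^*)\}$, $\{\mathcal B(\neq^*),\mathcal D(\neq^*)\}$, $\{\mathcal B(\neq^*),\mathcal D(\leq^*)\}$ are genuinely incomparable; this is bookkeeping with the five reals. Thus $\sigma$ gives $\emptyset\subsetneq\mathcal B(\in^*)$; $l$ gives $\mathcal B(\in^*)\subsetneq\mathcal B(\leq^*)$, $\mathcal D(\neq^*)\subsetneq\mathcal D(\leq^*)$ and $\mathcal B(\leq^*)\not\subseteq\mathcal D(\neq^*)$; $r$ gives $\mathcal B(\leq^*)\subsetneq\mathcal B(\neq^*)$, $\mathcal D(\leq^*)\subsetneq\mathcal D(\in^*)$, $\mathcal B(\neq^*)\not\subseteq\mathcal D(\neq^*)$ and $\mathcal B(\neq^*)\not\subseteq\mathcal D(\leq^*)$; $c$ gives $\mathcal B(\in^*)\subsetneq\mathcal D(\neq^*)$, $\mathcal B(\leq^*)\subsetneq\mathcal D(\leq^*)$, $\mathcal B(\neq^*)\subsetneq\mathcal D(\in^*)$ together with $\mathcal D(\neq^*)\not\subseteq\mathcal B(\leq^*)$, $\mathcal D(\neq^*)\not\subseteq\mathcal B(\neq^*)$ and $\mathcal D(\leq^*)\not\subseteq\mathcal B(\neq^*)$; and $s$ gives $\mathcal D(\in^*)\subsetneq\omega^\omega\setminus(\omega^\omega)^W$. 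Hence all eight nodes are pairwise distinct and every consistent separation is realised simultaneously.

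The one genuine obstacle is properness of $\mathbb P$. The trailing iteration is ccc (as computed in $W[s][l]$), so $\mathbb P$ is proper provided $\mathbb S^W\times\mathbb L^W$ is; establishing that — that finite products of these particular definable Axiom~A (arboreal) forcings remain proper, via a simultaneous fusion argument — is the step requiring care. I also need the routine checks that $c$ and $r$ really are generic over $W$ (sketched above) and that for $\sigma$ only the capturing property, not a full description of $L[\sigma,W]$, is used; and I should note that $\mathbb P$ cannot be made ccc, since $\mathbb S$ and $\mathbb L$ are not, although properness is exactly what the theorem asserts. If one prefers to bypass product-properness, the same five reals can instead be added by a countable-support iteration of the five forcings together with a bookkeeping argument guaranteeing that each is added generically over $W$, after which the absoluteness argument applies unchanged.
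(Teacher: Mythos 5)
Your proposal is correct and follows essentially the same route as the paper: force over $W$ with a product/iteration of the Section~3 forcings (the paper uses $\mathbb S^W\times\mathbb L^W\times\mathbb{LOC}^W\times\mathbb B^W$, omitting $\mathbb C$ since $\mathbb{LOC}$ already adds Cohen reals), read off node-memberships of the individual generics from the one-step analyses via the absoluteness of \say{$x\in A$} to $W[x]$, and reduce properness to that of $\mathbb S^W\times\mathbb L^W$ plus ccc-ness of the remaining factors. The one step you flag but do not carry out --- properness of $\mathbb S^W\times\mathbb L^W$ via a simultaneous fusion/Axiom~A argument --- is exactly the paper's Lemma~\ref{sackslaver}, proved by precisely the method you indicate, and your explicit bookkeeping of which generic witnesses which strict inclusion is a more detailed (and accurate) version of what the paper leaves to the reader.
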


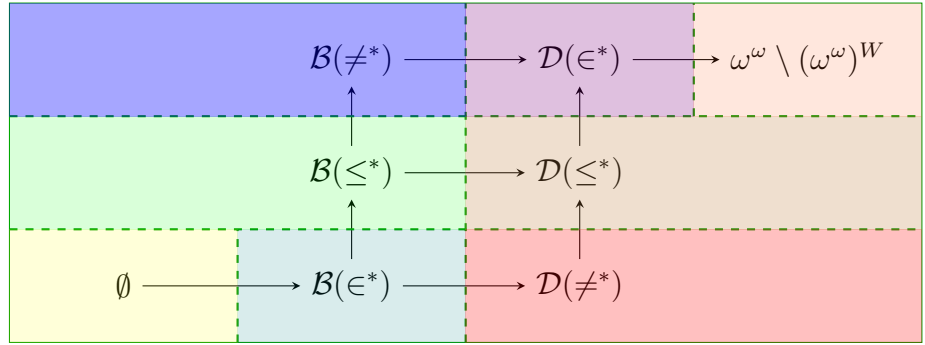
\begin{figure}[h]\label{Figure.Cichon-basic}
\centering
  \begin{tikzpicture}[scale=1.5,xscale=2]
     \draw (0,0) node (empty) {$\emptyset$}
           (1,0) node (Bin*) {$\mathcal B(\in^*)$}
           (1,1) node (Bleq*) {$\mathcal B(\leq^*)$}
           (1,2) node (Bneq*) {$\mathcal B(\neq^*)$}
           (2,0) node (Dneq*) {$\mathcal D(\neq^*)$}
           (2,1) node (Dleq*) {$\mathcal D(\leq^*)$}
           (2,2) node (Din*) {$\mathcal D(\in^*)$}
           (3,2) node (all) {$\omega^\omega\setminus(\omega^\omega)^W$}
           ;
     \draw[->,>=stealth]
            (empty) edge (Bin*)
            (Bin*) edge (Bleq*)
            (Bleq*) edge (Bneq*)
            (Bin*) edge (Dneq*)
            (Bleq*) edge (Dleq*)
            (Bneq*) edge (Din*)
            (Dneq*) edge (Dleq*)
            (Dleq*) edge (Din*)
            (Din*) edge (all)
            ;
       \draw[thick,dashed,OliveGreen] (-.5,.5) -- (3.5,.5);
       \draw[thick,dashed,OliveGreen] (1.5,-.5) -- (1.5,2.5);
       \draw[thick,dashed,OliveGreen] (-.5,1.5) -- (3.5,1.5);
       \draw[thick,dashed,OliveGreen] (.5,-.5) -- (.5,.5);
       \draw[thick,dashed,OliveGreen] (2.5,1.5) -- (2.5,2.5);
       \draw[OliveGreen] (-.5,-.5) rectangle (3.5,2.5);
       \draw[draw=none,fill=yellow, fill opacity=.15] (-.5,-.5) rectangle (.5,.5);
       \draw[draw=none,fill=teal,fill opacity=.15] (.5,-.5) rectangle (1.5,.5);
       \draw[draw=none,fill=red,fill opacity=.25] (1.5,-.5) rectangle (3.5,.5);
       \draw[draw=none,fill=green,fill opacity=.15] (-.5,.5) rectangle (1.5,1.5);
       \draw[draw=none,fill=brown,fill opacity=.25] (1.5,.5) rectangle (3.5,1.5);
       \draw[draw=none,fill=blue,fill opacity=.35] (-.5,1.5) rectangle (1.5,2.5);
       \draw[draw=none,fill=violet,fill opacity=.25] (1.5,1.5) rectangle (2.5,2.5);
       \draw[draw=none,fill=Orange,fill opacity=.15] (2.5,1.5) rectangle (3.5,2.5);
  \end{tikzpicture}
  \caption{Full Separation of the $\leq_W$-diagram}
\end{figure}

\noindent In what follows I call the axiom \say{All consistent separations of the $\leq_W$-diagram are distinct} $CD(\leq_W)$ or \say{full Cicho\'n Diagram for $\leq_W$}. Thus the above theorem states that $CD(\leq_W)$ can be forced over $W$ by a proper forcing. For different inner models $W$ the sentence $CD(\leq_W)$ may vary but they can all be forced the same way.

Before proving this theorem I need a simple technical result about Sacks and Laver forcing.

\begin{lemma}
The product forcing $\mathbb S \times \mathbb L$ satisfies Axiom A and hence is proper.
\label{sackslaver}
\end{lemma}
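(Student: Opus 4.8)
The plan is to endow $\mathbb{S}\times\mathbb{L}$ with an Axiom~A structure defined coordinatewise from the classical Axiom~A structures on $\mathbb{S}$ and $\mathbb{L}$, and then to quote the standard fact that Axiom~A forcings are proper. Recall that $\mathbb{Q}$ satisfies Axiom~A if there are orderings $\le_n$ ($n<\omega$) on $\mathbb{Q}$ with $\le_0\,=\,\le$, $\le_{n+1}\,\subseteq\,\le_n$, such that (i) every sequence $q_0\ge_0 q_1\ge_1 q_2\ge_2\cdots$ has a lower bound which is $\le_n q_n$ for all $n$ (\emph{fusion}), and (ii) for all $q$, all $n$, and all names $\dot\alpha$ for ordinals there are $q'\le_n q$ and a countable set $B$ with $q'\Vdash\dot\alpha\in\check B$ (\emph{sealing}). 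On the factors I would use the textbook orderings (see \cite[Ch.~7]{BarJu95}): for $\mathbb{S}$, $T'\le_n T$ iff $T'\subseteq T$ and the first $n$ splitting fronts of $T'$ and $T$ agree, so that $\le_n$ freezes a finite subtree and leaves the (finitely many) front nodes free; for $\mathbb{L}$, the usual fusion ordering together with an enumeration $\langle v^T_k:k<\omega\rangle$ of the splitting nodes of each Laver tree $T$, where $\le_n$ fixes the first $n$ splitting nodes and their immediate successors, leaving everything above them free.

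I would then set $(S',L')\le_n(S,L)$ iff $S'\le_n S$ and $L'\le_n L$. That $\le_0$ is the product order and $\le_{n+1}\subseteq\le_n$ is immediate, and fusion is routine: a product fusion sequence projects to fusion sequences in each coordinate, and $(\bigcap_kS_k,\bigcap_kL_k)$ is a lower bound which is $\le_n$ below the $n$th term. The work is sealing. Given $(S,L)$, $n$, and a name $\dot\alpha$ for an ordinal, I would run a single two-dimensional fusion, building $(S_k,L_k)_{k\ge n}$ with $(S_n,L_n)=(S,L)$, $(S_{k+1},L_{k+1})\le_k(S_k,L_k)$, and finite sets of ordinals $B_k$, so that $(\bigcap_kS_k,\bigcap_kL_k)\Vdash\dot\alpha\in\bigcup_kB_k$. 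At stage $k$ there are only finitely many relevant ``cells'': the finitely many Sacks nodes on the $k$th front, crossed with the finitely many Laver nodes activated by the bookkeeping at that stage. For each cell one extends $(S_k,L_k)$ inside that cell to a condition deciding $\dot\alpha$, throws the decided value into $B_k$, and amalgamates the finitely many extensions into $(S_{k+1},L_{k+1})\le_k(S_k,L_k)$; the amalgamation is legitimate because in each coordinate $\le_k$ freezes only a finite amount of information and the extensions below distinct front/activated nodes are independent. At the limit, a density argument---every branch of $\bigcap_kS_k$ and of $\bigcap_kL_k$ runs through activated nodes at cofinally many stages, and one can arrange that this happens simultaneously in both coordinates---shows $(\bigcap_kS_k,\bigcap_kL_k)\Vdash\dot\alpha\in\bigcup_kB_k$, a countable set. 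Hence $\mathbb{S}\times\mathbb{L}$ satisfies Axiom~A and is therefore proper.

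The main obstacle is the Laver bookkeeping. Unlike Sacks, a Laver tree splits infinitely often at every node past the stem, so when one extends $(S_k,L_k)$ inside a cell to decide $\dot\alpha$ one must not destroy the branching at the Laver node $v$ in play: the deciding extension should be taken below a single chosen immediate successor $v^{\frown}j$, retaining the other successors $v^{\frown}i$ with their old subtrees to be processed at later stages. Setting up the enumeration $\langle v^T_k\rangle$ so that every splitting node is eventually processed and every node's branching is ``secured'' in the limit tree, and interleaving this with the (much easier) Sacks fusion while keeping each stage's amalgamation a genuine $\le_k$-extension in the product, is the one place the two Axiom~A structures must be made to cooperate. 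Since in each coordinate $\le_k$ leaves everything above the current front completely free, the single-coordinate arguments run side by side without interference, which is why this counts as a ``simple technical result.''
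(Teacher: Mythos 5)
Your proposal is correct, and the Axiom A structure you put on the product --- the coordinatewise $\le_n$ built from the standard Sacks and Laver fusion orderings on the factors --- is exactly the one the paper uses; the fusion clause is dispatched the same way in both arguments (coordinatewise intersections). Where you genuinely diverge is in the verification of the decision clause. You run an $\omega$-stage two-dimensional fusion: at stage $k$ you decide the name separately on each of the finitely many rectangles (Sacks front node) $\times$ (activated Laver successor), amalgamate into a $\le_k$-extension, and argue density at the limit to get $\dot\alpha\in\bigcup_k B_k$. The paper instead follows Groszek's Theorem 1 and produces the required $\le_n$-extension in a \emph{single} step: it takes the dense set $D$ of conditions deciding $\dot a$, collects the pairs of stems of members of $D$ lying above the $n$-th splitting level of $p$ in both coordinates, passes to the minimal such pairs (an antichain), and takes the union of one chosen deciding condition per minimal pair. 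Your route is longer but arguably more robust: the stage-by-stage rectangular decomposition makes both the amalgamation and the final density argument transparent, and it isolates the one genuine difficulty --- the infinite branching of Laver nodes --- in the bookkeeping, exactly as you describe. The paper's one-step amalgamation is shorter, but in a product it requires care not needed for a single tree forcing: two pairs that are incomparable in the product order can still have overlapping cones (e.g.\ $\sigma\subsetneq\sigma'$ while $\tau'\subsetneq\tau$), so the claim that any condition whose stem extends a minimal pair is automatically below the deciding condition chosen for that pair needs an extra argument, which your construction sidesteps because at each stage your cells partition the relevant part of the product. Either way, the lemma follows since Axiom A implies properness.
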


\begin{proof}
Theorem 1 of \cite{Gro87} gives a general framework for showing that certain arboreal forcings satisfy Axiom A (including Sacks and Laver forcings) and here I adapt the proof to the case of a product of two arboreal forcings. Recall that if $p, q \in \mathbb S$ and $n \in \omega$ then we let $q \leq^\mathbb S_n p$ if and only if $q \subseteq p$ and every $n^{\rm th}$ splitting node of $q$ is an $n^{\rm th}$ splitting node of $p$ i.e. if $\tau \in q$ is a splitting node with $n$ splitting predecessors in $q$ then the same is true of $\tau$ in $p$. Also, given a canonical enumeration of $\omega^{<\omega}$ in which $\sigma$ appears before $\tau$ if $\sigma \subseteq \tau$ and $\sigma^\frown k$ appears before $\sigma^\frown(k+1)$ then for $p \in \mathbb L$ one gets an enumeration of the elements of $p$ above the stem, $\sigma_1^p,...,\sigma_k^p,...$ and if $p, q \in \mathbb L$ and $n \in \omega$ then let $q \leq^\mathbb L_n p$ if and only if $q \subseteq p$ and $s_i^p = s_i^q$ for all $i =0,...,n$. Clearly if for every $n \in \omega$ and $(p_s, p_l) , (q_s, q_l) \in \mathbb S \times \mathbb L$ we let $(q_s, q_l) \leq_n (p_s, p_l)$ if and only if $q_s \leq_n^\mathbb S p_s$ and $q_l \leq_n^\mathbb L s_l$ then this satisfies the first requirement of Axiom A forcings. Thus, it remains to show that for every $\mathbb S \times \mathbb L$-name $\dot{a}$ and condition $(p_s, p_l) \in \mathbb S \times \mathbb L$ if $(p_s, p_l) \Vdash \dot{a} \in \check{V}$ then for every $n$ there is a $(q_s, q_l)$ and a countable set $A \in V$ such that $(q_s, q_l) \Vdash \dot{a} \in A$.

Fix such a name $\dot{a}$ and condition $p = (p_s, p_l)$. Let $D \subseteq \mathbb S \times \mathbb L$ be the set of all $(q_s, q_l) \leq p$ such that there is some $a(q) \in V$ with $(q_s, q_l) \Vdash \check{a(q)} = \dot{a}$. This set is dense below $p$ since $p$ forces $\dot{a}$ to be an element of $V$. Let $H_D \subseteq p$ be the set of all pairs $(\sigma, \tau) \in p$ such that there is a $(\sigma ', \tau') \subseteq (\sigma, \tau)$ with $\sigma '$ $n$-splitting in $p_s$ and $\tau'$ $n$-splitting in $p_l$ and there is some $r_{\sigma, \tau} = (r_s, r_l) \leq p$ in $D$ whose stem (i.e. the pair of the stems from the two components) is $(\sigma ,  \tau )$. Finally let $Min(H_D)$ be the set of $(\sigma, \tau) \in H_D$ which are minimal with respect to inclusion. Note that $Min(H_D)$ is an antichain since no two elements can be comparable and both minimal. Let $r = (r_s, r_l) = \bigcup \{r_{\sigma, \tau} \; | \; (\sigma, \tau) \in Min(H_D)\}$. A routine check shows that the set $r$ is a condition in $\mathbb S \times \mathbb L$ and $r \leq_n p$.

Now let $A = \{a(r_{\tau, \sigma}) \; | \; (\sigma, \tau) \in Min(H_D)\}$. This set is countable thus to finish the lemma it suffices to show that $r \Vdash \dot{a} \in \check{A}$. To see this, suppose that $t \leq r$ and $t \Vdash \dot{a} = \check{a}$ for some $a$. By extending $t$ if necessary one may assume that the stem of $t$ is in $H_D$. But then some initial segment of the stem is in $Min(H_D)$ so $a \in A$, as needed.
\end{proof}

Now I prove Theorem \ref{ConFS}.
\begin{proof}[Proof of Theorem \ref{ConFS}]
This essentially follows from the theorems of the previous section. Given a definable forcing notion $\mathbb Q$ let me write $\mathbb Q^W$ for the version of that forcing notion as computed in $W$. Let $\mathbb P = \mathbb S^W \times \mathbb L^W \times \mathbb{LOC}^W$. Then in $W^\mathbb P$ not every new real is in an element of the diagram since Sacks reals were added. Moreover, by our arguments above the combination of $\mathbb{LOC}$ and $\mathbb{L}$ will add reals to every node of the diagram but, none of them will be equal and moreover every possible non-separation is realized as one observes by my previous arguments. 

It remains to see that $\mathbb P$ is proper. This follows from Lemma \ref{sackslaver} plus the fact that $\mathbb{LOC}$ is $\sigma$-linked and hence indestructibly ccc.
\end{proof}

Let me finish this paper by briefly studying the axiom $CD(\leq_W)$. First, let me show that there are other ways to obtain it. Indeed there is another, less finegrained approach to forcing $CD(\leq_W)$. To describe this, let me make the following simple observation. Recall that the {\em Maximality Principle} $MP$ of \cite{Hamkins03} states that any statement which is forceably necessary or can be forced to be true in such as a way that it cannot become later forced to be false, is already true. If $\Gamma$ is a class of forcings then the maximality principle for $\Gamma$, $MP_\Gamma$, states the same but only with respect to forcings in $\Gamma$.
\begin{proposition}
The axiom $CD(\leq_W)$ is forceably necessary, that is once it has been forced to be true it will remain so in any further forcing extension. Thus in particular it is implied by the maximality principle, $MP$.
\label{FSFN}
\end{proposition}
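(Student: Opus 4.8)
The plan is to reduce the whole statement to a single absoluteness observation and then let the rest follow by bookkeeping. The key point is that membership of a \emph{ground-model} real in any node of the $\leq_W$-diagram is absolute to forcing extensions: if $V\subseteq V[G]$ is any set-forcing extension and $x\in(\omega^\omega)^V$, then for each node $N$ of the diagram one has $x\in N^V$ if and only if $x\in N^{V[G]}$. First I would verify this. Set forcing over $V$ leaves the inner model $W$ unchanged, so $(\omega^\omega)^W$ is the same in $V$ and $V[G]$; likewise $W[x]$ is defined by a transfinite recursion along $\mathrm{Ord}$ using the class $W$ and the parameter $x$, so $W[x]^V=W[x]^{V[G]}$ and in particular these have the same reals. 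Since the combinatorial relations $\leq^*,\neq^*,\in^*$ are arithmetic (hence absolute, the coding of slaloms by reals being arithmetic), and each node $\mathcal B_{\leq_W}(R)$, $\mathcal D_{\leq_W}(R)$ is defined by a formula of the shape $\exists y\in W[x]\cap\omega^\omega\ \forall z\in(\omega^\omega)^W\ \theta(y,z)$ with $\theta$ arithmetic, while $\omega^\omega\setminus(\omega^\omega)^W$ is even simpler, this membership is computed identically on both sides.

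Next I would observe that $CD(\leq_W)$ is a \emph{finite} conjunction of statements of the form ``$\exists x\,(x\in A\wedge x\notin B)$'', one for each pair of nodes $A,B$ not joined by a directed path in the diagram (the degenerate cases with $A=\emptyset$ or $B=\omega^\omega\setminus(\omega^\omega)^W$ having the same syntactic shape). So if $V\models CD(\leq_W)$, I would fix in $V$ a witness $x_{A,B}$ for each relevant pair. By the absoluteness observation, in any further extension $V[G]$ the same real $x_{A,B}$ still lies in $A$ and still lies outside $B$; as only finitely many pairs are involved, $V[G]\models CD(\leq_W)$. This establishes that $CD(\leq_W)$, once true, is preserved by all further set forcing.

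To finish, I would invoke Theorem \ref{ConFS}, which supplies a proper forcing $\mathbb P$ after which $CD(\leq_W)$ holds; since $\mathbb P$ is definable in $W$ it may be used over $V$ with the same effect, by the observation following the treatment of Sacks forcing. Combined with the previous paragraph this says exactly that $CD(\leq_W)$ is forceably necessary, and indeed forceably necessary \emph{via a proper forcing}. By the definition of the Maximality Principle \cite{Hamkins03} every forceably necessary statement is true, so $MP\Rightarrow CD(\leq_W)$; and since the witnessing forcing $\mathbb P$ is proper while ``remaining true'' was proved for arbitrary further forcing (a fortiori for proper forcing), also $MP_{\mathrm{proper}}\Rightarrow CD(\leq_W)$.

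The only step requiring genuine care is the absoluteness observation, and within it the point that ``$W[x]$'' really denotes the same class in $V$ and in $V[G]$ for \emph{every} real $x$, not merely for $W$-generic ones; once that is pinned down (together with the harmless check that each node's defining formula quantifies only existentially over the fixed set $W[x]\cap\omega^\omega$ and universally over the fixed set $(\omega^\omega)^W$), everything else is formal.
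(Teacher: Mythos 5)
Your proposal is correct and follows essentially the same route as the paper: the heart of the matter is that membership of a ground-model real in any node of the $\leq_W$-diagram is absolute to further set-forcing extensions (because $W$, $W[x]$, and the arithmetic relations $\leq^*,\neq^*,\in^*$ are all computed identically), so the finitely many witnesses to the separations persist, and the implication from $MP$ and $MP_{\mathrm{proper}}$ then follows from Theorem~\ref{ConFS}. Your writeup merely spells out the absoluteness claim and the syntactic form of $CD(\leq_W)$ in more detail than the paper does.
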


\begin{proof}
This is more or less immediate from the definition. Since $CD(\leq_W)$ is defined relative to a fixed inner model and the diagram for $W$ concerns only the models $W[x]$ for $x \in \omega^\omega \cap V$, notice that forcing over $V$ cannot change the theories of the models $W[x]$ for $x \in V$ hence if $CD(\leq_W)$ is true in $V$ it must remain so in any forcing extension. In other words absoluteness for membership in each of the various classes holds and this guarentees that forcing cannot change the relation $x \in A$ for any node $A$ of the diagram.

Since $CD(\leq_W)$ is forceably necessary it follows that $MP$ implies $CD(\leq_W)$.
\end{proof}

Now notice that since all the forcing notions used in Theorem \ref{ConFS} have size at most $2^{\aleph_0}$ it follows that the collapse forcing $Coll(\omega, < (2^{2^{\aleph_0}})^+)$ will add a generic making $CD(\leq_W)$ true. Since $CD(\leq_W)$ is forceably necessary it follows that the full collapse forcing cannot kill the generic once it is added and, as a result one obtains
\begin{corollary}
$W^{Coll(\omega, < (2^{2^{\aleph_0}})^+)} \models CD(\leq_W)$
\end{corollary}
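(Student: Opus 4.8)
The plan is to derive the corollary by combining Theorem~\ref{ConFS} with Proposition~\ref{FSFN} through the standard absorption property of the L\'evy collapse. Let $\mathbb P$ denote the proper forcing produced in Theorem~\ref{ConFS}, so that $W^{\mathbb P}\models CD(\leq_W)$. As recorded in the remarks just before the corollary, $|\mathbb P|\le 2^{\aleph_0}$, since each factor $\mathbb S^W$, $\mathbb L^W$, $\mathbb{LOC}^W$, $\mathbb B^W$ has at most continuum many conditions; hence the Boolean completion of $\mathbb P$ has size at most $2^{2^{\aleph_0}}$. Write $\mu=(2^{2^{\aleph_0}})^W$ and $\lambda=(\mu^+)^W$, so that the forcing in the statement is $Coll(\omega,<\lambda)$ as computed in $W$.

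First I would show that, over $W$, the L\'evy collapse factors (up to forcing equivalence) as
\[ Coll(\omega,<\lambda)\;\cong\;\mathbb P * \dot{Coll}(\omega,<\check\lambda). \]
This is the usual absorption argument for the L\'evy collapse: because $|\mathbb P|<\lambda$, the poset $\mathbb P$ is $\lambda$-c.c., so $\lambda$ remains a cardinal in $W^{\mathbb P}$ and every $W$-cardinal in $[\aleph_1,\lambda)$ has size $<\lambda$ there; consequently $\mathbb P*\dot{Coll}(\omega,<\check\lambda)$ is a separative forcing of size $\lambda$ collapsing every cardinal in $[\aleph_1^W,\lambda)$ to $\omega$ and turning $\lambda$ into $\omega_1$, and such a forcing is unique up to forcing equivalence, hence equivalent to $Coll(\omega,<\lambda)$. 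The subtlety to watch is that the ordinal being collapsed should be $\lambda$ as fixed in $W$, not the (possibly smaller) value of $2^{2^{\aleph_0}}$ recomputed in $W^{\mathbb P}$; this is precisely why the generous bound $2^{2^{\aleph_0}}$ -- which dominates the size of the Boolean completion of $\mathbb P$ -- is convenient rather than the bare $2^{\aleph_0}$.

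Given the factorization, $W^{Coll(\omega,<\lambda)}$ is a forcing extension of $W^{\mathbb P}$. By Theorem~\ref{ConFS} we have $W^{\mathbb P}\models CD(\leq_W)$, and by Proposition~\ref{FSFN} the statement $CD(\leq_W)$ is \emph{necessary} over $W^{\mathbb P}$, i.e.\ it continues to hold in every further forcing extension; applying this to the tail forcing $Coll(\omega,<\lambda)^{W^{\mathbb P}}$ gives $W^{Coll(\omega,<\lambda)}\models CD(\leq_W)$, as required.

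The only step that is not pure bookkeeping is the absorption lemma in the second paragraph; the one point that genuinely needs care there -- as flagged above -- is to keep $\lambda$ fixed as an ordinal of $W$ throughout, so that recomputing cardinal arithmetic inside $W^{\mathbb P}$ cannot interfere with the factorization.
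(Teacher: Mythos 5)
Your proof is correct and follows essentially the same route as the paper: absorb $\mathbb P$ into the L\'evy collapse (using that $\mathbb P$ has size at most continuum, hence well below $(2^{2^{\aleph_0}})^+$) so that the collapse extension contains a $\mathbb P$-generic over $W$, and then invoke the forceable necessity of $CD(\leq_W)$ from Proposition~\ref{FSFN} to see that the remaining collapse forcing cannot undo it. The paper states this in a single sentence before the corollary; your write-up merely makes the absorption step explicit.
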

Moreover, note that while the forcing described in Theorem \ref{ConFS} was proper and hence preserved $\omega_1$ the collapse forcing used above is not. Therefore the following is immediate.
\begin{corollary}
The statement \say{the reals of $W$ are countable} is independent of the theory ZFC($W$) + $CD(\leq_W)$.
\end{corollary}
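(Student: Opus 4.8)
The plan is to produce two models of ZFC($W$) $+$ $CD(\leq_W)$ --- one in which $(\omega^\omega)^W$ is uncountable and one in which it is countable; this establishes the claimed independence.

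For the first model I invoke Theorem \ref{ConFS}, which furnishes a proper forcing $\mathbb P \in W$ with $W^{\mathbb P} \models CD(\leq_W)$. Since $\mathbb P$ is proper, $\omega_1$ is preserved, i.e. $\aleph_1^W = \aleph_1^{W^{\mathbb P}}$. Inside $W$ there is an injection $j \colon \aleph_1^W \hookrightarrow (\omega^\omega)^W$ (because $|(\omega^\omega)^W| = (2^{\aleph_0})^W \geq \aleph_1^W$), and $j$, being a set of $W$, is still such an injection in $W^{\mathbb P}$; as $\aleph_1^W$ remains $\aleph_1$ there, $(\omega^\omega)^W$ is uncountable in $W^{\mathbb P}$. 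Hence $W^{\mathbb P} \models$ ZFC($W$) $+$ $CD(\leq_W)$ $+$ $\neg$\,\say{the reals of $W$ are countable}.

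For the second model I use the corollary immediately preceding the present one, namely $W^{Coll(\omega, < (2^{2^{\aleph_0}})^+)} \models CD(\leq_W)$. Writing $\kappa = ((2^{2^{\aleph_0}})^+)^W$, the L\'evy collapse $Coll(\omega, <\kappa)^W$ turns every $W$-cardinal in the open interval $(\omega, \kappa)$ into a countable ordinal; in particular it collapses $(2^{\aleph_0})^W$, which is such a cardinal, so the set $(\omega^\omega)^W$, of size $(2^{\aleph_0})^W$ in $W$, is countable in $W^{Coll(\omega,<\kappa)}$. Together with the validity of $CD(\leq_W)$ there, this yields a model of ZFC($W$) $+$ $CD(\leq_W)$ $+$ \say{the reals of $W$ are countable}, completing the independence.

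I do not expect a genuine obstacle: both models are essentially already in hand --- the first from Theorem \ref{ConFS}, the second from the preceding corollary, which in turn rests on Proposition \ref{FSFN} (that $CD(\leq_W)$ is forceably necessary, hence undisturbed by the additional collapsing). The only points that need recording are the two elementary facts used above: an $\omega_1$-preserving forcing cannot collapse the uncountable set $(\omega^\omega)^W$, whereas collapsing $(2^{\aleph_0})^W$ to $\omega$ manifestly makes $(\omega^\omega)^W$ countable.
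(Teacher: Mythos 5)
Your proposal is correct and is exactly the argument the paper intends: the corollary is derived there from the observation that the forcing of Theorem \ref{ConFS} is proper (hence preserves $\omega_1$ and the uncountability of $(\omega^\omega)^W$), while the collapse forcing of the preceding corollary makes $(\omega^\omega)^W$ countable, with $CD(\leq_W)$ holding in both extensions. Your write-up just spells out the two elementary cardinality facts the paper leaves implicit.
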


Since $CD(\leq_W)$ is forceably necessary and hence cannot be killed once it is forced to be true it follows that any sentence which can be forced to be true from any model must be consistent with $CD(\leq_W)$. Such examples include $CH$, $2^{\aleph_0} = \kappa$ for any $\kappa$ of uncountable cofinality, Martin's Axiom and its negation, $\diamondsuit$ and its negation, and a wide variety of forcings associated with the classical Cicho\'n's diagram. In particular, $CD(\leq_W)$ is independent of any consistent assignment of cardinals to the nodes in the Cicho\'n diagram (cf \cite{BarJu95} for a variety of examples of such).

Let me finish now by showing the consistency of a strong version of $CD(\leq_W)$, which was suggested to me by Gunter Fuchs. The idea is to iteratively force with the forcing $\mathbb P$ of Theorem \ref{ConFS} for long enough that a large collection of inner models $W$ simultaneously satisfy $CD(\leq_W)$.
\begin{theorem}
Assume $V=L$. Then there is an $\aleph_2$-c.c. proper forcing extension where $2^{\aleph_0} = \aleph_2$ and for every $\aleph_1$-sized set of reals $A$ there is a set of reals $B \supseteq A$ of size $\aleph_1$ so that $CD(\leq_W)$ holds for $W = L[B]$.
\label{ConIFS}
\end{theorem}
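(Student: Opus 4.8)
The plan is to iterate, over $L$, the forcing $\mathbb P = \mathbb S \times \mathbb L \times \mathbb{LOC} \times \mathbb B$ of Theorem \ref{ConFS} with countable support for $\omega_2$ steps, at each stage $\alpha$ forcing with $\mathbb P$ \emph{as computed in the current model} $L[G_\alpha]$, and to use a bookkeeping function to interleave this with an enumeration of all (names for) $\aleph_1$-sized sets of reals that appear along the way. Write $\langle \mathbb P_\alpha, \dot{\mathbb Q}_\alpha : \alpha \le \omega_2\rangle$ for the iteration, let $G$ be $\mathbb P_{\omega_2}$-generic over $L$, and set $V' = L[G]$. The key point is that by Theorem \ref{ConFS}, at every stage $\alpha$ the next model $L[G_{\alpha+1}]$ is exactly $W^{\mathbb P^W}$ for $W = L[G_\alpha]$, so $L[G_{\alpha+1}] \models CD(\leq_{L[G_\alpha]})$; and by Proposition \ref{FSFN} the statement $CD(\leq_W)$ is forceably necessary, so once true it remains true in every further extension, whence $V' \models CD(\leq_{L[G_\alpha]})$ for every $\alpha < \omega_2$.

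Since $V = L$, GCH holds, so the standard theory of countable support iterations of proper forcings of size $\le \aleph_1$ applies. Each iterand $\mathbb P^{L[G_\alpha]}$ is proper: $\mathbb S\times\mathbb L$ satisfies Axiom A by Lemma \ref{sackslaver} (applied in $L[G_\alpha]$), while $\mathbb{LOC}$ and $\mathbb B$ are $\sigma$-linked, hence their product with the Axiom A forcing is proper; and a countable support iteration of proper forcings is proper. Likewise $\mathbb P_{\omega_2}$ is $\aleph_2$-c.c., CH holds in $L[G_\alpha]$ for every $\alpha < \omega_2$, $\omega_1$ is preserved throughout, and since $\mathbb P$ adds reals we get $V' \models 2^{\aleph_0} = \aleph_2$. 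Using CH at stage $\alpha$ together with a routine coding of the generic $G_\alpha$ (which lives on a poset of size $\le \aleph_1$) by a set of $\aleph_1$ reals, for any $\aleph_1$-sized set of reals $A \in L[G_\alpha]$ one may fix a set of $\aleph_1$ reals $B_\alpha^A \supseteq A$ with $L[B_\alpha^A] = L[G_\alpha]$; the bookkeeping guarantees that every such $A$ is handed to us at some stage.

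To finish, fix any $\aleph_1$-sized set of reals $A$ in $V'$. Coding $A$ as a subset of $\omega_1$ and invoking the $\aleph_2$-c.c. together with $\mathrm{cf}(\omega_2) > \aleph_1$, a nice name for $A$ uses conditions whose supports are bounded below $\omega_2$, so $A \in L[G_{\alpha_0}]$ for some $\alpha_0 < \omega_2$. By the bookkeeping there is a stage $\alpha \ge \alpha_0$ at which $A$ is considered; put $B := B_\alpha^A$. Then $B \supseteq A$, $|B| = \aleph_1$, and $L[B] = L[G_\alpha]$, so $CD(\leq_{L[B]})$ holds in $L[G_{\alpha+1}]$ and hence (by the forceable necessity just noted) in $V'$ — which is exactly the conclusion of the theorem for $W = L[B]$.

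The main work, and the point that needs care, is the iteration bookkeeping: checking that the countable support iteration of the (non-ccc) forcing $\mathbb P$ stays proper and $\aleph_2$-c.c.\ and keeps CH at all intermediate stages, and verifying the reflection step that every $\aleph_1$-sized set of reals in $V'$ genuinely appears in some $L[G_\alpha]$. A marginally delicate sub-point is the coding producing $B \supseteq A$ with $L[B] = L[G_\alpha]$; alternatively one can arrange the bookkeeping so that $\dot{\mathbb Q}_\alpha$ is $\mathbb P$ computed in $L[A_\alpha]$ itself and simply take $B = A_\alpha$, provided one checks that $\mathbb P^{L[A_\alpha]}$ remains proper over $L[G_\alpha]$ (the $\sigma$-linked factors are unproblematic, and the Sacks$\times$Laver factor reduces to the fusion argument of Lemma \ref{sackslaver}).
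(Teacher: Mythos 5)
Your proposal is correct and follows essentially the same route as the paper: a countable support $\omega_2$-iteration of $\mathbb P$ over $L$, reflection of any $\aleph_1$-sized set of reals $A$ into an intermediate model $L[G_\alpha]$, coding $G_\alpha$ by a set of reals $B \supseteq A$ with $L[B] = L[G_\alpha]$, and forceable necessity of $CD(\leq_{L[B]})$ to push the conclusion to the final model. The only difference is your bookkeeping device, which the paper omits as unnecessary, since every stage $\alpha+1$ already forces $CD(\leq_{L[G_\alpha]})$ regardless of which sets are "handed to us."
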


\begin{proof}
Assume $V=L$ and let $\vec{\mathbb P} = \langle (\mathbb P_\alpha, \dot{\mathbb Q}_\alpha) \;  | \; \alpha < \omega_2 \rangle$ be an $\omega_2$-length countable support iteration of copies of the forcing $\mathbb P$ from Theorem \ref{ConFS} (i.e. $\dot{\mathbb Q}_{\alpha +1}$  evaluates to $( \mathbb P)^{L^{\mathbb P_\alpha}}$). Clearly $\vec{\mathbb P}$ is proper. Moreover, since CH holds in the ground model and the forcing $\mathbb P$ is easily seen to be of size continuum, and does not kill CH it follows that $\vec{\mathbb P}$ has the $\aleph_2$-c.c. and every intermediate stage in the iteration preserves CH: $L^{\mathbb P_\alpha} \models {\rm CH}$ for all $\alpha < \omega_2$. However, since reals are added at every stage the final model satisfies $2^{\aleph_0} = \aleph_2$.

It remains to show that for every $\aleph_1$-sized set of reals $A$ there is a set of reals $B \supseteq A$ of size $\aleph_1$ so that $CD(\leq_W)$ holds for $W = L[B]$. Let $A$ be a set of reals of size $\aleph_1$. Then, there is some $\alpha$ so that $A \in L[G_\alpha]$ for $G_\alpha$ be $\mathbb P_\alpha$-generic. Note that we can code $G_\alpha$ by a set of reals of size at most $\aleph_1$, say $B$, and without loss we can assume that $A \subseteq B$ for $L[G_\alpha] = L[B]$. Then at stage $\mathbb P_{\alpha+1}$ we added a generic witnessing that $CD(\leq_{L[B]})$ holds. Moreover, by the fact that this statement is forceably necessary, it cannot be killed by the tail end of the iteration so it holds in the final model.
\end{proof}

While it is not entirely clear what consequences we can expect from $CD(\leq_W)$ for an arbitrary $W$, the stronger version obtained in Theorem \ref{ConIFS} has several low hanging fruits in this regard. Let me pluck a particularly simple one connecting the constructibility diagram to the standard Cicho\'n diagram. 
\begin{lemma}
Assume for every $\aleph_1$-sized set of reals $A$ there is a set of reals $B \supseteq A$ of size $\aleph_1$ so that $CD(\leq_W)$ holds for $W = L[B]$. Then all the cardinals in the Cicho\'n diagram have size at least $\aleph_2$.
\end{lemma}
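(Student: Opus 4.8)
The plan is to reduce the whole statement to the single inequality ${\rm add}(\mathcal N)\geq\aleph_2$, and then to derive that from the hypothesis by a one‑line diagonal argument against the node $\mathcal B_{\leq_W}(\in^*)$.

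First I would recall that in the Cicho\'n diagram ${\rm add}(\mathcal N)$ sits below every other cardinal characteristic: every arrow of the diagram (after $\aleph_1$) emanates from it, so every node is $\geq{\rm add}(\mathcal N)$ (see \cite[Ch.~2]{BarJu95}). Hence it suffices to prove ${\rm add}(\mathcal N)>\aleph_1$. Here I would invoke Bartoszy\'nski's localisation characterisation (\cite[Thm.~2.3.9]{BarJu95}): ${\rm add}(\mathcal N)$ is the least size of a family $F\subseteq\omega^\omega$ admitting no single slalom $\sigma$ with $|\sigma(n)|\le n$ that eventually captures ($\in^*$) every member of $F$. Equivalently, ${\rm add}(\mathcal N)>\aleph_1$ is exactly the statement that every $\aleph_1$‑sized family of reals is eventually captured by one slalom. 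Note that the width bound $|\sigma(n)|\le n$ and the ``eventually captures'' relation here match precisely the conventions built into the definition of $\mathcal B(\in^*)$ recalled at the start of Section~3.

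With this in hand the argument is immediate. Let $F\subseteq\omega^\omega$ be an arbitrary family of reals of size $\aleph_1$. Applying the hypothesis to $A=F$, fix $B\supseteq F$ of size $\aleph_1$ with $CD(\leq_W)$ holding for $W=L[B]$. Since $F\subseteq B\in W$, every member of $F$ is a real of $W$. Because $CD(\leq_W)$ asserts that all nodes of the $\leq_W$‑diagram are distinct, in particular $\emptyset\subsetneq\mathcal B_{\leq_W}(\in^*)$, so by the description of that node there are a real $x$ and a slalom $\sigma\in W[x]$ eventually capturing every real of $W$. As $W[x]\subseteq V$, $\sigma$ is a genuine slalom of $V$, and it eventually captures every $f\in F$. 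Since $F$ was arbitrary, every $\aleph_1$‑sized family of reals is captured by a single slalom, so ${\rm add}(\mathcal N)\geq\aleph_2$, whence every cardinal in the Cicho\'n diagram is $\geq\aleph_2$.

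There is no genuine obstacle; the only care needed is bookkeeping: matching Bartoszy\'nski's characterisation of ${\rm add}(\mathcal N)$ to the exact slalom conventions used for $\mathcal B(\in^*)$ in this paper, and observing that $W[x]\subseteq V$ so that a slalom coded by a real of $W[x]$ is a slalom of $V$ capturing, among all reals of $W$, those of $F$. As a sanity check the same scheme yields the individual corner cases: if $\mathfrak b=\aleph_1$, take $F$ unbounded of size $\aleph_1$, absorb it into $B$ with $CD(\leq_{L[B]})$, and note $\mathcal B_{\leq_{L[B]}}(\leq^*)\neq\emptyset$ then gives a real dominating all of $F$, a contradiction; similarly for the other combinatorial nodes. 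But the single inequality ${\rm add}(\mathcal N)\geq\aleph_2$ already subsumes all of them, so routing through $\mathcal B(\in^*)$ is the cleanest path.
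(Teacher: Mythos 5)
Your proposal is correct and follows essentially the same route as the paper: both reduce the claim to ${\rm add}(\mathcal N)\geq\aleph_2$ via Bartoszy\'nski's slalom characterization of ${\rm add}(\mathcal N)$, absorb an arbitrary $\aleph_1$-sized family into a set $B$ with $CD(\leq_{L[B]})$, and extract a single capturing slalom from the nonemptiness of $\mathcal B_{\leq_{L[B]}}(\in^*)$. Your write-up merely makes explicit a few steps the paper leaves implicit (that $CD(\leq_W)$ forces $\mathcal B(\in^*)\neq\emptyset$ and that the slalom lives in $V$), which is fine.
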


\begin{proof}
It suffices to show that ${\rm add}(\mathcal N) \geq \aleph_2$. Towards this goal, recall Bartoszy\'nski's characterization of ${\rm add}(\mathcal N)$ as the least cardinal $\kappa$ so that there is a set of reals $X$ of size $\kappa$ so that no single slalom can capture all the reals in $X$ (\cite[Theorem 5.14]{BlassHB}). The result is then immediate for, given any set of reals $A$ of size $\aleph_1$, we can find a set $B \supseteq A$ of size $\aleph_1$ and a slalom $\sigma$ eventually capturing all reals in $L[B]$ by $CD(\leq_{L[B]})$ so ${\rm add}(\mathcal N) > \aleph_1$.
\end{proof}



\section{Open Questions}
I finish by collecting the open questions that have appeared throughout this paper. First I ask about the Cicho\'n diagram for other reduction concepts. Recall that in the case of $\leq_T$, the sets $\mathcal B(\in^*)$ and $\mathcal B(\leq^*)$ were equal.
\begin{question}
For which reductions $(\sqsubseteq, x_0)$ on the reals is $\mathcal B_\sqsubseteq(\in^*) \subsetneq \mathcal B_\sqsubseteq(\leq^*)$?
\end{question}
The anonymous referee has pointed out to me that Monin (unpublished) has shown that for hyperarithmetic reductions $\mathcal B_\sqsubseteq(\in^*) = \mathcal B_\sqsubseteq(\leq^*)$. See \cite[Fact 2.6]{Kihara17}. This question has also been considered in \cite{Kihara17}, see Problem 5.7 and the discussion preceeding it. This shows that for many natural reduction concepts the answer to the question above is negative. Taking this into account it seems reasonable to ask if indeed {\em any} ``reasonable" reduction concept (whatever that means) provably does this in $\mathsf{ZFC}$? Note that if $V=L$ all $\leq_W$ relations are trivial.

Next I ask about the ZFC($W$)-provable relations between the nodes of the $\leq_W$-Cicho\'n diagram. While I have shown that there are no other implications it is entirely possible that there are other relations more generally.
\begin{question}
What other ZFC($W$)-provable relations are there between the sets in?
\end{question}

My next collection of questions concerns the subforcings of $\mathbb{LOC}$, a topic that deserves more study.
\begin{question}
What is the forcing adding the eventually different real described in Lemma \ref{evdiffloc}? Does it add a dominating real? Note that it must be ccc, in fact $\sigma$-linked and add eventually different reals which are bounded by nearly all ground model reals.
\end{question}

Similarly, one might ask whether there is a similarly exotic subforcing of $\mathbb{LOC}$ for adding a dominating real.
\begin{question}
Does every subforcing of $\mathbb{LOC}$ adding a dominating real add a $\mathbb D$-generic real?
\end{question}
\begin{question}
Does every subforcing of $\mathbb{LOC}$ add a Cohen real or a random real?
\end{question}

Finally I conclude with some questions about the axiom $CD(\leq_W)$.
\begin{question}
What statements are implied by $CD(\leq_W)$? In particular, does it imply that there are $W$-generics for the forcings to add reals we have discussed (Cohen, random, etc)?
\end{question}
\begin{question}
How does $CD(\leq_W)$ relate to standard forcing axioms? In particular does ${\rm MA}_{\aleph_1}$ imply $CD(\leq_{L[A]})$ for all $\aleph_1$-sized sets of reals $A$? Does BPFA?
\end{question}

\bibliography{Logicpaperrefs}
\bibliographystyle{plain}

\end{document}